\providecommand{\tabularnewline}{\\}
\numberwithin{equation}{section}
\theoremstyle{plain}
\newtheorem{thm}{\protect\theoremname}[section]
  \theoremstyle{definition}
  \newtheorem{defn}[thm]{\protect\definitionname}
  \theoremstyle{remark}
  \newtheorem{rem}[thm]{\protect\remarkname}
  \theoremstyle{definition}
  \newtheorem{example}[thm]{\protect\examplename}
  \theoremstyle{plain}
  \newtheorem{prop}[thm]{\protect\propositionname}
  \theoremstyle{plain}
  \newtheorem{lem}[thm]{\protect\lemmaname}
  \theoremstyle{plain}
  \newtheorem{cor}[thm]{\protect\corollaryname}
\newcommand{\arc}{\,\tikz[baseline=\the\dimexpr\fontdimen22\textfont2\relax ]{\draw[->] plot [smooth] coordinates {(0,0)(0.05,0.2)(0,0.4)}; }\, }
\newcommand{\arcd}{\,\tikz[baseline=\the\dimexpr\fontdimen22\textfont2\relax ]{\draw[<-] plot [smooth] coordinates {(0,0)(0.05,0.2)(0,0.4)}; }\, }
\newcommand{\arcs}{\,\tikz[baseline=\the\dimexpr\fontdimen22\textfont2\relax ]{\draw[->] plot [smooth] coordinates {(0,0)(0.05,0.2)(0,0.4)};\draw[->] plot [smooth] coordinates {(0.4,0)(0.35,0.2)(0.4,0.4)}; }\, }
\newcommand{\arcsd}{\,\tikz[baseline=\the\dimexpr\fontdimen22\textfont2\relax ]{\draw[<-] plot [smooth] coordinates {(0,0)(0.05,0.2)(0,0.4)};\draw[<-] plot [smooth] coordinates {(0.4,0)(0.35,0.2)(0.4,0.4)}; }\, }
\newcommand{\arcsud}{\,\tikz[baseline=\the\dimexpr\fontdimen22\textfont2\relax ]{\draw[->] plot [smooth] coordinates {(0,0)(0.05,0.2)(0,0.4)};\draw[<-] plot [smooth] coordinates {(0.4,0)(0.35,0.2)(0.4,0.4)}; }\, }
\newcommand{\arcsacross}{\,\tikz[baseline=\the\dimexpr\fontdimen22\textfont2\relax ]{\draw[->] plot [smooth] coordinates {(0,0)(0.2,0.05)(0.4,0)};\draw[->] plot [smooth] coordinates {(0.4,0.4)(0.2,0.35)(0,0.4)}; }\, }
\newcommand{\wideedge}{\,\tikz[baseline=\the\dimexpr\fontdimen22\textfont2\relax ]
{\draw[->] (0,0)--(0.4,0.4); 
\draw[->] (0.4,0)--(0,0.4); 
\draw[fill=black] (0.2,0.2) circle (0.075cm);
}\,}
\newcommand{\wideedged}{\,\tikz[baseline=\the\dimexpr\fontdimen22\textfont2\relax ]
{\draw[<-] (0,0)--(0.4,0.4); 
\draw[<-] (0.4,0)--(0,0.4); 
\draw[fill=black] (0.2,0.2) circle (0.075cm);
}\,}
\newcommand{\vir}{\,\tikz[baseline=\the\dimexpr\fontdimen22\textfont2\relax ]{\draw[->] (0,0)--(0.4,0.4); \draw[->] (0.4,0)--(0,0.4); \draw (0.2,0.2) circle (0.075cm);}\,}
\newcommand{\vird}{\,\tikz[baseline=\the\dimexpr\fontdimen22\textfont2\relax ]{\draw[<-] (0,0)--(0.4,0.4); \draw[<-] (0.4,0)--(0,0.4); \draw (0.2,0.2) circle (0.075cm);}\,}
\newcommand{\posx}{\,\tikz[baseline=\the\dimexpr\fontdimen22\textfont2\relax]{
\draw[->] (0.4,0)--(0,0.4);
\draw[ultra thick, white] (0,0)--(0.4,0.4);
\draw[ultra thick, white] (0.05,0)--(0.45,0.4);
\draw[ultra thick, white] (-0.05,0)--(0.35,0.4);
\draw[->] (0,0)--(0.4,0.4);
}\,}
\newcommand{\negx}{\,\tikz[baseline=\the\dimexpr\fontdimen22\textfont2\relax]{
\draw[->] (0,0)--(0.4,0.4);
\draw[ultra thick, white] (0.4,0)--(0,0.4);
\draw[ultra thick, white] (0.4,0.05)--(0.05,0.4);
\draw[ultra thick, white] (0.4,0-0.05)--(-0.05,0.4);
\draw[->] (0.4,0)--(0,0.4);
}\,}
\newcommand{\arcarc}{\,\tikz[baseline=\the\dimexpr\fontdimen22\textfont2\relax]{
\draw[->] (0,0)--(0,0.4);
\draw (0.4,0.2) circle (.2cm);
\draw[->] (0.39,0.4)--(0.41,0.4);
\draw[<-] (0.8,0)--(0.8,0.4);
}\,}
\newcommand{\circleO}{\,\tikz[baseline=\the\dimexpr\fontdimen22\textfont2\relax]{
\draw (0.4,0.2) circle (.2cm);
\draw[->] (0.39,0.4)--(0.41,0.4);
}\,}
\newcommand{\arcvir}{\,\tikz[baseline=\the\dimexpr\fontdimen22\textfont2\relax]{
\draw[->] (0,0)--(0,0.4);
\draw plot [smooth] coordinates {(0.4,0.4)(0.3,0.2)(0.4,0)};
\draw[<-] (0.4,0)--(0.8,0.4); 
\draw[<-] (0.8,0)--(0.4,0.4); 
\draw (0.6,0.2) circle (0.075cm);
}\,}
\newcommand{\virarc}{\,\tikz[baseline=\the\dimexpr\fontdimen22\textfont2\relax]{
\draw[<-] (0.8,0)--(0.8,0.4);
\draw plot [smooth] coordinates {(0.4,0.4)(0.5,0.2)(0.4,0)};
\draw[->] (0.0,0)--(0.4,0.4); 
\draw[->] (0.4,0)--(0.0,0.4); 
\draw (0.2,0.2) circle (0.075cm);
}\,}
\newcommand{\virvir}{\,\tikz[baseline=\the\dimexpr\fontdimen22\textfont2\relax]{
\draw (0.0,0)--(0.4,0.4); 
\draw[->] (0.4,0)--(0.0,0.4); 
\draw (0.2,0.2) circle (0.075cm);
\draw (0.4,0)--(0.8,0.4); 
\draw[<-] (0.8,0)--(0.4,0.4); 
\draw (0.6,0.2) circle (0.075cm);
}\,}
\newcommand{\arcedge}{\,\tikz[baseline=\the\dimexpr\fontdimen22\textfont2\relax]{
\draw[->] (0,0)--(0,0.4);
\draw plot [smooth] coordinates {(0.4,0.4)(0.3,0.2)(0.4,0)};
\draw[<-] (0.4,0)--(0.8,0.4); 
\draw[<-] (0.8,0)--(0.4,0.4); 
\draw[fill=black] (0.6,0.2) circle (0.075cm);
}\,}
\newcommand{\edgearc}{\,\tikz[baseline=\the\dimexpr\fontdimen22\textfont2\relax]{
\draw[<-] (0.8,0)--(0.8,0.4);
\draw plot [smooth] coordinates {(0.4,0.4)(0.5,0.2)(0.4,0)};
\draw[->] (0.0,0)--(0.4,0.4); 
\draw[->] (0.4,0)--(0.0,0.4); 
\draw[fill=black] (0.2,0.2) circle (0.075cm);
}\,}
\newcommand{\edgeedge}{\,\tikz[baseline=\the\dimexpr\fontdimen22\textfont2\relax]{
\draw[->] (0.0,0)--(0.4,0.4); 
\draw[->] (0.4,0)--(0.0,0.4); 
\draw[fill=black] (0.2,0.2) circle (0.075cm);
\draw[<-] (0.4,0)--(0.8,0.4); 
\draw[<-] (0.8,0)--(0.4,0.4); 
\draw[fill=black] (0.6,0.2) circle (0.075cm);
}\,}
\newcommand{\viredge}{\,\tikz[baseline=\the\dimexpr\fontdimen22\textfont2\relax]{
\draw[->] (0.0,0)--(0.4,0.4); 
\draw[->] (0.4,0)--(0.0,0.4); 
\draw (0.2,0.2) circle (0.075cm);
\draw[<-] (0.4,0)--(0.8,0.4); 
\draw[<-] (0.8,0)--(0.4,0.4); 
\draw[fill=black] (0.6,0.2) circle (0.075cm);
}\,}
\newcommand{\virnegxd}{\,\tikz[baseline=\the\dimexpr\fontdimen22\textfont2\relax]{
\draw (0.0,0)--(0.4,0.4); 
\draw[->] (0.4,0)--(0.0,0.4); 
\draw (0.2,0.2) circle (0.075cm);
\draw(0.4,0)--(0.8,0.4);
\draw[ultra thick, white] (0.8,0)--(0.5,0.3);
\draw[<-] (0.8,0)--(0.4,0.4);
}\,}
\newcommand{\posxnegxd}{\,\tikz[baseline=\the\dimexpr\fontdimen22\textfont2\relax]{
\draw[->] (0.4,0)--(0,0.4);
\draw[ultra thick, white] (0,0)--(0.4,0.4);
\draw[ultra thick, white] (0.05,0)--(0.45,0.4);
\draw[ultra thick, white] (-0.05,0)--(0.35,0.4);
\draw (0,0)--(0.4,0.4);
\draw (0.4,0)--(0.8,0.4);
\draw[ultra thick, white] (0.8,0)--(0.5,0.3);
\draw[<-] (0.8,0)--(0.4,0.4);
}\,}
\newcommand{\negxposxd}{\,\tikz[baseline=\the\dimexpr\fontdimen22\textfont2\relax]{
\draw[<-] (0.8,0)--(0.4,0.4);
\draw[ultra thick, white] (0.4,0)--(0.8,0.4);
\draw[ultra thick, white] (0.45,0)--(0.85,0.4);
\draw[ultra thick, white] (0.35,0)--(0.75,0.4);
\draw(0.4,0)--(0.8,0.4);
\draw (0,0)--(0.4,0.4);
\draw[ultra thick, white] (0.4,0)--(0.1,0.3);
\draw[->] (0.4,0)--(0.0,0.4);
}\,}
\newcommand{\xx}{\mathbf{x}}
\newcommand{\yy}{\mathbf{y}}
\newcommand{\zz}{\mathbf{z}}
\newcommand{\pp}{\mathbf{p}}
\newcommand{\PP}{\mathcal{P}}
\newcommand{\QQ}{\mathbb{Q}}
\newcommand{\ZZ}{\mathbb{Z}}
\newcommand{\HH}{\mathcal{H}}
\newcommand{\Cone}{\text{Cone}}
\renewcommand{\G}{\Gamma}
\newcommand{\Br}{\text{Br}}
  \providecommand{\corollaryname}{Corollary}
  \providecommand{\definitionname}{Definition}
  \providecommand{\examplename}{Example}
  \providecommand{\lemmaname}{Lemma}
  \providecommand{\propositionname}{Proposition}
  \providecommand{\remarkname}{Remark}
\providecommand{\theoremname}{Theorem}
\begin{document}

\title{HOMFLY-PT homology for general link diagrams and braidlike isotopy}

\author{Michael Abel}

\date{\today}

\email{maabel@math.duke.edu}

\address{Department of Mathematics, Duke University, Durham, NC 27708 }
\begin{abstract}
Khovanov and Rozansky's categorification of the HOMFLY-PT polynomial
is invariant under braidlike isotopies for any link diagram and Markov
moves for braid closures. To define HOMFLY-PT homology, they required
a link to be presented as a braid closure, because they did not prove
invariance under the other oriented Reidemeister moves. In this text
we prove that the Reidemeister IIb move fails in HOMFLY-PT homology
by using virtual crossing filtrations of the author and Rozansky.
The decategorification of HOMFLY-PT homology for general link diagrams
gives a deformed version of the HOMFLY-PT polynomial, $P^{b}(D)$,
which can be used to detect nonbraidlike isotopies. Finally, we will
use $P^{b}(D)$ to prove that HOMFLY-PT homology is not an invariant
of virtual links, even when virtual links are presented as virtual
braid closures.
\end{abstract}

\maketitle
\tableofcontents

\section{Introduction}

Khovanov and Rozansky in \cite{KR08b} introduced a triply-graded
link homology theory categorifying the HOMFLY-PT polynomial. The construction
given in \cite{KR08b} of Khovanov-Rozansky HOMFLY-PT homology, or
briefly HOMFLY-PT homology, is an invariant of link diagrams up to
braidlike isotopy (isotopies which locally resemble isotopies of a
braid) and Markov moves for closed link diagrams. However, Khovanov
and Rozansky were not able to prove invariance under all oriented
Reidemeiester moves. In particular, they could not prove the Reidemeister
IIb move, and in fact expected that it would fail in general. Because
of this, they required that a link be presented as a braid closure
so that HOMFLY-PT homology would be an invariant of links. In this
text we will directly address this issue by proving the failure of
the Reidemeister IIb move in HOMFLY-PT homology, and explore the consequences
of this failure.

The framework of HOMFLY-PT homology can be extended to include the
use of ``virtual crossings'', degree 4 vertices which are not actually
positive or negative crossings. The author and Rozansky in \cite{AbRoz14}
proved that a filration can be placed on the chain complex whose homology
is HOMFLY-PT homology. The associated graded complex of this filtration
is described using diagrams containing only virtual crossings. The
filtration allows us to rewrite the chain complexes in an illuminating
manner, allowing us to see new isomorphisms which would be difficult
to see otherwise. Using the framework of virtual crossing filtrations
we prove the following theorem.
\begin{thm}
[See Theorem \ref{thm:CatRIIb}] Suppose $D_{1}$, $D_{2}$, and
$D_{3}$ are oriented link diagrams which are identical except in
the neighborhood of a single point. Suppose in the neighborhood of
that single point, $D_{1}$ is $\posxnegxd$, $D_{2}$ is $\arcsacross$,
and $D_{3}$ is $\virnegxd$. $\HH(D_{1})\simeq\HH(D_{3})$ up to
a grading shift, while $\HH(D_{1})\not\simeq\HH(D_{2})$ in general.
\end{thm}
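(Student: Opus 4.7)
The plan is to leverage the virtual crossing filtration of \cite{AbRoz14} for both halves of the statement. In that framework, every classical crossing sits in a twisted complex built from a virtual crossing and a wide edge, and this decomposition interacts well with composition against adjacent tangles. The two halves split along the usual categorification/decategorification line: the isomorphism is an internal manipulation of Rouquier-type complexes, whereas the non-isomorphism is detected by an explicit invariant.

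For the isomorphism $\HH(D_{1})\simeq\HH(D_{3})$, I would resolve the positive crossing in $D_{1}$ using its virtual decomposition, writing $\HH(D_{1})$ as a mapping cone $\Cone(A\to\HH(D_{3}))$ in which one term is the complex for $D_{3}=\virnegxd$ and the other, $A$, is a wide edge composed with the adjacent negative crossing of non-braidlike orientation. The key local observation is that a wide edge meeting a crossing with this orientation pattern admits a canonical nullhomotopic subcomplex; after Gaussian elimination this collapses $A$ in the homotopy category, and the mapping cone reduces to $\HH(D_{3})$ up to an overall homological and quantum grading shift inherited from the Rouquier twist on the positive crossing.

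For the non-isomorphism $\HH(D_{1})\not\simeq\HH(D_{2})$, I would exhibit an explicit closed diagram that witnesses the failure of the Reidemeister IIb move. A natural candidate is the two-component unlink, presented once with $D_{2}=\arcsacross$ inserted locally and once with the RIIb-deformed $D_{1}=\posxnegxd$ inserted in the same position. The homology of the two-component unlink is well known, so it suffices to compute the homology of the $D_{1}$-version via Rouquier complexes and targeted Gaussian eliminations, and then to exhibit generators that cannot be matched by any grading shift. The cleanest bookkeeping is via the decategorified polynomial $P^{b}(D)$ introduced in the paper, whose role is precisely to detect nonbraidlike isotopies at the level of Euler characteristics; showing $P^{b}(D_{1})\neq P^{b}(D_{2})$ on a small example immediately rules out an isomorphism between their categorifications.

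The main obstacle lies in the non-isomorphism direction: one must rule out all conceivable grading renormalizations, so the chosen invariant has to be fine enough to survive them while still being computable on an explicit small example. The isomorphism, by contrast, reduces to a local contraction of a wide edge against a crossing once the virtual decomposition is in place; identifying the correct nullhomotopy that triggers the Gaussian elimination is the main technical input for that half.
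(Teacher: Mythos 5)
Your route to the isomorphism $\HH(D_{1})\simeq\HH(D_{3})$ has a genuine gap, and it sits exactly at the step you call the ``key local observation.'' If you resolve only the positive crossing, you get $C(\posxnegxd)=\Cone\bigl(C(S)\xrightarrow{\chi_{i}\otimes1}tq^{-2}C(W)\bigr)$, where $S$ is the oriented smoothing glued to the negative crossing and $W$ is the wide edge glued to the negative crossing; the complex of $D_{3}$ is not a term of this cone at all until you further decompose the wide edge by a virtual saddle, and the piece you call $A$ (the wide edge composed with the adjacent negative crossing) is \emph{not} contractible: resolving its negative crossing and applying (MOY I) and (MOY IIb) shows its graded Euler characteristic is nonzero, so no Gaussian elimination can make it vanish, and the claimed reduction of the cone to $\HH(D_{3})$ does not follow. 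The nullhomotopy that actually powers the collapse is not a local feature of ``a wide edge meeting a crossing of this orientation''; it is the fact that the virtual saddle becomes nullhomotopic under \emph{partial braid closure} ($\hat{G}\simeq0$, Lemma \ref{lem:MultCoMult}), and that closure configuration only appears after \emph{both} crossings are resolved, inside $C(\arcedge)$ and $C(\edgeedge)$. This is the content of Lemma \ref{lem:CatMOYIIb}, which gives not just the splitting $C(\edgeedge)\simeq C(\viredge)\oplus\tilde{C}(\arcsud)$ but the identity components of the inclusion and projection; only with those in hand can one Gaussian-eliminate in the bicomplex $t^{-1}C(\arcedge)\to C(\arcarc)\oplus q^{-2}C(\edgeedge)\to tq^{-2}C(\edgearc)$ (together with the categorified MOY I from Proposition \ref{prop:CatMOYMoves}) and be left with $q^{-2}C(\viredge)\xrightarrow{1\otimes\chi_{o}}tq^{-2}C(\arcsud)\simeq tq^{-2}C(\virnegxd)$. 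Note that the negative crossing survives as its full two-term complex; nothing like your wholesale collapse of ``wide edge plus negative crossing'' happens, so the technical core you defer to is both missing and mislocated.

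The non-isomorphism half of your plan is sound in spirit and parallels the paper's Example \ref{exa:RIIbFail} (the paper closes $D_{1}$ into an unknot diagram whose homology is that of the left-handed trefoil up to a shift, rather than using a two-component unlink), but two cautions are in order. First, you may not compute $P^{b}$ of the $D_{1}$-diagram using the deformed relation (MOY IIb*), since that relation is itself a consequence of Theorem \ref{thm:CatRIIb} and Lemma \ref{lem:CatMOYIIb}; you must either compute directly from the Rouquier complexes, as you suggest, or first establish the isomorphism half. Second, since $P^{b}$ is the $t=-1$ specialization of the Poincar\'e series, an isomorphism up to grading shift would force the two values of $P^{b}$ to agree up to a unit $\pm q^{i}a^{j}$, so your computation must show the discrepancy is more than a monomial factor; in the paper's example this is visible already at the Poincar\'e-series level, where the ratio is $aq^{2}+t^{2}q^{2}+t^{2}aq^{4}$.
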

In Section \ref{sec:GenDia} we prove the above theorem and give an
explicit example of a diagram of the unknot that does not have the
HOMFLY-PT homology of the unknot (Example \ref{exa:RIIbFail}). Recall
$\HH(D)$ is a triply-graded vector space. Suppose $d_{ijk}=\dim(\HH(D)_{i.j.k})$,
then we can define the Poincaré series of $\HH(D)$ as 
\begin{equation}
\PP(D)=\sum_{i,j,k\in\ZZ}d_{ijk}q^{i}a^{j}t^{k}.\label{eq:PoincareSeries}
\end{equation}

Let $P(D)$ denote the HOMFLY-PT polynomial of the link diagram $D.$
Murakami, Ohtsuki, and Yamada introduced a state sum formulation of
the HOMFLY-PT polynomial commonly called the MOY construction \cite{MOY}.
Their approach resolves a link diagram into a $\ZZ(q,a)$-linear combination
of oriented planar $4$-regular graphs. They give relations which
evaluate each such planar graph as an element of $\ZZ(q,a).$ The
resulting rational function from this process for any link diagram
$D$ is its HOMFLY-PT polynomial $P(D).$

We now define a deformed HOMFLY-PT polynomial $P^{b}(D)=\left.\PP(D)\right|_{t=-1}$.
In the case that $D$ is presented as a braid closure then $P^{b}(D)=P(D)$.
However, this is not true for general link diagrams. We collect properties
of $P^{b}(D)$ into the following theorem.
\begin{thm}
[See Theorem \ref{thm:PBPoly}] Let $D$ be a link diagram. $P^{b}(D)$
is an invariant of link diagrams up to braidlike isotopy satisfying
the skein relation $qP^{b}(\posx)-q^{-1}P^{b}(\negx)=(q-q^{-1})P^{b}(\arcs).$
Furthermore, $P^{b}(D)$ satisfies the relations in Figure \ref{fig:IntroPB}.
\end{thm}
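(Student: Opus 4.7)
The plan is to deduce each of the three assertions from a corresponding statement at the level of the triply graded homology or its underlying chain complexes, specialized at $t=-1$.

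\textbf{Braidlike invariance.} This is essentially free. Since $\PP(D)$ is by definition the graded dimension of $\HH(D)$, and $\HH(D)$ is invariant up to graded isomorphism under braidlike isotopies by the main result of \cite{KR08b}, the numbers $d_{ijk}$ in \eqref{eq:PoincareSeries} are braidlike invariants, and hence so is the specialization $P^{b}(D)=\PP(D)|_{t=-1}$.

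\textbf{Skein relation.} In the Khovanov--Rozansky setup, each crossing is defined as a (shifted) mapping cone between the chain complexes of its oriented and singular (wide edge) resolutions. For any diagram $D$ this produces distinguished triangles of the form
\[
C(D_{\arcs}) \to C(D_{\posx}) \to C(D_{\wideedge}) \to
\]
and an analogous one for $\negx$, each carrying specific $q$-, $a$-, and $t$-grading shifts which differ between the positive and negative cases. Taking Poincar\'e series and specializing at $t=-1$ converts each triangle into a linear identity. Forming the combination $qP^{b}(\posx)-q^{-1}P^{b}(\negx)$ causes the wide edge contributions to cancel and produces the claimed skein relation $(q-q^{-1})P^{b}(\arcs)$.

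\textbf{MOY-type relations.} The relations in Figure \ref{fig:IntroPB} are lifts of the standard MOY graph evaluations to the chain complex level. Those not involving $\arcsacross$ are already established in \cite{KR08b} as homotopy equivalences of the underlying Koszul-type Soergel bimodule complexes, so they descend directly to $\PP$ and hence to $P^{b}$. The relations that do involve the virtual resolution $\arcsacross$ are handled via the virtual crossing filtration of \cite{AbRoz14} together with Theorem \ref{thm:CatRIIb}, which compares $\HH(\posxnegxd)$ and $\HH(\virnegxd)$ up to a computable grading shift and identifies the residual discrepancy between these and $\HH(\arcsacross)$ that witnesses the failure of Reidemeister IIb.

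\textbf{Main obstacle.} The hardest step is pinning down the $\arcsacross$-relations with the exact shifts claimed in Figure \ref{fig:IntroPB}, since here the new invariant $P^{b}$ genuinely deviates from the classical HOMFLY-PT polynomial. This is where the virtual crossing filtration does the real work: it replaces the opaque comparison between the Rouquier complexes of $\posxnegxd$, $\virnegxd$, and $\arcsacross$ by a computation in the associated graded, where the answer can be read off diagrammatically and then lifted back through the filtration. Once this computation is carried out, collecting the pieces with the invariance and skein relation yields the theorem.
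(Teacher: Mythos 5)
Your proposal is correct and, for two of the three assertions, follows the paper's route exactly: braidlike invariance is quoted directly from Khovanov--Rozansky (Theorem \ref{thm:BriadlikeIso}), and the relations of Figure \ref{fig:IntroPB} are obtained by decategorifying the chain-level statements -- Proposition \ref{prop:CatMOYMoves} for (MOY 0), (MOY I), (MOY IIa), (MOY III), the defining two-term complexes (\ref{eq:RouPos})--(\ref{eq:RouNeg}) for the two crossing-resolution identities, and the virtual-filtration result for the deformed (MOY IIb*) relation; note that the precise input there is Lemma \ref{lem:CatMOYIIb}, i.e.\ $C(\edgeedge)\simeq C(\viredge)\oplus\frac{q^{2}+aq^{6}}{1-q^{2}}C(\arcsud)$, of which Theorem \ref{thm:CatRIIb} is itself a consequence, and the virtual object entering that relation is the virtual crossing $\vir$ rather than the smoothing $\arcsacross$ -- a presentational slip, not a gap. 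Where you genuinely diverge is the skein relation: you derive it by taking $t$-Euler characteristics of the defining cones, i.e.\ $P^{b}(\posx)=P^{b}(\arcs)-q^{-2}P^{b}(\wideedge)$ and $P^{b}(\negx)=P^{b}(\arcs)-P^{b}(\wideedge)$, and cancelling the vertex terms in $qP^{b}(\posx)-q^{-1}P^{b}(\negx)$; the paper instead constructs an explicit homological-degree-one map $\psi\colon tq^{-1}C(\negx)\to qC(\posx)$ (the identity on the $\wideedge$ summands), applies Gaussian elimination to its mapping cone to obtain the categorified skein relation $\Cone(\psi)\simeq\bigl(qC(\arcs)\xrightarrow{-\chi_{o}\chi_{i}}tq^{-1}C(\arcs)\bigr)$, and only then specializes at $t=-1$. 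Both arguments ultimately rest on the same fact -- that $\PP(D)|_{t=-1}$ is the $t$-graded Euler characteristic of the $E_{1}$-page $H_{d_{g}}(C(D))$, so cone decompositions in the $t$-direction become linear identities -- so your version is valid and somewhat more elementary, while the paper's buys a chain-level (categorified) skein triangle as a byproduct.
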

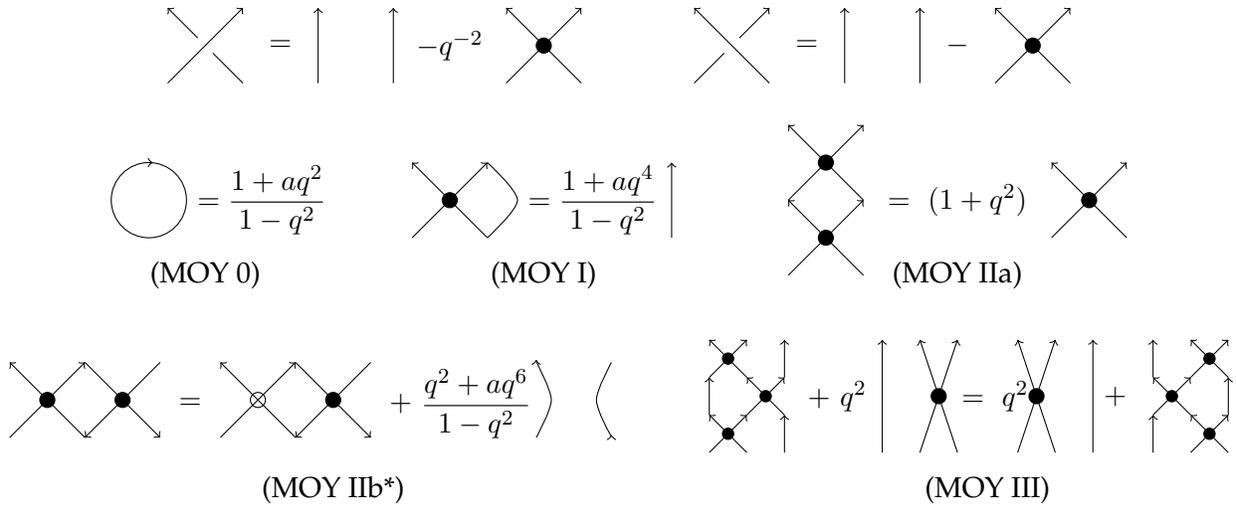
\begin{figure}[h]
\begin{tikzpicture} \draw[->] (3,0)--(4,1);\draw(4,0)--(3.6,0.4);\draw[->] (3.4,0.6)--(3,1); \node at (4.5,0.5) {$=$}; \draw[->] (5,0)--(5,1); \draw [->] (6,0)--(6,1);  \node at (6.75,0.5){$-q^{-2}$}; \draw[->] (7.5,0)--(8.5,1);\draw[->] (8.5,0)--(7.5,1);  \draw[fill=black] (8,0.5) circle (0.1cm);
\draw[->] (11,0)--(10,1);\draw(10,0)--(10.4,0.4);\draw[->] (10.6,0.6)--(11,1); \node at (11.5,0.5) {$=$}; \draw[->] (12,0)--(12,1); \draw [->] (13,0)--(13,1);  \node at (13.5,0.5){$-$}; \draw[->] (14,0)--(15,1);\draw[->] (15,0)--(14,1);  \draw[fill=black] (14.5,0.5) circle (0.1cm);
\end{tikzpicture}

\vspace{.2in}

\begin{tikzpicture} \draw (-3.5,1) circle (.5cm); \draw[->] (-3.5,1.5)--(-3.45,1.5); \node at (-2,1) {$=\dfrac{1+aq^2}{1-q^2}$}; \node at (-2.75,-0) {(MOY 0)};
\draw[->](0,0.5)--(1,1.5); \draw[->](1,0.5)--(0,1.5); \draw[fill=black](0.5,1) circle (0.1cm); \draw plot [smooth] coordinates {(1,1.5)(1.4,1)(1,0.5)}; \node at (2.4,1) {$= \dfrac{1+aq^4}{1-q^2}$}; \draw[->] (3.45,0.5)--(3.45,1.5);
\node at (1.75,-0) {(MOY I)}; \draw[->](5,0)--(6,1); \draw[->](6,0)--(5,1); \draw[fill=black](5.5,0.5) circle (0.1cm); \draw[->](5,1)--(6,2); \draw[->](6,1)--(5,2); \draw[fill=black](5.5,1.5) circle (0.1cm); \node at (7.25,1) {$=\,\,(1+q^2)$}; \draw[->](8.5,0.5)--(9.5,1.5); \draw[->](9.5,0.5)--(8.5,1.5); \draw[fill=black](9,1) circle (0.1cm);
\node at (7.25,-0) {(MOY IIa)};
\end{tikzpicture}

\vspace{.2in}

\begin{tikzpicture} \draw[->](-1.8,0.2)--(-0.8,1.2);\draw[->](-0.8,0.2)--(-1.8,1.2);\draw[fill=black] (-1.3,0.7) circle (0.1cm); \draw[->](-0.8,1.2)--(0.2,0.2);\draw[->](0.2,1.2)--(-0.8,0.2);\draw[fill=black] (-0.3,0.7) circle (0.1cm); \node at (0.6,0.65) {$=$}; \draw[->] (1,0.2)--(2,1.2); \draw[<-] (1,1.2)--(2,0.2); \draw (1.5,0.7) circle (0.1cm); \draw[<-] (2,0.2)--(3,1.2); \draw[->] (2,1.2)--(3,0.2); \draw[fill=black] (2.5,0.7) circle (0.1cm); \node at (3.4,0.65) {$+$}; \node at (4.4,0.65) {$\dfrac{q^2+aq^6}{1-q^2}$}; \draw[->] plot [smooth] coordinates {(5.2,0.2)(5.4,0.7)(5.2,1.2)}; \draw[<-] plot [smooth] coordinates {(6.2,0.2)(6,0.7)(6.2,1.2)}; \node at (2.5,-0.5) {(MOY IIb*)};

\draw[->](7.5,0)--(8,0.5);\draw[->](8,0)--(7.5,0.5);\draw[->](8.5,0)--(8.5,0.5); \draw[fill=black] (7.75,0.25) circle (0.075cm); \draw[->](7.5,0.5)--(7.5,1);\draw[->](8,0.5)--(8.5,1);\draw[->] (8.5,0.5)--(8,1); \draw[fill=black] (8.25,0.75) circle (0.075cm); \draw[->](7.5,1)--(8,1.5);\draw[->](8,1)--(7.5,1.5);\draw[->] (8.5,1)--(8.5,1.5); \draw[fill=black] (7.75,1.25) circle (0.075cm); \node at (9.2,0.75) {$+\,\, q^2$}; \draw[->] (9.8,0)--(9.8,1.5); \draw[->](10.3,0)--(10.8,1.5); \draw[->](10.8,0)--(10.3,1.5); \draw[fill=black] (10.55,0.75) circle (0.1cm); \node at (11.3,0.75) {$=\,\, q^2$}; \draw[->] (11.6,0)--(12.1,1.5); \draw[->](12.1,0)--(11.6,1.5); \draw[->](12.6,0)--(12.6,1.5); \draw[fill=black] (11.85,0.75) circle (0.1cm); \node at (12.9,0.75) {$+$}; \draw[->](13.4,0)--(13.4,0.5);\draw[->](13.9,0)--(14.4,0.5);\draw[->] (14.4,0)--(13.9,0.5); \draw[fill=black] (14.15,0.25) circle (0.075cm); \draw[->](13.4,0.5)--(13.9,1);\draw[->](13.9,0.5)--(13.4,1);\draw[->] (14.4,0.5)--(14.4,1); \draw[fill=black] (13.65,0.75) circle (0.075cm); \draw[->](13.4,1)--(13.4,1.5);\draw[->](13.9,1)--(14.4,1.5);\draw[->] (14.4,1)--(13.9,1.5); \draw[fill=black] (14.15,1.25) circle (0.075cm); \node at (11.2,-0.5) {(MOY III)}; \end{tikzpicture}

\caption{Relations for $P^{b}(D)$. In this figure we omit the notation $P^{b}(\cdot)$
for readability.\label{fig:IntroPB}}

\end{figure}

The relations in Figure \ref{fig:IntroPB} are not always enough to
determine $P^{b}(D)$, though in many examples they do suffice. In
Section \ref{sec:Decat} we use $P^{b}(D)$ to show that $\HH(D)$
is not an invariant of virtual links, even when presented as a virtual
braid closure, by showing it violates the virtual exchange move. 

Recent research involving annular link homology by Auroux-Grigsby-Wehrli
in the $\mathfrak{sl}_{2}$ case \cite{AGW15} and Queffelec-Rose
in the $\mathfrak{sl}_{n}$ case \cite{QR15} give some insight into
why to expect this issue. Annular link homology theories, that is
homology theories of closed braids in the thickened annulus $D^{2}\times S^{1}$,
are normally constructed via the use of Hochschild homology on chain
complexes of bimodules associated to braids. Hochschild homology $\mbox{HH}(C)$
acts as a trace on the homotopy category of bimodules, but in general
does not act like a Markov trace. In particular, if $\beta_{1}$ and
$\beta_{2}$ are two braids which are Markov equivalent and $C(\beta_{1})$
and $C(\beta_{2})$ are their associated chain complexes of bimodules,
then $\mbox{HH}(C(\beta_{1}))$ is not necessarily homotopy equivalent
to $\mbox{HH}(C(\beta_{2}))$. This corresponds to the fact that even
though the braid closures of $\beta_{1}$ and $\beta_{2}$ are isotopic
as links in $S^{3},$ they may not be isotopic in $D^{2}\times I.$ 

However, $\HH(D)$ is not quite an annular invariant. Though $\HH(D)$
can be constructed using Hochschild homology (see \cite{Kh07}), it
is invariant under the second Markov move. This is why $\HH(D)$ gives
invariants of links in $S^{3}$ when $D$ is presented as the closure
of a braid. Though it does show some behavior of an annular invariant
as well. The Reidemeister IIb configuration $\posxnegxd$ can only
appear in a braid closure when the braid axis is between the two strands.
In the case of annular invariants the braid axis is an obstruction
to isotopy, and disallows the isotopy $\posxnegxd\sim\arcsacross$.
In an annular invariant the exchange move $\posxnegxd\sim\negxposxd$
is disallowed, however this move preserves the isomorphism type of
$\HH(D).$

\subsection*{Outline of the paper. }

In Section \ref{sec:HOMFLYPoly} we review the definition of the HOMFLY-PT
polynomial and the MOY construction of the HOMFLY-PT polynomial. We
use nonstandard conventions in this text to illuminate the connections
with HOMFLY-PT homology. In Section \ref{sec:HOMFLYHomology} we review
the construction of HOMFLY-PT homology of links using closed braid
diagrams. We also review some homological algebra, in particular properties
of Koszul complexes. In Section \ref{sec:GenDia} we explore the properties
of HOMFLY-PT homology for general link diagrams. We introduce the
role of virtual crossings in this framework and use virtual crossings
as a tool to prove that HOMFLY-PT homology is not invariant under
the Reidemeister IIb move. Finally, in Section \ref{sec:Decat} we
explore the decategorification (Poincaré series) of HOMFLY-PT homology
and use it to prove that HOMFLY-PT homology cannot be extended to
an invariant of virtual links. We also include in Appendix \ref{sec:Virtual}
a description of the framework of virtual crossing filtrations and
the necessary homological algebra.

\subsection*{Acknowledgements. }

The author would like to thank Mikhail Khovanov, Lenny Ng, and Lev
Rozansky for many helpful conversations and their feedback. The author
would like to also thank Matt Hogancamp for encouraging him to further
explore an observation which eventually became the text here.

\section{The MOY construction of the HOMFLY-PT polynomial\label{sec:HOMFLYPoly}}

We begin by recalling two constructions of the HOMFLY-PT polynomial
for oriented links. Most of this material is well-known, but we introduce
it with the purpose of setting our conventions for the sequel. The
first construction is given by a skein relation and first appeared
in \cite{HOMFLY}. The second construction, first introduced by Murakami,
Ohtsuki, and Yamada in \cite{MOY}, constructs the HOMFLY-PT polynomial
in terms of a state sum formula. It is this second construction which
is categorified in the construction of Khovanov and Rozansky's HOMFLY-PT
homology.

\subsection{The HOMFLY-PT polynomial of an oriented link diagram}

Let $L$ denote a link in $\mathbb{\mathbb{R}}^{3}$. In this text
we will assume all links are oriented. Let $D$ denote a link diagram
of $L$, that is a regular projection of $L$ onto a copy of $\mathbb{R}^{2}$.
The HOMFLY-PT polynomial is an invariant of links which takes (oriented)
link diagrams to elements of $\mathbb{Z}(q,a)$.
\begin{defn}
\label{def:HOMFLYpoly} Let $D$ be a link diagram and let $O$ be
a simple closed curve in the plane of the link diagram. We define
the \emph{HOMFLY-PT polynomial, }$P(D)\in\mathbb{Z}(q,a)$, via the
following relations:\end{defn}
\begin{enumerate}
\item $P(\emptyset)=1,\,\,P(O)=\dfrac{1+aq^{2}}{1-q^{2}}$
\item $P(D\sqcup O)=P(D)P(O)$
\item $qP(D_{+})-q^{-1}P(D_{-})=(q-q^{-1})P(D_{0})$, where $D_{+},D_{-},\mbox{ and }D_{0}$
are link diagrams which are the same except in the neighborhood of
a single point where $D_{+}=\posx,$ $D_{-}=\negx,$ and $D_{0}=\arcs$.
\end{enumerate}
We will call a crossing which locally looks like $\posx$ a \emph{positive
}crossing\emph{, }and a crossing that locally looks like $\negx$
a \emph{negative crossing. }Let $f,g\in\mathbb{Z}(q,a)$ be nonzero.
We will write $f\dot{=}g$ if $f=(-1)^{i}a^{j}q^{k}g$ for some $i,j,k\in\mathbb{Z}$.
In other words, we write $f\dot{=}g$ if $f/g$ is a unit in $\mathbb{Z}[q^{\pm1},a^{\pm1}]$.
\begin{thm}
[HOMFLY \cite{HOMFLY} , PT \cite{PT}]\label{thm:HOMFLYPoly}Let
$D$ and $D'$ be two link diagrams of a link $L$. Then $P(D)\dot{=}P(D')$.
Furthermore, $P(D_{2})=-q^{-2}P(D_{1})$ and $P(D_{3})=$ $aq^{2}P(D_{1}),$
where $D_{1},D_{2}\mbox{, and }D_{3}$ are link diagrams which are
the same except in the neighborhood of a single point where they are
as in Figure \ref{fig:Reid1Shift}
\end{thm}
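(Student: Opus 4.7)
The plan is to establish three things in order: (i) the axioms in Definition \ref{def:HOMFLYpoly} determine $P(D) \in \mathbb{Z}(q,a)$ unambiguously on link diagrams, (ii) $P$ is strictly invariant under the oriented Reidemeister II and III moves, and (iii) Reidemeister I introduces the specific shift scalars $-q^{-2}$ and $aq^{2}$. For (i), one route is to invoke Alexander's theorem to present the link as a braid closure and define $P$ via the Ocneanu trace on the Hecke algebra $H_n(q)$, which by construction lands in $\mathbb{Z}(q,a)$ and satisfies the skein relation. An alternative route is direct strong induction on the number of crossings, applying axiom (3) to express $P(D)$ as a $\mathbb{Z}(q,a)$-linear combination of disjoint unions of unknots whose values are fixed by (1) and (2); consistency (independence of the order of resolutions) is the classical content of \cite{HOMFLY, PT}.

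For (ii), I would apply the skein relation at one of the two crossings in an R2 configuration, and at a carefully chosen crossing on each side of an R3 configuration. In both cases the resulting linear combinations telescope, after a short algebraic manipulation using $q \cdot q^{-1} = 1$ and the resolved smoothings, to the same diagrams on both sides of the move, yielding strict invariance (no scalar). For (iii), applying the skein at the kink crossing relates $P(D_2)$, $P(D_3)$, and the oriented smoothing of the kink, which is isotopic to $D_1 \sqcup O$ and hence equals $\tfrac{1+aq^2}{1-q^2} P(D_1)$ by axioms (1) and (2). A second constraint comes either from the Hecke algebra stabilization formula --- a Markov stabilization rescales the Ocneanu trace by an explicit scalar --- or from evaluating $P$ on a small test diagram such as the Hopf link computed two ways. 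Solving the two linear equations yields $P(D_2) = -q^{-2} P(D_1)$ and $P(D_3) = aq^2 P(D_1)$, and since both scalars are units in $\mathbb{Z}[q^{\pm 1}, a^{\pm 1}]$, combining with (ii) gives $P(D) \dot{=} P(D')$ for any two diagrams of the same link.

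The main obstacle is the well-definedness step (i): a priori, different orderings of skein resolutions could produce inconsistent values on the same diagram, and ruling this out requires either the Hecke-algebraic construction or a careful induction keeping track of the order of crossings. Steps (ii) and (iii) are then routine algebraic verifications using only the skein relation and R2-invariance. All of this material is contained in \cite{HOMFLY, PT}; the theorem is recorded here primarily to fix the (nonstandard) conventions used throughout the sequel.
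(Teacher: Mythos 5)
The paper itself offers no proof of this statement: it is stated with a citation to the original HOMFLY and PT papers and is used as a known classical fact. Your sketch is a reasonable summary of the standard argument and correctly isolates the genuine difficulty — well-definedness of the skein invariant — while correctly organizing the rest as routine algebra.

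A few observations on the details. Your step (iii) is set up as one linear equation (from the skein relation at the kink, which gives $qP(D_2)-q^{-1}P(D_3) = -q^{-1}(1+aq^2)P(D_1)$ in the paper's conventions) plus ``a second constraint.'' Be careful about where the second constraint comes from. The Hecke algebra route (Ocneanu trace with its explicit Markov rescaling) is clean and self-contained. The ``test diagram such as the Hopf link'' route is potentially circular: resolving the Hopf link by skein produces kinked unknot diagrams, so evaluating it requires the very R1 scalars you are trying to pin down. Note also that the na\"ive Whitney-trick constraint $c_+c_-=1$ is \emph{false} here: the two kinks $D_2$ and $D_3$ in Figure \ref{fig:Reid1Shift} curl to the same side, hence carry the same rotation contribution, and such a pair is \emph{not} cancellable by R2 and R3 alone. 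Indeed $c_+c_- = (-q^{-2})(aq^2) = -a \neq 1$, consistent with the fact that $P$ in these conventions is not a framing-independent invariant.

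There is also a more local route to (iii) that avoids the Hecke algebra entirely, using machinery the paper introduces immediately afterward: accept the MOY state-sum (Theorem \ref{thm:MOYPoly}) and resolve the single crossing of $D_2$. One gets
\[
\bar{P}(D_2) = \frac{1+aq^2}{1-q^2}\,\bar P(D_1) - q^{-2}\,\frac{1+aq^4}{1-q^2}\,\bar P(D_1) = -q^{-2}\,\bar P(D_1),
\]
and similarly for $D_3$ with weight $0$ on the vertex resolution, giving $aq^2$. Each scalar falls out of a single computation with no system of equations to solve. Of course this trades the Hecke algebra input for the MOY state-sum input, but within the paper's own framework it is the most direct verification.
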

We will often denote the HOMFLY-PT polynomial of a link by $P(L)$,
suppressing the choice of link diagram. In this case $P(L)$ is well-defined
up to a unit in $\mathbb{Z}[q^{\pm1},a^{\pm1}]$.

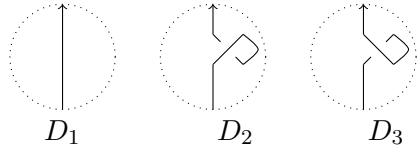
\begin{figure}[h]
\begin{tikzpicture}

\draw[->] (0,-.2)--(0,1.2);
\node at (0,-0.5) {$D_1$};
\draw[dotted] (0,0.5) circle (0.7cm);

\draw (2,-.2)--(2,0.4);
\draw (2,0.4)--(2.4,0.8);
\draw plot [smooth] coordinates {(2.4,0.8)(2.6,0.6)(2.4,0.4)};
\draw (2.4,0.4)--(2.3,0.5);
\draw (2.1,0.7)--(2,0.8);
\draw[->] (2,0.8)--(2,1.2);
\node at (2.3,-0.5) {$D_2$};
\draw[dotted] (2,0.5) circle (0.7cm);

\draw (4,-.2)--(4,0.4);
\draw (4,0.4)--(4.1,0.5);
\draw (4.3,0.7)--(4.4,0.8);
\draw plot [smooth] coordinates {(4.4,0.8)(4.6,0.6)(4.4,0.4)};
\draw (4.4,0.4)--(4,0.8);
\draw[->] (4,0.8)--(4,1.2);
\node at (4.3,-0.5) {$D_3$};
\draw[dotted] (4,0.5) circle (0.7cm);

\end{tikzpicture}

\caption{The diagrams $D_{1},D_{2},\mbox{ and \ensuremath{D_{3}}}$ from Theorem
\ref{thm:HOMFLYPoly} \label{fig:Reid1Shift}}
\end{figure}

\begin{rem}
We are using non-standard conventions for the HOMFLY-PT polynomial
in this text. The HOMFLY-PT polynomial as defined here is not a polynomial,
but rather is a rational function. One may choose a different normalization
where both $P(D)\in\mathbb{Z}[a^{\pm1},q^{\pm1}]$ is honestly a (Laurent)
polynomial and $P(D_{1})=P(D_{2})=P(D_{3})$ where $D_{1},D_{2},\mbox{ and \ensuremath{D_{3}}}$
are as in Figure \ref{fig:Reid1Shift}. The choice of normalization
here coincides with our conventions for HOMFLY-PT homology in the
sequel.
\end{rem}

\subsection{The MOY construction of the HOMFLY-PT polynomial}

Murakami, Ohtsuki, and Yamada in \cite{MOY} give a construction of
the $\mathfrak{sl}_{n}$ polynomial, $P_{n}(L)\in\mathbb{Z}[q,q^{-1}]$,
of a link $L$ using evaluations of oriented colored trivalent plane
graphs. These trivalent plane graphs correspond to the intertwiners
between tensor powers of fundamental representations of $\mathcal{U}_{q}(\mathfrak{sl}_{n})$.
The $\mathfrak{sl}_{n}$ polynomial is actually a specialization of
the HOMFLY-PT polynomial, that is $P_{n}(L)(q)=P(L)(q,a=q^{2-2n})$
(in our conventions). We may adjust the MOY construction of the $\mathfrak{sl}_{n}$
polynomial to compute the HOMFLY-PT polynomial. We will replace the
``wide edge'' graph of \cite{MOY} with a single degree $4$ vertex
(see Figure \ref{fig:MOYWideEdge}) which we will call a \emph{MOY
vertex.}

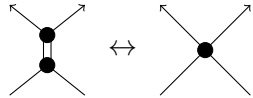
\begin{figure}[h]
\begin{tikzpicture}
\draw (0,0)--(0.5,0.4);
\draw (1,0)--(0.5,0.4);
\draw[fill=black] (0.5,0.4) circle (0.1cm);
\draw (0.45,0.4)--(0.45,0.8);
\draw (0.55,0.4)--(0.55,0.8);
\draw[fill=black] (0.5,0.8) circle (0.1cm);
\draw[->] (0.5,0.8)--(0,1.2);
\draw[->] (0.5,0.8)--(1,1.2);

\node at (1.5,0.6) {$\leftrightarrow$};

\draw[->] (2,0)--(3.2,1.2);
\draw[->] (3.2,0)--(2,1.2);
\draw[fill=black] (2.6,0.6) circle (0.1cm);
\end{tikzpicture}

\caption{The MOY wide edge graph and our MOY vertex \label{fig:MOYWideEdge}}

\end{figure}

The MOY state model of the HOMFLY-PT polynomial writes a link diagram
as a formal $\mathbb{Z}(q,a)$-linear combination of planar, oriented,
$4$-regular graphs. The orientation locally at each vertex is the
same as the orientation of the MOY vertex in Figure \ref{fig:MOYWideEdge}.
We call such planar, oriented, $4$-regular graphs \emph{MOY graphs.}

We now define the MOY construction of the HOMFLY-PT polynomial. Let
$D$ be a link diagram. We can resolve any crossing $c$ into either
an oriented smoothing $\arcs$ or a MOY vertex $\wideedge$(with consistent
orientation). To each resolution of $c$ we associate a \emph{weight}.
If we smooth the crossing then the resolution has weight $0$. If
we replace the crossing with a MOY vertex, then the weight is $-2$
if the crossing was positive and 0 if the crossing was negative. A
resolution chart is given in Figure \ref{fig:MOYRelations} for reference.
We define a \emph{state} $\sigma$ of $D$ as a choice of resolution
for every crossing $c$ in $D$. If $D$ has $n$ crossings, then
it has $2^{n}$ possible states. We define the \emph{weight of a state}
$\mu(\sigma)$ to be the sum of the weights of the chosen resolutions
of $\sigma$. Finally we will set $\nu(\sigma)$ to be the number
of MOY vertices in $\sigma.$

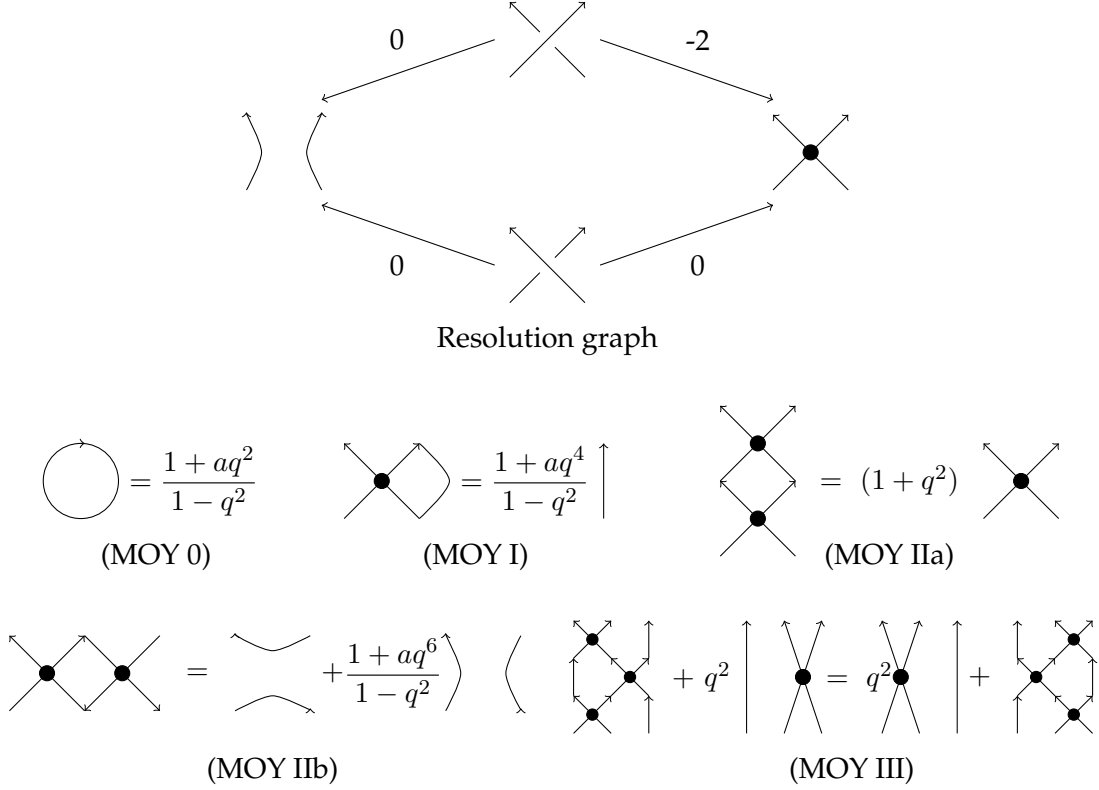
\begin{figure}[h]
\begin{tikzpicture}
\draw[->] plot [smooth] coordinates {(-.5,2)(-0.3,2.5)(-0.5,3)}; \draw[->] plot [smooth] coordinates {(.5,2)(0.3,2.5)(.5,3)};
\draw[->] (3,3.5)--(4,4.5); \draw (4,3.5)--(3.6,3.9); \draw[->] (3.4,4.1)--(3,4.5);
\draw[->] (4,0.5)--(3,1.5); \draw (3,0.5)--(3.4,0.9); \draw[->] (3.6,1.1)--(4,1.5);
\draw[->] (6.5,2)--(7.5,3); \draw[->] (7.5,2)--(6.5,3); \draw[fill=black] (7,2.5) circle (0.1cm);
\draw[<-] (0.5,1.8)--(2.8,1); \node at (1.5,1) {0}; \draw[<-] (0.5,3.2)--(2.8,4); \node at (1.5,4) {0}; \draw[->] (4.2,4)--(6.5,3.2); \node at (5.5,1) {0}; \draw[->] (4.2,1)--(6.5,1.8); \node at (5.5,4) {-2};
\node at (3.5,0) {Resolution graph};
\end{tikzpicture}
\vspace{.2in}

\begin{tikzpicture} 
\draw (-3.5,1) circle (.5cm);
\draw[->] (-3.5,1.5)--(-3.45,1.5);
\node at (-2,1) {$=\dfrac{1+aq^2}{1-q^2}$}; \node at (-2.5,0) {(MOY 0)};
\draw[->](0,0.5)--(1,1.5); \draw[->](1,0.5)--(0,1.5); \draw[fill=black](0.5,1) circle (0.1cm); \draw plot [smooth] coordinates {(1,1.5)(1.4,1)(1,0.5)}; \node at (2.4,1) {$= \dfrac{1+aq^4}{1-q^2}$}; \draw[->] (3.45,0.5)--(3.45,1.5);
\node at (1.75,-0) {(MOY I)};
\draw[->](5,0)--(6,1); \draw[->](6,0)--(5,1); \draw[fill=black](5.5,0.5) circle (0.1cm); \draw[->](5,1)--(6,2); \draw[->](6,1)--(5,2); \draw[fill=black](5.5,1.5) circle (0.1cm); \node at (7.25,1) {$=\,\,(1+q^2)$};
\draw[->](8.5,0.5)--(9.5,1.5); \draw[->](9.5,0.5)--(8.5,1.5); \draw[fill=black](9,1) circle (0.1cm);
\node at (7.25,-0) {(MOY IIa)};
\end{tikzpicture}

\vspace{.2in}

\begin{tikzpicture} 
\draw[->](0,0.3)--(1,1.3);\draw[->](1,0.3)--(0,1.3);\draw[fill=black] (0.5,0.8) circle (0.1cm);
\draw[->](1,1.3)--(2,0.3);\draw[->](2,1.3)--(1,0.3);\draw[fill=black] (1.5,0.8) circle (0.1cm);
\node at (2.5,0.8) {$=$}; 
\draw[->] plot [smooth] coordinates {(3,0.3)(3.5,0.5)(4,0.3)}; \draw[<-] plot [smooth] coordinates {(3,1.3)(3.5,1.1)(4,1.3)}; \node at (4.3,0.8) {$+$}; \node at (5.1,0.8) {$\dfrac{1+aq^6}{1-q^2}$}; \draw[->] plot [smooth] coordinates {(5.8,0.3)(6,0.8)(5.8,1.3)}; \draw[<-] plot [smooth] coordinates {(6.8,0.3)(6.6,0.8)(6.8,1.3)}; \node at (3.5,-0.5) {(MOY IIb)};

\draw[->](7.5,0)--(8,0.5);\draw[->](8,0)--(7.5,0.5);\draw[->](8.5,0)--(8.5,0.5);
\draw[fill=black] (7.75,0.25) circle (0.075cm);
\draw[->](7.5,0.5)--(7.5,1);\draw[->](8,0.5)--(8.5,1);\draw[->] (8.5,0.5)--(8,1);
\draw[fill=black] (8.25,0.75) circle (0.075cm);
\draw[->](7.5,1)--(8,1.5);\draw[->](8,1)--(7.5,1.5);\draw[->] (8.5,1)--(8.5,1.5);
\draw[fill=black] (7.75,1.25) circle (0.075cm);
\node at (9.2,0.75) {$+\,\, q^2$};
\draw[->] (9.8,0)--(9.8,1.5); \draw[->](10.3,0)--(10.8,1.5); \draw[->](10.8,0)--(10.3,1.5);
\draw[fill=black] (10.55,0.75) circle (0.1cm);
\node at (11.3,0.75) {$=\,\, q^2$};
\draw[->] (11.6,0)--(12.1,1.5); \draw[->](12.1,0)--(11.6,1.5); \draw[->](12.6,0)--(12.6,1.5);
\draw[fill=black] (11.85,0.75) circle (0.1cm);
\node at (12.9,0.75) {$+$}; 
\draw[->](13.4,0)--(13.4,0.5);\draw[->](13.9,0)--(14.4,0.5);\draw[->] (14.4,0)--(13.9,0.5);
\draw[fill=black] (14.15,0.25) circle (0.075cm);
\draw[->](13.4,0.5)--(13.9,1);\draw[->](13.9,0.5)--(13.4,1);\draw[->] (14.4,0.5)--(14.4,1);
\draw[fill=black] (13.65,0.75) circle (0.075cm);
\draw[->](13.4,1)--(13.4,1.5);\draw[->](13.9,1)--(14.4,1.5);\draw[->] (14.4,1)--(13.9,1.5);
\draw[fill=black] (14.15,1.25) circle (0.075cm); 
\node at (11.2,-0.5) {(MOY III)};
\end{tikzpicture}
\caption{Resolution chart and MOY relations. In this figure we omit the notation
$\bar{P}(\cdot)$ for readability.\label{fig:MOYRelations}}

\end{figure}

\begin{defn}
\label{def:MOYPoly}The \emph{MOY polynomial, }$\bar{P}(D)$, is given
by 
\begin{equation}
\bar{P}(D)=\sum_{\sigma}(-1)^{\nu(\sigma)}q^{\mu(\sigma)}\bar{P}(D_{\sigma})\label{eq:MOYpoly}
\end{equation}

Where $\bar{P}(D_{\sigma})$ is the evaluation of the MOY graph given
by the state $\sigma$ using the relations in Figure \ref{fig:MOYRelations}.\end{defn}
\begin{thm}
[ Murakami-Ohtsuki-Yamada \cite{MOY}]\label{thm:MOYPoly} The relations
given in Figure \ref{fig:MOYRelations} are sufficient to compute
$\bar{P}(D_{\sigma})$ as an element of $\mathbb{Z}(q,a)$ for any
link diagram $D$ and any state $\sigma.$ Furthermore, $\bar{P}(D)=P(D)$
for any link diagram.\end{thm}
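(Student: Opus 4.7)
The plan is to separate the statement into two independent claims: \textbf{(a)} the relations of Figure~\ref{fig:MOYRelations} suffice to evaluate every closed MOY graph to a well-defined element of $\mathbb{Z}(q,a)$, and \textbf{(b)} the resulting state sum $\bar{P}(D)$ coincides with the HOMFLY-PT polynomial $P(D)$ of Definition~\ref{def:HOMFLYpoly}.

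For (a), I would proceed by strong induction on the number of MOY vertices of $D_\sigma$. The base case of a disjoint union of embedded circles is handled by iterating MOY~0 together with the obvious multiplicativity of the state sum on disjoint graphs. For the inductive step, one shows that any closed MOY graph with at least one vertex admits a local patch matching the left-hand side of MOY~I, IIa, IIb, or III; the replacement strictly decreases vertex count (or produces graphs with strictly fewer faces), so the inductive hypothesis applies to each term on the right. The delicate point, and the main obstacle, is confluence: different choices of initial patch must produce the same rational function. Rather than checking overlap diagrams by hand, I would follow \cite{MOY} and define $\bar{P}(D_\sigma)$ intrinsically via a state sum over admissible edge colorings of $D_\sigma$; this sidesteps confluence and reduces part~(a) to verifying each relation locally in the coloring model, which is a finite computation.

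For (b), recall that $P(D)$ is uniquely determined by $P(\emptyset)=1$, $P(O)=(1+aq^2)/(1-q^2)$, disjoint-union multiplicativity, and the skein $qP(D_+)-q^{-1}P(D_-)=(q-q^{-1})P(D_0)$. The normalization $\bar{P}(\emptyset)=1$ follows from the empty state sum, the unknot evaluation is MOY~0 itself, and multiplicativity on disjoint link diagrams is immediate from the corresponding multiplicativity of $\bar{P}$ on disjoint MOY graphs together with Definition~\ref{def:MOYPoly}. The skein relation reduces to one identity. Applied to the single crossing being resolved, with weights $0$ for the smoothing $\arcs$, $-2$ for the MOY vertex $\wideedge$ replacing a positive crossing, and $0$ for the MOY vertex replacing a negative crossing, and with sign $(-1)^{\nu(\sigma)}$, Definition~\ref{def:MOYPoly} yields
\begin{align*}
\bar{P}(D_+) &= \bar{P}(D_0) - q^{-2}\bar{P}(D_{\wideedge}), \\
\bar{P}(D_-) &= \bar{P}(D_0) - \bar{P}(D_{\wideedge}),
\end{align*}
so that $q\bar{P}(D_+)-q^{-1}\bar{P}(D_-) = (q-q^{-1})\bar{P}(D_0)$, matching the HOMFLY-PT skein. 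Uniqueness of $P$ then forces $\bar{P}(D)=P(D)$ for every link diagram $D$.

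Thus the substantive work lies in (a), which I would either quote directly from \cite{MOY} or re-derive through the flow-coloring state sum; once consistency of the MOY calculus is granted, (b) is a short algebraic identity together with the standard uniqueness of the HOMFLY-PT skein.
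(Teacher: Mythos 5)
The paper does not actually prove this statement: it is quoted verbatim from Murakami--Ohtsuki--Yamada \cite{MOY}, so the ``paper's proof'' is a citation, and your part (a) -- deferring the consistency of the graph evaluation to \cite{MOY} or to their coloring state sum -- is exactly what the paper does. Your skein verification in part (b) is correct in the paper's conventions: splitting the state sum at the chosen crossing, with weights $0$ (smoothing), $-2$ (vertex from a positive crossing), $0$ (vertex from a negative crossing) and the sign $(-1)^{\nu(\sigma)}$, gives $\bar{P}(\posx)=\bar{P}(\arcs)-q^{-2}\bar{P}(\wideedge)$ and $\bar{P}(\negx)=\bar{P}(\arcs)-\bar{P}(\wideedge)$, hence $q\bar{P}(\posx)-q^{-1}\bar{P}(\negx)=(q-q^{-1})\bar{P}(\arcs)$ (here one tacitly extends $\bar{P}$ to diagrams containing both crossings and MOY vertices, which is harmless).

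The one genuine soft spot is the final sentence of (b): ``uniqueness of $P$ then forces $\bar{P}(D)=P(D)$ for every link diagram $D$.'' The skein relation together with the normalizations (1)--(3) determines the HOMFLY-PT \emph{invariant of a link}, but the paper's $P$ is a function of \emph{diagrams}, and axioms (1)--(3) alone do not pin down such a function: Theorem \ref{thm:HOMFLYPoly} only asserts well-definedness up to units, with prescribed Reidemeister I factors $-q^{-2}$ and $aq^{2}$. Concretely, for the two one-crossing unknot diagrams the axioms yield a single linear relation between their two values, not the values themselves. To get the diagram-level identity $\bar{P}(D)=P(D)$ on the nose you must additionally check that $\bar{P}$ is invariant under Reidemeister IIa, IIb, III and has the same Reidemeister I scaling as $P$; this is precisely what the MOY relations supply (MOY I gives the kink factors $-q^{-2}$ and $aq^{2}$, MOY IIa/IIb/III give the R2/R3 invariance), so the fix amounts to invoking part (a) for more than mere well-definedness of the graph evaluation. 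With that supplement your argument closes and is the standard route; as written, ``skein $+$ unknot $+$ disjoint union $\Rightarrow$ unique'' is not quite enough at the level of diagrams.
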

\begin{example}
\label{exa:LHTrefoil}As an example we will work out $\bar{P}(D)$
for the diagram of the left-handed trefoil knot given in Figure \ref{fig:negTrefoil}.
We resolve the diagram into its eight resolutions. Many of the resolutions
give the same graph due to symmetries in the diagram. Therefore, 
\[
\bar{P}(D)=\bar{P}(\Gamma_{0})-3\bar{P}(\Gamma_{1})+3\bar{P}(\Gamma_{2})-\bar{P}(\Gamma_{3}),
\]
where the graphs $\Gamma_{i}$ are shown in Figure \ref{fig:negTrefoil}.

\begin{figure}[h]
\scalebox{0.8}{
\begin{tikzpicture}
\draw (1,0)--(0,1); \draw (0,0)--(0.4,0.4); \draw (0.6,0.6)--(1,1);
\draw (1,1)--(0,2); \draw (0,1)--(0.4,1.4); \draw (0.6,1.6)--(1,2);
\draw[->] (1,2)--(0,3); \draw (0,2)--(0.4,2.4); \draw[->] (0.6,2.6)--(1,3);
\draw plot [smooth] coordinates {(0,3)(-0.5,3.25)(-0.75,1.5)(-0.5,-0.25)(0,0)};
\draw plot [smooth] coordinates {(1,3)(1.5,3.25)(1.75,1.5)(1.5,-0.25)(1,0)};
\node at (0.5,-0.5) {$D$};

\draw[->] (3,0)--(3,3); \draw [->] (4,0)--(4,3);
\draw plot [smooth] coordinates {(3,3)(2.5,3.25)(2.25,1.5)(2.5,-0.25)(3,0)};
\draw plot [smooth] coordinates {(4,3)(4.5,3.25)(4.75,1.5)(4.5,-0.25)(4,0)};
\node at (3.5,-0.5) {$\Gamma_0$};

\draw (6,0)--(6,1); \draw  (7,0)--(7,1);
\draw[->] (6,1)--(7,2);\draw[->] (7,1)--(6,2); \draw[fill=black] (6.5,1.5) circle (0.1cm);
\draw[->] (6,2)--(6,3); \draw[->]  (7,2)--(7,3);
\draw plot [smooth] coordinates {(6,3)(5.5,3.25)(5.25,1.5)(5.5,-0.25)(6,0)};
\draw plot [smooth] coordinates {(7,3)(7.5,3.25)(7.75,1.5)(7.5,-0.25)(7,0)};
\node at (6.5,-0.5) {$\Gamma_1$};

\draw[->] (9,0)--(10,1);\draw[->] (10,0)--(9,1); \draw[fill=black] (9.5,0.5) circle (0.1cm);
\draw[->] (9,1)--(10,2);\draw[->] (10,1)--(9,2); \draw[fill=black] (9.5,1.5) circle (0.1cm);
\draw (9,2)--(9,3); \draw  (10,2)--(10,3);
\draw plot [smooth] coordinates {(9,3)(8.5,3.25)(8.25,1.5)(8.5,-0.25)(9,0)};
\draw plot [smooth] coordinates {(10,3)(10.5,3.25)(10.75,1.5)(10.5,-0.25)(10,0)};
\node at (9.5,-0.5) {$\Gamma_2$};

\draw[->] (12,0)--(13,1);\draw[->] (13,0)--(12,1); \draw[fill=black] (12.5,0.5) circle (0.1cm);
\draw[->] (12,1)--(13,2);\draw[->] (13,1)--(12,2); \draw[fill=black] (12.5,1.5) circle (0.1cm);
\draw[->] (12,2)--(13,3);\draw[->] (13,2)--(12,3); \draw[fill=black] (12.5,2.5) circle (0.1cm);
\draw plot [smooth] coordinates {(12,3)(11.5,3.25)(11.25,1.5)(11.5,-0.25)(12,0)};
\draw plot [smooth] coordinates {(13,3)(13.5,3.25)(13.75,1.5)(13.5,-0.25)(13,0)};
\node at (12.5,-0.5) {$\Gamma_3$};

\end{tikzpicture}}
\caption{Diagram of the left-handed trefoil knot and its resolutions \label{fig:negTrefoil}}

\end{figure}
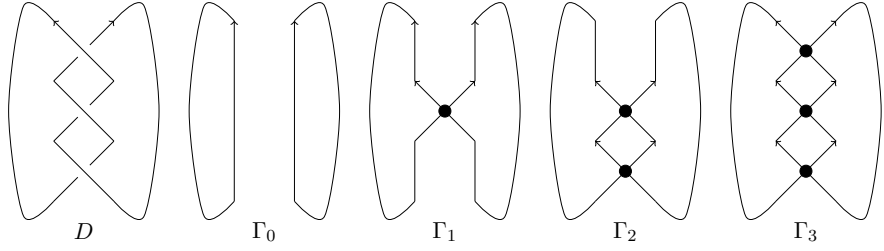

Using (MOY II), we see that $\bar{P}(\Gamma_{2})=(1+q^{2})\bar{P}(\Gamma_{1})$
and $\bar{P}(\Gamma_{3})=(1+q^{2})\bar{P}(\Gamma_{2})=(1+q^{2})^{2}\bar{P}(\Gamma_{1}).$
Using (MOY I) and (MOY 0) we see that 
\[
\bar{P}(\Gamma_{1})=\frac{1+aq^{4}}{1-q^{2}}\bar{P}(O)=\frac{(1+aq^{2})(1+aq^{4})}{(1-q^{2})^{2}},\,\,\,\bar{P}(\Gamma_{0})=\left(\frac{1+aq^{2}}{1-q^{2}}\right)^{2}.
\]

Therefore, combining everything, we compute
\[
\bar{P}(D)=\frac{1+aq^{2}}{1-q^{2}}\left(\frac{1+aq^{2}}{1-q^{2}}-\frac{3(1+aq^{4})}{1-q^{2}}+\frac{3(1+q^{2})(1+aq^{4})}{1-q^{2}}-\frac{(1+q^{2})^{2}(1+aq^{4})}{1-q^{2}}\right)
\]

which simplifies to 
\begin{equation}
\bar{P}(D)=\left(\frac{1+aq^{2}}{1-q^{2}}\right)(q^{2}+aq^{2}+aq^{6}).\label{eq:negTreMOY}
\end{equation}

\end{example}

\section{HOMFLY-PT homology for closed braid diagrams\label{sec:HOMFLYHomology}}

In this section we introduce the construction of Khovanov and Rozansky's
HOMFLY-PT homology. The approach of this construction is to associate
a chain complex of modules to every MOY graph and a \emph{bicomplex
}of modules to every link diagram. Our approach in this section is
most similar to the approach of Rasmussen in \cite{Ras06} where we
ignore his ``$\mathfrak{sl}_{n}$'' differential, as it is not needed
in the construction of HOMFLY-PT homology.

\subsection{Koszul complexes}

Before introducing HOMFLY-PT homology, we recall some terminology
and notation involving Koszul complexes. Let $R=\bigoplus_{i\in\mathbb{Z}}R_{i}$
be a $\mathbb{Z}$-graded commutative $\mathbb{Q}$-algebra and $M=\bigoplus_{i\in\mathbb{Z}}M_{i}$
be a $\mathbb{Z}$-graded $R$-module. It will be instructive to keep
the example of $R=\mathbb{Q}[\mathbf{x},\mathbf{y}]$ in mind, where
$\mathbf{x}$ and $\mathbf{y}$ are finite lists of variables (not
necessarily of the same length). We define the grading shift functor
$\bullet(k)$ by $M(k)_{j}=M_{j-k}$ for all $j\in\mathbb{Z}$. We
will commonly use a nonstandard notation for grading shifts. In particular,
we will set $q^{k}M:=M(k)$ and say $\deg_{q}(x)=q^{j}$ if $x\in M_{j}$.
\begin{defn}
Let $p\in R$ be an element of degree $k$. The \emph{Koszul complex
}of $p$ is defined as the chain complex 
\[
[p]_{R}=q^{k}R_{1}\xrightarrow{\,\,p\,\,}R_{0},
\]

where $p$ is used to denote the algebra endomorphism of $R$ given
by multiplication by $p$. Here $R_{0}=R_{1}=R$ and the subscript
is simply used to denote the homological degree of the module. We
will often write $[p]=[p]_{R}$ when there can be no confusion. Now
let $\mathbf{p}=p_{1},...,p_{k}$ be a sequence of elements in $R$.
Then we define the \emph{Koszul complex }of $\mathbf{p}$ as the complex
\[
\begin{bmatrix}p_{1}\\
\vdots\\
p_{k}
\end{bmatrix}=[p_{1}]\otimes_{R}\cdots\otimes_{R}[p_{k}]
\]

where $\otimes_{R}$ denotes the ordinary tensor product of chain
complexes. 
\end{defn}
As a convention, we will call the homological grading in Koszul complexes
the \emph{Hochschild grading} and denote it by $\deg_{a}$. We write
$\deg_{a}(x)=a^{k}$ to say that $x$ is in Hochschild degree $k$
and similarly write $a^{k}M$ to denote that $M$ is being shifted
$k$ in Hochschild degree. 

We say a sequence of elements $\mathbf{p}=p_{1},...,p_{k}$ in $R$
is a \emph{regular sequence }if $p_{m}$ is not a zero divisor in
$R/(p_{1},...,p_{m-1})$ for all $m=1,...,k$. The following proposition
is a standard fact in homological algebra and is proven in many introductory
texts such as \cite{Wei94}.
\begin{prop}
Let $\mathbf{p}=p_{1},...,p_{n}$ be a regular sequence in $R$. Then
the Koszul complex of $\mathbf{p}$ is a graded free $R$-module resolution
of $R/(p_{1},...,p_{n})$.
\end{prop}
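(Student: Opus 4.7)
The plan is to proceed by induction on $n$, the length of the regular sequence, using the fact that the Koszul complex of $\mathbf{p}$ is built up iteratively as tensor products of the length-one Koszul complexes $[p_i]$. At each stage the tensor product with $[p_{n}]$ is canonically identified with the mapping cone of multiplication by $p_n$, and this is the structural observation that drives the argument.

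For the base case $n=1$, the Koszul complex is simply $q^{k_1}R \xrightarrow{p_1} R$. Since the hypothesis that $\mathbf{p}=(p_1)$ is a regular sequence means exactly that $p_1$ is not a zero divisor in $R$, the multiplication map is injective, so $H_1([p_1])=0$ and $H_0([p_1])=R/(p_1)$. Each term is free of rank one as an $R$-module, and graded by construction, so this handles the base.

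For the inductive step, write $K^{(n-1)}$ for the Koszul complex of $p_1,\dots,p_{n-1}$, and observe that $K^{(n)}=K^{(n-1)}\otimes_R [p_n]$ is (up to the grading shift accounting for $\deg(p_n)$) the mapping cone of the chain map $p_n\colon K^{(n-1)}\to K^{(n-1)}$ given by multiplication. The short exact sequence of complexes $0\to K^{(n-1)}\to \operatorname{Cone}(p_n)\to K^{(n-1)}[1]\to 0$ yields a long exact sequence in homology whose connecting morphism is multiplication by $p_n$ on $H_\ast(K^{(n-1)})$. By the inductive hypothesis, $H_i(K^{(n-1)})=0$ for $i>0$ and $H_0(K^{(n-1)})=R/(p_1,\dots,p_{n-1})$, so the long exact sequence collapses to
\[
0\to H_1(K^{(n)})\to R/(p_1,\dots,p_{n-1})\xrightarrow{\,p_n\,}R/(p_1,\dots,p_{n-1})\to H_0(K^{(n)})\to 0,
\]
together with the vanishing $H_i(K^{(n)})=0$ for $i\geq 2$. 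The regular sequence hypothesis says precisely that $p_n$ is not a zero divisor on $R/(p_1,\dots,p_{n-1})$, so $H_1(K^{(n)})=0$ and $H_0(K^{(n)})=R/(p_1,\dots,p_n)$, as required.

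Finally, freeness and the graded structure come essentially for free: each $[p_i]$ has free terms $q^{k_i}R$ and $R$, so the tensor product $K^{(n)}$ has terms that are finite direct sums of shifted copies of $R$, hence graded free; the differentials are homogeneous because each $p_i$ is homogeneous. The only nontrivial input is the regularity assumption, and this is used at exactly one place in the argument, namely to conclude injectivity of the connecting map in the long exact sequence. I do not anticipate a main obstacle here since the argument is standard; the one subtlety to keep straight is the bookkeeping of the internal $q$-degree shifts in the identification of $K^{(n)}$ with a mapping cone, but this is mechanical.
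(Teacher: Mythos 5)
Your proof is correct: the inductive argument via the identification of $K^{(n)} = K^{(n-1)}\otimes_R [p_n]$ with the mapping cone of multiplication by $p_n$, the resulting long exact sequence with connecting map $p_n$, and the use of regularity to kill $H_1(K^{(n)})$ is precisely the standard argument. The paper itself does not supply a proof for this proposition but instead defers to the cited reference \cite{Wei94}, where the same inductive mapping-cone argument appears.
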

The notation we use for Koszul complexes is reminiscent of the notation
for a row vector in $R^{\oplus n}$. Note that we will always use
square brackets for Koszul complexes and round brackets for row vectors
in $R^{\oplus n}$ to eliminate any confusion. Along these lines,
we can look at ``row operations'' on Koszul complexes.
\begin{prop}
Let $\mathbf{p}=p_{1},...,p_{k}$ be a sequence of elements in $R$,
and let $\lambda\in\mathbb{Q}$, then
\[
\begin{bmatrix}\vdots\\
p_{i}\\
\vdots\\
p_{j}\\
\vdots
\end{bmatrix}\simeq\begin{bmatrix}\vdots\\
p_{i}+\lambda p_{j}\\
\vdots\\
p_{j}\\
\vdots
\end{bmatrix}
\]

A homotopy equivalence of this form will be called a \emph{change
of basis.}\end{prop}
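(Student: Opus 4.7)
The plan is to construct an explicit chain isomorphism between the two Koszul complexes, which in particular yields a homotopy equivalence. Write $C=[\mathbf{p}]$ and $C'=[\mathbf{p}']$, where $\mathbf{p}'$ agrees with $\mathbf{p}$ in every slot except the $i$-th, where $p_i$ is replaced by $p_i+\lambda p_j$. For the $q$-grading shifts on the two sides of the claimed equivalence to match, one implicitly assumes $\deg_q(p_i)=\deg_q(p_j)$, which is forced as soon as $p_i+\lambda p_j$ is required to be $q$-homogeneous.

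The first step is to reinterpret the Koszul complex in a flexible form: $[\mathbf{p}]$ is canonically isomorphic, as a bigraded $R$-module, to the exterior algebra $\Lambda_R^{*}(e_1,\ldots,e_k)$, where each generator $e_l$ is placed in Hochschild degree $1$ and $q$-degree $\deg_q(p_l)$. Under this identification the tensor-product differential translates into the unique $R$-linear odd derivation $d$ determined by $d(e_l)=p_l$; the Koszul sign in $d_{A\otimes B}(a\otimes b)=d_A(a)\otimes b+(-1)^{|a|}a\otimes d_B(b)$ is exactly what produces the odd-Leibniz rule $d(xy)=d(x)y+(-1)^{|x|}x\,d(y)$ on $\Lambda_R^{*}$. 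Similarly $C'$ corresponds to $\Lambda_R^{*}(e_1,\ldots,e_k)$ equipped with the odd derivation $d'$ given by $d'(e_l)=p_l$ for $l\neq i$ and $d'(e_i)=p_i+\lambda p_j$.

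Next I would write down the candidate isomorphism. Define $\phi$ to be the $R$-algebra automorphism of $\Lambda_R^{*}(e_1,\ldots,e_k)$ determined on generators by
\[
\phi(e_i)=e_i-\lambda e_j,\qquad \phi(e_l)=e_l\text{ for }l\neq i,
\]
and extended multiplicatively. Its two-sided inverse is obtained by replacing $\lambda$ with $-\lambda$, so $\phi$ is an automorphism of bigraded $R$-modules (this uses $\deg_q(e_i)=\deg_q(e_j)$).

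The final step is to verify $d'\phi=\phi d$, equivalently $\phi^{-1}d'\phi=d$. Because conjugation by a grading-preserving algebra automorphism takes odd derivations to odd derivations, both $\phi^{-1}d'\phi$ and $d$ are odd derivations of $\Lambda_R^{*}$, and equality reduces to checking agreement on the algebra generators $e_1,\ldots,e_k$. For $l\neq i$ this is immediate. For $l=i$, the calculation $d'(e_i-\lambda e_j)=(p_i+\lambda p_j)-\lambda p_j=p_i$ yields $\phi^{-1}d'\phi(e_i)=p_i=d(e_i)$. This is the only step that actually uses the explicit form $p_i+\lambda p_j$, and no real obstacle arises: the derivation viewpoint collapses an otherwise sign-laden tensor-product computation into a one-line verification on generators.
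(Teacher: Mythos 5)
Your proof is correct and is essentially the paper's argument in different clothing: both exhibit an explicit invertible chain map realizing the row operation (the unipotent substitution $e_i\mapsto e_i-\lambda e_j$, which on the two relevant tensor factors is exactly the paper's middle-degree matrix $\begin{pmatrix}1&\lambda\\0&1\end{pmatrix}$ with the identity in the outer degrees), the only difference being that you verify commutation with the differentials via the odd-derivation-on-generators formalism on $\Lambda_R^{*}(e_1,\dots,e_k)$ rather than by the paper's direct diagram chase on $[p_i]\otimes_R[p_j]$. Your added remark that homogeneity forces $\deg_q(p_i)=\deg_q(p_j)$ is a fair point that the paper sidesteps by omitting grading shifts.
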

\begin{proof}
We will omit grading shifts in the proof for clarity. We consider
the map $\Phi:[p_{i}]\otimes_{R}[p_{j}]\to[p_{i}+\lambda p_{j}]\otimes_{R}[p_{j}]$
given by 

\begin{center}
\begin{diagram}
[p_{i}]\otimes_{R}[p_{j}]=&R & \rTo{\begin{pmatrix}p_{i}\\p_{j}\end{pmatrix}} &R \oplus R & \rTo{(-p_j \,\, p_i)} & R \\
\dTo{\Phi}&\dTo{1}&    &\dTo{\begin{pmatrix}1 & \lambda\\0 & 1\end{pmatrix}}&             &\dTo{1}\\
[p_{i}+\lambda p_j]\otimes_{R}[p_{j}]=&R & \rTo_{\begin{pmatrix}p_{i}+\lambda p_j\\p_{j}\end{pmatrix}} &R \oplus R & \rTo_{(-p_j \,\, p_i+\lambda p_j)} & R \\
\end{diagram}
\end{center}

This map is clearly invertible.
\end{proof}

\subsection{Marked MOY graphs}

A \emph{marked MOY graph }is a MOY graph $\Gamma$ (possibly with
boundary) with markings such that the marks partition the graph into
some combination of \emph{elementary MOY graphs }as shown in Figure
\ref{fig:elemMOY}. We label the marks and the endpoints of the graph
(if any) with variables. Typically, though not necessarily, we will
label outgoing edges by variables $y_{i}$, incoming edges by variables
$x_{i}$, and internal marks by variables $t_{i}.$ An example of
this process is given in Figure \ref{fig:elemMOY} .

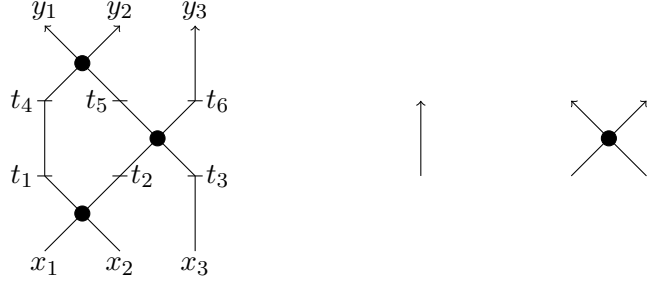
\begin{figure}[h]
\begin{tikzpicture}
\draw (0,0)--(1,1); \draw (1,0)--(0,1); \draw (2,0)--(2,1);\draw[fill=black] (0.5,0.5) circle (0.1cm);
\draw (0,1)--(0,2); \draw (1,1)--(2,2); \draw (2,1)--(1,2);\draw[fill=black] (1.5,1.5) circle (0.1cm);
\draw[->] (0,2)--(1,3); \draw[->] (1,2)--(0,3); \draw[->] (2,2)--(2,3);
\draw[fill=black] (0.5,2.5) circle (0.1cm);
\draw (-0.1,1)--(0.1,1);\draw (0.9,1)--(1.1,1);\draw (1.9,1)--(2.1,1);
\draw (-0.1,2)--(0.1,2);\draw (0.9,2)--(1.1,2);\draw (1.9,2)--(2.1,2);
\node at (0,-0.2) {$x_1$};\node at (1,-0.2) {$x_2$};\node at (2,-0.2) {$x_3$};
\node at (0,3.2) {$y_1$};\node at (1,3.2) {$y_2$};\node at (2,3.2) {$y_3$};
\node at (-0.3,1) {$t_1$};\node at (1.3,1) {$t_2$};\node at (2.3,1) {$t_3$};
\node at (-0.3,2) {$t_4$};\node at (0.7,2) {$t_5$};\node at (2.3,2) {$t_6$};

\draw[->] (5,1)--(5,2);

\draw[->] (7,1)--(8,2);\draw[->](8,1)--(7,2);\draw[fill=black] (7.5,1.5) circle (0.1cm);
\end{tikzpicture}

\caption{An example of a marked MOY graph and the elementary MOY graphs\label{fig:elemMOY}}

\end{figure}

To a marked MOY graph $\Gamma$, we will associate a collection of
rings. Let $\mathbf{x},\mathbf{y},\mathbf{t}$ denote the lists of
incoming, outgoing, and internal variables respectively. We first
define the \emph{total ring} of $\Gamma$, $E^{t}(\Gamma)$ as the
polynomial ring $\mathbb{Q}[\mathbf{x},\mathbf{y},\mathbf{t}]$ containing
all variables. We make this ring into a graded ring by setting $\deg_{q}(x_{i})=\deg_{q}(y_{i})=\deg_{q}(t_{i})=q^{2}$.
We call this grading the \emph{internal }or \emph{quantum grading.
}We also suppose that all elements in $E^{t}(\Gamma)$ have Hochschild
degree $a^{0}$. The other rings we will define will be subrings of
$E^{t}(\Gamma).$ The \emph{edge ring, }$E(\Gamma)$, is the polynomial
ring of incoming and outgoing (``edge'') variables $\mathbb{Q}[\mathbf{x},\mathbf{y}]$.
$E^{t}(\Gamma)$ has a natural (possibly infinite rank) free $E(\Gamma)$-module
structure. We also define the \emph{incoming ring} (resp. \emph{outgoing
ring}) by $E^{i}(\Gamma)=\mathbb{Q}[\mathbf{x}]$ (resp. $E^{o}(\Gamma)=\mathbb{Q}[\yy]$).
Since $E(\Gamma)\cong E^{i}(\Gamma)\otimes_{\mathbb{Q}}E^{o}(\Gamma)$
as $\mathbb{Q}$-algebras, then any $E(\Gamma)$-module can be considered
as a $E^{i}(\Gamma)$-$E^{o}(\Gamma)$-bimodule. Note that if $\Gamma$
does not have any boundary (e.g; if it is a resolution of a link diagram),
then $E(\Gamma)\cong E^{i}(\Gamma)\cong E^{o}(\Gamma)\cong\mathbb{Q}$.
We list the four rings associated to $\Gamma$ in Figure \ref{fig:EdgeRings}
for easy reference.

\begin{figure}[h]
\begin{tabular}{|c||c|c|c|}
\hline 
Notation & Name & Ring & Variables Included\tabularnewline
\hline 
\hline 
$E^{t}(\Gamma)$ & Total ring & $\mathbb{Q}[\mathbf{x},\mathbf{y},\mathbf{t}]$ & incoming, outgoing, internal\tabularnewline
\hline 
$E(\Gamma)$ & Edge ring & $\mathbb{Q}[\mathbf{x},\mathbf{y}]$ & incoming, outgoing\tabularnewline
\hline 
$E^{i}(\Gamma)$ & Incoming ring & $\mathbb{Q}[\mathbf{x}]$ & incoming\tabularnewline
\hline 
$E^{o}(\Gamma)$ & Outgoing ring & $\mathbb{Q}[\mathbf{y}]$ & outgoing\tabularnewline
\hline 
\end{tabular}\caption{The four rings associated to a marked MOY graph $\Gamma$\label{fig:EdgeRings}}
\end{figure}

We will now define complexes $C(\Gamma)$ of free $E(\Gamma)$-modules
associated to a marked MOY graph $\Gamma.$ The chain modules of $C(\G)$
will be direct sums of shifted copies of $E^{t}(\Gamma)$. We do this
by first defining Koszul complexes associated to the elementary MOY
graphs and then give rules for how gluing the graphs together affects
the complexes associated to them. We will use the symbols $\arc$
and $\wideedge$ to denote the elementary arc and vertex MOY graphs.
To the arc, we associate the Koszul complex of $E^{t}(\arc)=E(\arc)=\mathbb{Q}[x,y]$-modules,
\begin{equation}
C(\arc)=[y-x]_{E(\arc)}=q^{2}aE(\arc)\xrightarrow{y-x}E(\arc)\label{eq:ArcComplex}
\end{equation}

and to the vertex graph, we associate the Koszul complex of $E^{t}(\wideedge)=E(\wideedge)=\mathbb{Q}[x_{1},x_{2},y_{1},y_{2}]$-modules,
\begin{equation}
\begin{aligned}C(\wideedge)=\begin{bmatrix}y_{1}+y_{2}-x_{1}-x_{2}\\
(y_{1}-x_{1})(y_{1}-x_{2})
\end{bmatrix}_{E(\wideedge)}= & q^{6}a^{2}E(\wideedge)\xrightarrow{A}q^{4}aE(\wideedge)\oplus q^{2}aE(\wideedge)\xrightarrow{B}E(\wideedge),\end{aligned}
\label{eq:VertexComplex}
\end{equation}

where 
\[
A=\begin{pmatrix}y_{1}+y_{2}-x_{1}-x_{2}\\
(y_{1}-x_{1})(y_{1}-x_{2})
\end{pmatrix},\,\,\,B=\begin{pmatrix}-(y_{1}-x_{1})(y_{1}-x_{2}) & y_{1}+y_{2}-x_{1}-x_{2}\end{pmatrix}
\]
Now suppose $\Gamma$ is a marked MOY graph with edge ring $E$ and
total ring $E^{t}$. Also let $\Gamma$' be another marked MOY graph
with edge ring $E'$ and total ring $E'^{t}$. The disjoint union
of these graphs $\Gamma\sqcup\Gamma'$ has edge ring $E''\cong E\otimes_{\mathbb{Q}}E'$
and total ring $E''^{t}\cong E^{t}\otimes_{\mathbb{Q}}E'^{t}$. To
the marked MOY graph $\Gamma\sqcup\Gamma'$ we will associated the
complex of $E''$-modules $C(\Gamma\sqcup\Gamma'):=C(\Gamma)\otimes_{\mathbb{Q}}C(\Gamma')$.
A picture of the corresponding diagram is shown in Figure \ref{fig:combiningMOYGraphs} 

\begin{figure}[h]
\begin{tikzpicture}

\draw(0,0)--(0,0.75); \draw(1,0)--(1,0.75);
\draw (-.2,0.75) rectangle (1.2,1.25); \node at (0.5,1) {$\Gamma$};
\draw[->](0,1.25)--(0,2); \draw[->](1,1.25)--(1,2);

\draw(2,0)--(2,0.75); \draw(3,0)--(3,0.75);
\draw (1.8,0.75) rectangle (3.2,1.25); \node at (2.5,1) {$\Gamma'$};
\draw[->](2,1.25)--(2,2); \draw[->](3,1.25)--(3,2);

\node at (1.5,-0.5) {$\Gamma\sqcup\Gamma'$};

\node at (5,1) {$\Gamma$}; \draw (5,1) circle (.5cm);
\node at (7,1) {$\Gamma'$}; \draw (7,1) circle (.5cm);
\draw[<-] plot [smooth] coordinates {(5.5,1.1)(6,1.3)(6.5,1.1)}; \draw (6,1.2)--(6,1.4);
\draw[->] plot [smooth] coordinates {(5.5,0.9)(6,0.7)(6.5,0.9)}; \draw (6,0.8)--(6,0.6);
\draw[->] (4,0)--(4.65,0.65); \draw[<-] (8,0)--(7.35,0.65);
\draw[<-] (4,2)--(4.65,1.35); \draw[->] (8,2)--(7.35,1.35);
\node at (6,1.6) {$z_1$}; \node at (6,0.4) {$z_2$};
\node at (6,-0.5) {$\Gamma\cup_\mathbf{z}\Gamma'$};
\end{tikzpicture}\caption{Examples of disjoint union and glueing of marked MOY graphs \label{fig:combiningMOYGraphs}}
\end{figure}
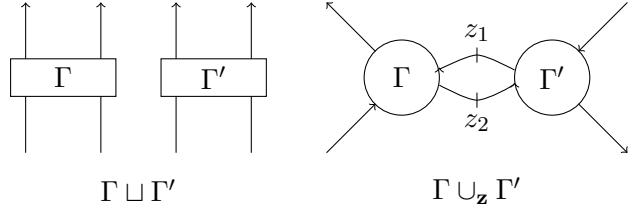

Finally, we define a complex for when we glue two marked MOY graphs
together. Let $\Gamma$ and $\Gamma'$ be two marked MOY graphs. We
can glue outgoing edges of $\Gamma$ to incoming edges of $\Gamma'$
(or vice versa) to get a new marked MOY graph. First suppose only
one endpoint from each graph are being glued together. Suppose that
endpoint in both $\Gamma$ and $\Gamma'$ is labeled by the variable
$z$ (that is, $z\in E^{i}(\Gamma)\cap E^{o}(\Gamma')$ or $z\in E^{i}(\Gamma')\cap E^{o}(\Gamma)$)
. Then we define the new graph $\Gamma\cup_{z}\Gamma'$ by identifying
the endpoints labeled by $z$ and associate to $\Gamma\cup_{z}\Gamma'$
the complex 
\begin{equation}
C(\Gamma\cup_{z}\Gamma'):=C(\Gamma)\otimes_{\mathbb{Q}[z]}C(\Gamma')\label{eq:MOYGluingRel}
\end{equation}

The edge ring of $\Gamma\cup_{z}\Gamma'$ is $E(\Gamma\cup_{z}\Gamma')=(E(\Gamma)\otimes_{\mathbb{Q}[z]}E(\Gamma'))/(z)$
and the total ring is $E^{t}(\Gamma\cup_{z}\Gamma')=E^{t}(\Gamma)\otimes_{\mathbb{Q}[z]}E^{t}(\Gamma')$.
Note that after gluing, $z$ is no longer in the edge ring as it is
an internal variable. We may glue multiple edges as once in a similar
manner. If $\mathbf{z}=z_{1},...,z_{n}$ are the variables at the
marked endpoints being identified, then we define $C(\Gamma\cup_{\mathbf{z}}\Gamma'):=C(\Gamma)\otimes_{\mathbb{Q}[\mathbf{z}]}C(\Gamma')$.
Similar to the case where we only identified one pair of edges, The
edge ring of $\Gamma\cup_{\zz}\Gamma'$ is $E(\Gamma\cup_{\zz}\Gamma')=(E(\Gamma)\otimes_{\mathbb{Q}[\zz]}E(\Gamma'))/(z_{1},...,z_{n})$
and the total ring is $E^{t}(\Gamma\cup_{\zz}\Gamma')=E^{t}(\Gamma)\otimes_{\mathbb{Q}[\zz]}E^{t}(\Gamma')$. 

We can also describe disjoint union and gluing of marked MOY graphs
in terms of Koszul complexes. Suppose $C(\Gamma)$ and $C(\Gamma')$
are given by the Koszul complexes
\[
C(\Gamma)=\begin{bmatrix}p_{1}\\
\vdots\\
p_{m}
\end{bmatrix}_{E^{t}(\Gamma)}\mbox{ and }\,C(\Gamma')=\begin{bmatrix}p_{1}'\\
\vdots\\
p_{n}'
\end{bmatrix}_{E^{t}(\Gamma')}.
\]

We can present $C(\G\sqcup\G')$ and $C(\G\cup_{\zz}\G')$ as the
following Koszul complexes:
\begin{equation}
C(\G\sqcup\G')=\begin{bmatrix}p_{1}\\
\vdots\\
p_{m}\\
p_{1'}\\
\vdots\\
p_{n}'
\end{bmatrix}_{E^{t}(\Gamma\sqcup\G')}\mbox{ and }C(\G\cup_{\zz}\G')=\begin{bmatrix}p_{1}\\
\vdots\\
p_{m}\\
p_{1'}\\
\vdots\\
p_{n}'
\end{bmatrix}_{E^{t}(\G\cup_{\zz}\G')}\label{eq:KoszulUnionandGlue}
\end{equation}
Here the distinction comes from the difference in total and edge rings.
$C(\G\sqcup\G')$ is a chain complex of free $E(\G\sqcup\G')$-modules
with the chain modules as direct sums of shifted copies of $E^{t}(\G\sqcup\G').$
However $C(\G\cup_{\zz}\G')$ is a chain complex of free $E(\G\cup_{\zz}\G')$-modules
with the chain modules as direct sums of shifted copies of $E^{t}(\G\cup_{\zz}\G').$
We now give another useful technique for simplifying the complexes
associated to marked MOY graphs, called \emph{mark removal.}
\begin{lem}
\label{lem:MarkRemoval} Suppose that $z$ is an internal variable
of a marked MOY graph $\G$ and $C(\G)$ is presented as a Koszul
complex of the sequence $\pp=p_{1},...,z-p_{i},...,p_{k}$ where $p_{1},...,p_{i},...,p_{k}\in E(\G)$.
Let $\psi:E^{t}(\Gamma)\to E^{t}(\Gamma)/(z-p_{i})$ be the quotient
map identifying $z$ with $p_{i}.$ Then, as complexes of $E(\G)$-modules,
\[
C(\G)\simeq\psi\left(\begin{bmatrix}p_{1}\\
\vdots\\
\widehat{z-p_{i}}\\
\vdots\\
p_{k}
\end{bmatrix}\right)\simeq\begin{bmatrix}p_{1}\\
\vdots\\
\widehat{z-p_{i}}\\
\vdots\\
p_{k}
\end{bmatrix}_{E^{t}(\G)/(z-p_{i})}
\]

omitting the term $z-p_{i}$ from the sequence.
\end{lem}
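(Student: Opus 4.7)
The plan is to recognize this statement as a standard reduction step for Koszul complexes over a polynomial ring: the linear factor $z - p_i$ can be used to eliminate the variable $z$, identifying the Koszul complex with its image under $\psi$. The key structural input is that $z$ is an internal variable while all the $p_j$'s lie in $E(\G)$, so $z$ appears in the sequence only through the single factor $z - p_i$, and in particular $z - p_i$ is a monic degree-one polynomial in $z$.

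First I would rewrite
$$C(\G) \;\cong\; K \otimes_{E^t(\G)} [z - p_i]_{E^t(\G)} \;\cong\; \Cone\bigl(q^{2}a\, K \xrightarrow{\,z - p_i\,} K\bigr),$$
where $K := [p_1, \ldots, \widehat{z - p_i}, \ldots, p_k]_{E^t(\G)}$. Setting $R := E^t(\G)/(z - p_i)$, the quotient map $\psi$ splits over $R$ (and hence over $E(\G)$): since $z - p_i$ is monic of degree one in $z$, the polynomial ring $E^t(\G)$ decomposes as an $R$-module as
$$E^t(\G) \;=\; \bigoplus_{j \geq 0} (z - p_i)^j\, R,$$
and multiplication by $z - p_i$ carries the $j$-th summand isomorphically onto the $(j+1)$-st. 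Because each chain module of $K$ is free over $E^t(\G)$, and the internal Koszul differentials of $K$ involve only the $p_j \in E(\G) \subset R$, the differentials preserve this grading by powers of $z - p_i$; thus the same refinement passes to the cone above.

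The core step is then a Gaussian elimination in this refined grading. In the cone, the $z - p_i$ differential identifies each summand $(z - p_i)^j\,\psi(K)$ of the top copy with $(z - p_i)^{j+1}\,\psi(K)$ of the bottom copy via an isomorphism on each chain module, and these isomorphisms commute with the residual Koszul differentials. Cancelling all such contractible pairs for $j \geq 0$ leaves exactly the summand $(z - p_i)^0\,\psi(K) = \psi(K)$, which gives the first homotopy equivalence $C(\G) \simeq \psi(K)$ as complexes of $R$-modules, and a fortiori of $E(\G)$-modules. The second equivalence in the lemma is then immediate: applying $\psi$ to the Koszul complex $K$ is precisely base change to $E^t(\G)/(z - p_i)$, and since each $p_j$ with $j \neq i$ already lies in $E(\G)$ and is fixed by $\psi$, the resulting complex is $[p_1, \ldots, \widehat{z - p_i}, \ldots, p_k]_{E^t(\G)/(z - p_i)}$.

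The main subtlety to keep honest is the module structure: the underlying two-term homotopy equivalence $[z - p_i]_{E^t(\G)} \simeq E^t(\G)/(z - p_i)$ is \emph{not} valid as complexes of $E^t(\G)$-modules, since the quotient is not a projective $E^t(\G)$-module. It becomes a genuine homotopy equivalence only after restriction of scalars to $R$ (and in particular to $E(\G)$), where the splitting of $\psi$ exists. Phrasing the argument directly as Gaussian elimination in the cone, rather than tensoring a module-level equivalence through the mixed-ring tensor product $\otimes_{E^t(\G)}$, keeps everything naturally in the category of $E(\G)$-modules and sidesteps this issue.
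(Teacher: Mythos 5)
Your proof is correct. Note that the paper itself does not prove Lemma~\ref{lem:MarkRemoval}; it only cites \cite{Ras06} for it, so there is no in-text argument to compare against. Your argument is essentially the standard one: decompose $E^t(\G)=\bigoplus_{j\ge 0}(z-p_i)^jR$ over $R=E^t(\G)/(z-p_i)$ (valid because $z-p_i$ is monic of degree one in $z$), observe that the Koszul differentials of $K$ are $R$-linear since the remaining $p_j$ lie in $E(\G)\subset R$, and cancel the resulting matched pairs $(z-p_i)^j\psi(K)\rightarrow(z-p_i)^{j+1}\psi(K)$.

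Two small remarks, neither a gap. First, you invoke ``Gaussian elimination'' for infinitely many pairs at once; it is cleaner to justify this by writing down the explicit contraction $h_0:K^{\mathrm{bot}}\to K^{\mathrm{top}}$, $h_0((z-p_i)^jm)=(z-p_i)^{j-1}m$ for $j\ge 1$ and $h_0(m)=0$ on the degree-zero summand, and then checking $Dh+hD=1-\iota\pi$ directly. The cross-term $-d_Kh_0+h_0d_K$ vanishes because $d_K$ preserves the $(z-p_i)$-power grading, so the naive contraction really is a contraction; this also confirms that the correction term $\mu\varphi^{-1}\lambda$ in each Gaussian elimination step is $-d_K\circ (z-p_i)^{-1}\circ d_K=0$, so no new differentials are introduced. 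Second, your closing caveat about the module category is exactly right and worth keeping: the two-term equivalence $[z-p_i]_{E^t(\G)}\simeq R$ holds only over $R$ (and hence over $E(\G)$), not over $E^t(\G)$, which is why the statement is phrased ``as complexes of $E(\G)$-modules.'' One could also remark that the hypothesis $p_j\in E(\G)$ for $j\neq i$ is exactly what guarantees $d_K$ preserves the $(z-p_i)$-grading; your proof uses this, and without some such hypothesis the argument would break.
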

Various forms of this lemma are proven in other texts on HOMFLY-PT
homology, such as \cite{KR08b,Ras06}. We refer the reader to \cite{Ras06}
for this exact form. Lemma \ref{lem:MarkRemoval} allows us to freely
add in or remove marks without changing the homotopy type of the complex
(as a complex of $E(\G)$-modules). This implies the following very
helpful statement.
\begin{cor}
Let $\G$ and $\G'$ be two marked MOY graphs whose underlying (unmarked)
MOY graphs are the same (isomorphic as oriented graphs). Then $C(\Gamma)\simeq C(\Gamma')$
as complexes of $E(\G)=E(\G')$-modules.\end{cor}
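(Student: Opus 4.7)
The plan is to show that adding (or equivalently removing) an internal mark preserves the homotopy type of the associated complex, so that $C(\G)$ depends only on the underlying unmarked graph. Since the underlying graphs of $\G$ and $\G'$ coincide, they share the same incoming and outgoing boundary edges, and hence $E(\G)=E(\G')$ by definition. Any two markings of the same underlying graph can be connected through a common refinement $\widetilde{\G}$ whose mark set is the union of the mark sets of $\G$ and $\G'$, so it suffices to prove $C(\G)\simeq C(\widetilde{\G})$, and symmetrically $C(\G')\simeq C(\widetilde{\G})$.

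To establish this I would add the extra marks of $\widetilde{\G}$ to $\G$ one at a time. Suppose a single new mark labeled $t$ is introduced in the interior of some existing edge of $\G$, splitting the edge at $t$ and inserting a new elementary arc. Let $y$ denote the variable at one endpoint of that edge, viewed as an outgoing variable of the adjacent elementary piece, so in particular $y \in E(\G)$. By equations (\ref{eq:ArcComplex})--(\ref{eq:KoszulUnionandGlue}) the Koszul presentation of the refined complex $C(\G_{+t})$ acquires an additional factor $y-t$, while $t$ replaces $y$ in the Koszul factors coming from the adjacent elementary piece (either an arc or a vertex). Applying Lemma \ref{lem:MarkRemoval} with $z = t$ and $p_i = y$ collapses the factor $y-t$ and substitutes $t \mapsto y$ in all remaining factors, returning exactly the Koszul presentation of $C(\G)$. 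Iterating this procedure across the finitely many marks that distinguish $\widetilde{\G}$ from $\G$, and likewise for $\G'$, produces the chain of homotopy equivalences
\[
C(\G) \;\simeq\; C(\widetilde{\G}) \;\simeq\; C(\G')
\]
as complexes of $E(\G)$-modules.

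The main point of care is verifying that the hypothesis of Lemma \ref{lem:MarkRemoval} — namely that the internal variable being eliminated appears linearly in some Koszul factor whose residual term lies in $E(\G)$ — holds at each step. This is automatic, because the newly inserted elementary arc always contributes a factor of the linear form $y-t$ with $y$ already an edge variable, regardless of whether the adjacent elementary piece is an arc or a vertex. The nonlinear factor $(y_1-x_1)(y_1-x_2)$ in the vertex complex is not an obstruction, since linearity is only required in the single variable $t$ we are eliminating. The only bookkeeping required is a consistent identification of internal marks under the isomorphism of underlying oriented graphs, which fixes the correspondence between the variables of $E^t(\G)$ and $E^t(\G')$ modulo the refinement step.
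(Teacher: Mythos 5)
Your proof is correct and is essentially the paper's own argument: the paper deduces this corollary directly from Lemma \ref{lem:MarkRemoval} with the remark that marks can be freely added or removed, which is exactly your add-one-mark-at-a-time, common-refinement argument made explicit. One small slip: the variable $y$ at the far end of the subdivided edge need not lie in $E(\G)$ when that edge is internal to $\G$, but this is harmless, since mark removal (as the paper itself applies it, e.g.\ in the proof of Proposition \ref{prop:virtualMOYmoves}) only requires the eliminated internal variable to appear linearly in a single Koszul entry whose complementary term does not involve it.
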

\begin{example}
\label{exa:MOYKozsul}Consider the marked MOY graph from Figure \ref{fig:ExampleBuild}.
The marks partition the MOY graphs into six elementary MOY graphs
(three MOY vertices and three arcs) which are drawn below in Figure
\ref{fig:ExampleBuild}.

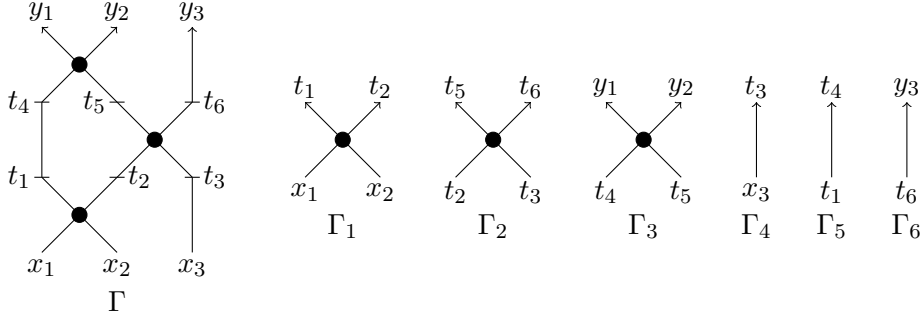
\begin{figure}[h]
\begin{tikzpicture} \draw (-3.5,-1)--(-2.5,0); \draw (-2.5,-1)--(-3.5,0); \draw (-1.5,-1)--(-1.5,0);\draw[fill=black] (-3,-0.5) circle (0.1cm); \draw (-3.5,0)--(-3.5,1); \draw (-2.5,0)--(-1.5,1); \draw (-1.5,0)--(-2.5,1);\draw[fill=black] (-2,0.5) circle (0.1cm); \draw[->] (-3.5,1)--(-2.5,2); \draw[->] (-2.5,1)--(-3.5,2); \draw[->] (-1.5,1)--(-1.5,2); \draw[fill=black] (-3,1.5) circle (0.1cm); \draw (-3.6,0)--(-3.4,0);\draw (-2.6,0)--(-2.4,0);\draw (-1.6,0)--(-1.4,0); \draw (-3.6,1)--(-3.4,1);\draw (-2.6,1)--(-2.4,1);\draw (-1.6,1)--(-1.4,1); \node at (-3.5,-1.2) {$x_1$};\node at (-2.5,-1.2) {$x_2$};\node at (-1.5,-1.2) {$x_3$}; \node at (-3.5,2.2) {$y_1$};\node at (-2.5,2.2) {$y_2$};\node at (-1.5,2.2) {$y_3$}; \node at (-3.8,0) {$t_1$};\node at (-2.2,0) {$t_2$};\node at (-1.2,0) {$t_3$}; \node at (-3.8,1) {$t_4$};\node at (-2.8,1) {$t_5$};\node at (-1.2,1) {$t_6$}; \node at (-2.5,-1.65) {$\Gamma$};
\draw[->] (0,0)--(1,1); \draw[->] (1,0)--(0,1); \draw[fill=black] (0.5,0.5) circle (0.1cm); \draw[->] (2,0)--(3,1); \draw[->] (3,0)--(2,1); \draw[fill=black] (2.5,0.5) circle (0.1cm); \draw[->] (4,0)--(5,1); \draw[->] (5,0)--(4,1); \draw[fill=black] (4.5,0.5) circle (0.1cm); \draw[->] (6,0)--(6,1); \draw[->] (7,0)--(7,1); \draw[->] (8,0)--(8,1);
\node at (0,-.2) {$x_1$};\node at (1,-.2) {$x_2$}; \node at (2,-.2) {$t_2$};\node at (3,-.2) {$t_3$}; \node at (4,-.2) {$t_4$};\node at (5,-.2) {$t_5$}; \node at (6,-.2) {$x_3$}; \node at (7,-.2) {$t_1$}; \node at (8,-.2) {$t_6$};
\node at (0,1.2) {$t_1$};\node at (1,1.2) {$t_2$}; \node at (2,1.2) {$t_5$};\node at (3,1.2) {$t_6$}; \node at (4,1.2) {$y_1$};\node at (5,1.2) {$y_2$}; \node at (6,1.2) {$t_3$}; \node at (7,1.2) {$t_4$}; \node at (8,1.2) {$y_3$};
\node at (0.5,-0.65) {$\G_1$}; \node at (2.5,-0.65) {$\G_2$}; \node at (4.5,-0.65) {$\G_3$}; \node at (6,-0.65) {$\G_4$};\node at (7,-0.65) {$\G_5$};\node at (8,-0.65) {$\G_6$}; \end{tikzpicture}

\caption{The marked MOY graph in Example \ref{exa:MOYKozsul} and its elementary
MOY graphs \label{fig:ExampleBuild}}

\end{figure}

We can write $\G$ as $(\G_{1}\sqcup\G_{4})\cup_{t_{1},t_{2},t_{3}}(\G_{2}\sqcup\G_{5})\cup_{t_{4},t_{5},t_{6}}(\G_{3}\sqcup\G_{6}),$
and therefore 
\[
C(\G)=C(\G_{1}\sqcup\G_{4})\otimes_{\QQ[t_{1},t_{2},t_{3}]}C(\G_{2}\sqcup\G_{5})\otimes_{\QQ[t_{4},t_{5},t_{6}]}C(\G_{3}\sqcup\G_{6})
\]

We can write $C(\G)$, after some applications of mark removal, as
\[
C(\G)\simeq\begin{bmatrix}y_{1}+y_{2}-t_{1}-t_{4}\\
y_{1}y_{2}-t_{1}t_{3}\\
t_{4}+y_{3}-t_{2}-x_{3}\\
t_{4}y_{3}-t_{2}x_{3}\\
t_{1}+t_{2}-x_{1}-x_{2}\\
t_{1}t_{2}-x_{1}x_{2}
\end{bmatrix}_{\QQ[x_{1},x_{2},x_{3},y_{1},y_{2},y_{3},t_{1},t_{2},t_{4}]}
\]

We invite the reader to finish the process of removing the internal
variables $t_{1},t_{2}\mbox{ and }t_{4}$ to get a \emph{finite rank}
complex of $\QQ[x_{1},x_{2},x_{3},y_{1},y_{2},y_{3}]$-modules.
\end{example}

\subsection{MOY braid graphs\label{sub:MOY-braid-graphs}}

A \emph{MOY braid graph }is a graph formed by taking a braid and replacing
every crossing with a MOY vertex, whose incoming and outgoing edges
are consistent with the orientation of the braid. The complexes associated
to MOY braid graphs and their ``braid closures'' satisfy the following
local relations (as proven in \cite{KR08b,Ras06}):
\begin{prop}
\label{prop:CatMOYMoves}Let $\G_{0},\G_{1a},\G_{1b},\G_{2a},\G_{2b},\G_{3a},\G_{3b},\G_{3c},\mbox{ and }\G_{3d}$
be MOY graphs as in Figure \ref{fig:PropCatMOY} below. Then

\begin{eqnarray}
C(\Gamma_{0}) & \simeq & \bigoplus_{i=0}^{\infty}q^{2i}(\QQ\oplus aq^{2}\QQ)\label{eq:CatMOY0}\\
C(\Gamma_{1a}) & \simeq & \bigoplus_{i=0}^{\infty}q^{2i}(C(\Gamma_{1b})\oplus aq^{4}C(\Gamma_{1b}))\label{eq:CatMOYI}\\
C(\Gamma_{2a}) & \simeq & C(\Gamma_{2b})\oplus q^{2}C(\Gamma_{2b})\label{eq:CatMOYII}\\
C(\Gamma_{3a})\oplus q^{2}C(\Gamma_{3b}) & \simeq & q^{2}C(\Gamma_{3c})\oplus C(\Gamma_{3d})\label{eq:CatMOYIII}
\end{eqnarray}

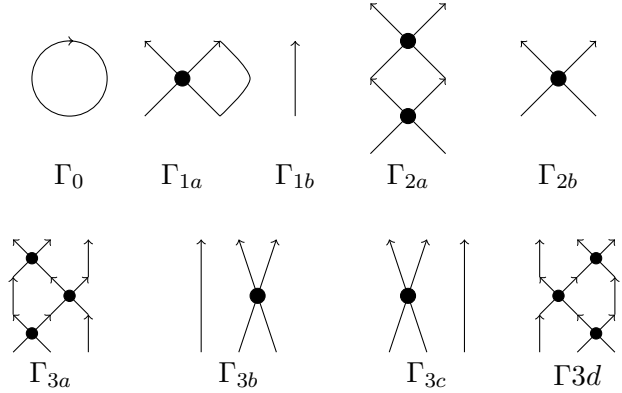
\begin{figure}[h]
\begin{tikzpicture}
\draw (0,1) circle (.5cm); \draw[->] (0,1.5)--(0.05,1.5);
\node at (0,-.3) {$\Gamma_0$};

\draw[->](1,0.5)--(2,1.5); \draw[->](2,0.5)--(1,1.5); \draw[fill=black](1.5,1) circle (0.1cm);
\draw plot [smooth] coordinates {(2,1.5)(2.4,1)(2,0.5)};
\node at (1.5,-.3) {$\Gamma_{1a}$};

\draw[->] (3,0.5)--(3,1.5);
\node at (3,-.3) {$\Gamma_{1b}$};

\draw[->](4,0)--(5,1); \draw[->](5,0)--(4,1); \draw[fill=black](4.5,0.5) circle (0.1cm);
\draw[->](4,1)--(5,2); \draw[->](5,1)--(4,2); \draw[fill=black](4.5,1.5) circle (0.1cm);
\node at (4.5,-.3) {$\Gamma_{2a}$};

\draw[->](6,0.5)--(7,1.5); \draw[->](7,0.5)--(6,1.5); \draw[fill=black](6.5,1) circle (0.1cm);
\node at (6.5,-.3) {$\Gamma_{2b}$};

\end{tikzpicture}

\vspace{.2in}

\begin{tikzpicture}
\draw[->](8,0)--(8.5,0.5);\draw[->](8.5,0)--(8,0.5);\draw[->] (9,0)--(9,0.5);
\draw[fill=black] (8.25,0.25) circle (0.075cm);
\draw[->](8,0.5)--(8,1);\draw[->](8.5,0.5)--(9,1);\draw[->] (9,0.5)--(8.5,1);
\draw[fill=black] (8.75,0.75) circle (0.075cm);
\draw[->](8,1)--(8.5,1.5);\draw[->](8.5,1)--(8,1.5);\draw[->] (9,1)--(9,1.5);
\draw[fill=black] (8.25,1.25) circle (0.075cm);
\node at (8.5,-.3) {$\Gamma_{3a}$};

\draw[->] (10.5,0)--(10.5,1.5); \draw[->](11,0)--(11.5,1.5); \draw[->](11.5,0)--(11,1.5);
\draw[fill=black] (11.25,0.75) circle (0.1cm);
\node at (11,-.3) {$\Gamma_{3b}$};

\draw[->] (13,0)--(13.5,1.5); \draw[->](13.5,0)--(13,1.5); \draw[->](14,0)--(14,1.5);
\draw[fill=black] (13.25,0.75) circle (0.1cm);
\node at (13.5,-.3) {$\Gamma_{3c}$};

\draw[->](15,0)--(15,0.5);\draw[->](15.5,0)--(16,0.5);\draw[->] (16,0)--(15.5,0.5);
\draw[fill=black] (15.75,0.25) circle (0.075cm);
\draw[->](15,0.5)--(15.5,1);\draw[->](15.5,0.5)--(15,1);\draw[->] (16,0.5)--(16,1);
\draw[fill=black] (15.25,0.75) circle (0.075cm);
\draw[->](15,1)--(15,1.5);\draw[->](15.5,1)--(16,1.5);\draw[->] (16,1)--(15.5,1.5);
\draw[fill=black] (15.75,1.25) circle (0.075cm);
\node at (15.5,-.3) {$\Gamma{3d}$};

\end{tikzpicture}

\caption{MOY graphs for Proposition \ref{prop:CatMOYMoves} \label{fig:PropCatMOY}}

\end{figure}

\end{prop}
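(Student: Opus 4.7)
The plan is to verify each homotopy equivalence by direct manipulation of the associated Koszul complexes, using three tools in combination: mark removal (Lemma \ref{lem:MarkRemoval}) to eliminate internal variables, change-of-basis moves to simplify the generating sequence, and the fact that a Koszul complex on a regular sequence in $E^{t}(\G)$ is a free resolution of the quotient ring, hence homotopy equivalent as a bounded complex of free $E(\G)$-modules to that quotient whenever the quotient is itself $E(\G)$-free. Grading shifts are tracked throughout by the $q$- and $a$-degree of each Koszul entry.

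For \eqref{eq:CatMOY0}, place a single mark on the circle: gluing the two endpoints of an arc via mark removal collapses the Koszul entry $y-x$ to $0$, yielding $C(\G_{0})=[0]_{\QQ[x]}\simeq\QQ[x]\oplus aq^{2}\QQ[x]$, and expanding $\QQ[x]=\bigoplus_{i\ge 0}q^{2i}\QQ$ gives the stated sum. For \eqref{eq:CatMOYI}, close off one strand of $C(\wideedge)$ by an arc; after mark removal the complex becomes $[y_{1}-x_{1},\,(y_{1}-x_{1})(y_{1}-x_{2})]$ over $\QQ[x_{1},x_{2},y_{1}]$, and subtracting $(y_{1}-x_{2})$ times the first entry from the second kills the quadratic generator. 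The complex then decomposes as $C(\G_{1b})\otimes_{\QQ}\QQ[x_{2}]\otimes_{\QQ}(\QQ\oplus aq^{4}\QQ)$, and expanding $\QQ[x_{2}]$ yields the claim.

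For \eqref{eq:CatMOYII}, the stacked vertices yield a length-four Koszul complex in the variables $x_{1},x_{2},y_{1},y_{2},t_{1},t_{2}$. Two change-of-basis moves driven by the linear entry $t_{1}+t_{2}-x_{1}-x_{2}$, with a secondary correction using $(t_{1}-x_{1})(t_{1}-x_{2})$, replace the two $y$-entries by $y_{1}+y_{2}-x_{1}-x_{2}$ and $(y_{1}-x_{1})(y_{1}-x_{2})$, which depend only on edge variables. The complex factors as
\[
\begin{bmatrix}t_{1}+t_{2}-x_{1}-x_{2}\\ (t_{1}-x_{1})(t_{1}-x_{2})\end{bmatrix}_{\QQ[x_{1},x_{2},t_{1},t_{2}]}\otimes_{\QQ[x_{1},x_{2}]} C(\G_{2b}),
\]
and since the left factor is the Koszul resolution of the rank-two graded free $\QQ[x_{1},x_{2}]$-module $\QQ[x_{1},x_{2}]\oplus q^{2}\QQ[x_{1},x_{2}]$, we conclude $C(\G_{2a})\simeq C(\G_{2b})\oplus q^{2}C(\G_{2b})$.

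The main obstacle is \eqref{eq:CatMOYIII}, the categorified Yang-Baxter relation, because neither $\G_{3a}$ nor $\G_{3d}$ contains a stack of two vertices at the same pair of positions, so \eqref{eq:CatMOYII} does not apply locally to either side. The approach is to present both sides as Koszul complexes over a common total ring containing all edge variables and all internal marks of the three-vertex graphs, perform mark removal to eliminate the internal marks in favor of symmetric-function expressions in the middle-edge variables, and then exhibit explicit change-of-basis transformations between the resulting presentations of $C(\G_{3a})\oplus q^{2}C(\G_{3b})$ and $q^{2}C(\G_{3c})\oplus C(\G_{3d})$. The delicate part is the bookkeeping of $q$- and $a$-grading shifts through these manipulations and the verification that the matrix of the change of basis is invertible over the appropriate ring; this ultimately reduces to a finite collection of polynomial identities in the edge variables, but arranging them into chain maps in both directions whose compositions are homotopic to the identity requires the most care.
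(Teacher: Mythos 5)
The paper itself does not prove this proposition; it quotes it from \cite{KR08b,Ras06}, so your argument has to stand on its own. Your treatments of \eqref{eq:CatMOY0}, \eqref{eq:CatMOYI} and \eqref{eq:CatMOYII} are correct and are essentially the standard direct computations: mark removal, a row operation, and then splitting off a Koszul factor whose entry has become $0$ (respectively, a factor resolving a free rank-two module over $\QQ[x_{1},x_{2}]$, which you may then tensor against $C(\G_{2b})$). One small repair is needed: the change-of-basis proposition in the paper is stated only for $\lambda\in\QQ$, whereas you add $(y_{1}-x_{2})$ times a linear entry in \eqref{eq:CatMOYI} and $(y_{1}-t_{1})$ times a linear entry in \eqref{eq:CatMOYII}; this is harmless, but you should record the graded extension of that lemma (the same upper-triangular chain map is invertible for any homogeneous $\lambda\in R$ of the compensating degree).

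The genuine gap is \eqref{eq:CatMOYIII}. Neither side of that relation is a single Koszul complex: each is a direct sum of two Koszul complexes built from different sequences over different total rings, so ``exhibit explicit change-of-basis transformations between the resulting presentations'' is not a well-defined operation; and row operations cannot produce what is needed in any case, since a change of basis preserves the cyclic quotient $E^{t}/(\pp)$ being resolved, while the Hochschild-degree-zero homology of either side of \eqref{eq:CatMOYIII} is visibly not cyclic. The actual content of the categorified MOY III is a direct-sum decomposition of the three-vertex complexes themselves, $C(\G_{3a})\simeq X\oplus q^{2}C(\G_{3c})$ and $C(\G_{3d})\simeq X\oplus q^{2}C(\G_{3b})$ for a common summand $X$ (in Soergel-bimodule language, $B_{1}B_{2}B_{1}\cong B_{121}\oplus B_{1}$ and $B_{2}B_{1}B_{2}\cong B_{121}\oplus B_{2}$, up to shifts), and producing that splitting requires constructing explicit split injections or idempotent endomorphisms --- this is Khovanov and Rozansky's ``Decomposition IV'', by far the hardest of the four, and it is exactly the step your final sentence defers rather than supplies. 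If you want to stay inside your Koszul framework, a workable route is: verify the defining sequences are regular, so each side is a bounded complex of free graded $E(\G)$-modules with homology concentrated in Hochschild degree zero, and then by uniqueness of graded free resolutions reduce \eqref{eq:CatMOYIII} to an isomorphism of the two degree-zero homologies as graded modules; but that module isomorphism is precisely the missing decomposition, so as written the proposal does not contain a proof of \eqref{eq:CatMOYIII}.
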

To compare the isomorphisms in Proposition \ref{prop:CatMOYMoves}
to the relations in Figure \ref{fig:MOYRelations} we introduce the
notation of a ``Laurent series shift functor''. Suppose $F(q,a)\in\mathbb{N}\left\llbracket q^{\pm1},a^{\pm1}\right\rrbracket $,
that is that $F(q,a)=\sum_{i,j\in\ZZ}c_{ij}q^{i}a^{j}$. Also suppose
$M$ is a $\ZZ\times\ZZ$-graded $R$-module with grading shifts denoted
by $q^{i}a^{j}$. Then 
\begin{equation}
F(q,a)M:=\bigoplus_{i,j\in\ZZ}q^{i}a^{j}M^{\oplus c_{ij}}.\label{eq:PolyShiftFunctor}
\end{equation}

With this notation in mind, we can rewrite the isomorphisms (\ref{eq:CatMOY0})
and (\ref{eq:CatMOYI}). First looking at (\ref{eq:CatMOY0}): 
\begin{equation}
C(\Gamma_{0})\simeq\bigoplus_{i=0}^{\infty}q^{2i}(\QQ\oplus aq^{2}\QQ)=\bigoplus_{i=0}^{\infty}q^{2i}(1+aq^{2})\QQ=(1+aq^{2})\sum_{i=0}^{\infty}q^{2i}\QQ=\frac{1+aq^{2}}{1-q^{2}}\QQ.\label{eq:CatMOY0Rewrite}
\end{equation}

Next we consider the case of (\ref{eq:CatMOYI}):
\begin{equation}
C(\G_{1a})\simeq\bigoplus_{i=0}^{\infty}q^{2i}(C(\Gamma_{1b})\oplus aq^{4}C(\Gamma_{1b}))=(1+aq^{4})\sum_{i=0}^{\infty}q^{2i}C(\G_{1b})=\frac{1+aq^{4}}{1-q^{2}}C(\G_{1b}).\label{eq:CatMOYIRewrite}
\end{equation}

We invite the reader to compare the rewritten relations (\ref{eq:CatMOY0Rewrite})
and (\ref{eq:CatMOYIRewrite}) to (MOY 0) and (MOY I) from Figure
\ref{fig:MOYRelations}. The same comparison can be made for (\ref{eq:CatMOYII})
and (\ref{eq:CatMOYIII}) to (MOY 2a) and (MOY 3).

\subsection{Khovanov-Rozansky HOMFLY-PT homology\label{sub:KRHom}}

We now have all of the necessary tools to define Khovanov-Rozansky
HOMFLY-PT homology, or briefly HOMFLY-PT homology. We first define
two $q$-degree 0 maps $\chi_{i}:\arcs\to q^{-2}\wideedge$ and $\chi_{o}:\wideedge\to\arcs$.
Set $E=\QQ[x_{1},x_{2},y_{1},y_{2}]$ to be the edge ring of both
$\arcs$ and $\wideedge$. Then we define $\chi_{i}$ and $\chi_{o}$
by

\begin{equation}\label{eq:ChiInDef}
\begin{diagram}
\arcs & = & a^2q^4E & \rTo{\begin{psmallmatrix}y_1-x_1\\y_2-x_2\end{psmallmatrix}}
& aq^2E\oplus aq^2E & \rTo{\begin{psmallmatrix}x_2-y_2 & y_1-x_1\end{psmallmatrix}} & E\\
\dTo{\chi_i} & &\dTo{1} & & \dTo{\begin{psmallmatrix} y_1-x_2&0\\1&1 \end{psmallmatrix}} & & \dTo{y_1-x_2}\\
q^{-2}\wideedge &
= & a^2q^4E & \rTo_{\begin{psmallmatrix}(y_1-x_1)(y_1-x_2)\\y_1+y_2-x_1-x_2\end{psmallmatrix}} 
& aE \oplus aq^2E & \rTo_{\begin{psmallmatrix}x_1+x_2-y_1-y_2\\(y_1-x_1)(y_1-x_2)\end{psmallmatrix}} & q^{-2} E\\
\end{diagram}
\end{equation}

\begin{equation}\label{eq:ChiOutDef}
\begin{diagram}
\wideedge &
= & a^2q^6E & \rTo{\begin{psmallmatrix}(y_1-x_1)(y_1-x_2)\\y_1+y_2-x_1-x_2\end{psmallmatrix}} 
& aq^2E \oplus aq^4E & \rTo{\begin{psmallmatrix}x_1+x_2-y_1-y_2&(y_1-x_1)(y_1-x_2)\end{psmallmatrix}} &  E\\
\dTo{\chi_o} & &\dTo{y_1-x_2} & & \dTo{\begin{psmallmatrix} 1&0\\-1&y_1-x_2 \end{psmallmatrix}} & & \dTo{1}\\
\arcs & = & a^2q^4E & \rTo_{\begin{psmallmatrix}y_1-x_1\\y_2-x_2\end{psmallmatrix}}
& aq^2E\oplus aq^2E & \rTo_{\begin{psmallmatrix}x_2-y_2 & y_1-x_1\end{psmallmatrix}} & E\\
\end{diagram}
\end{equation}

We now define two bicomplexes of free $E$-modules for the positive
crossing $\posx$ and the negative crossing $\negx$: 

\begin{eqnarray}
C(\posx) & := & C(\arcs)\xrightarrow{\chi_{i}}tq^{-2}C(\wideedge)\label{eq:RouPos}\\
C(\negx) & := & t^{-1}C(\wideedge)\xrightarrow{\chi_{o}}C(\arcs)\label{eq:RouNeg}
\end{eqnarray}

Note that in the above diagram we use the notation $t^{k}C(\Gamma)$
to mean that the complex for $\Gamma$ sits in homological degree
$k$. This is a different homological degree than our Hochschild degree
we introduced earlier. We will simply denote this degree by $\deg_{t}$
and call it the \emph{homological degree. }We will say that $x$ has
(total) degree $\deg(x)=q^{i}a^{j}t^{k}$ if it has quantum degree
$i$, Hochschild degree $k$, and homological degree $j.$ We will
denote the differential in the complexes for the MOY graphs as $d_{g}$
and the differentials in the complexes (built from $\chi_{i}$ and
$\chi_{o}$) associated to crossings as $d_{c}$ . Both $C(\posx)$
and $C(\negx)$ are bicomplexes with commuting differentials $d_{g}$
and $d_{c}$. We define rings associated to each marked tangle diagram
$\tau$ in a similar manner to our constructions for marked MOY graphs.
We do not discuss this further but reference the reader to the table
in Figure \ref{fig:EdgeRingsTang}.

\begin{figure}[h]
\begin{tabular}{|c||c|c|c|}
\hline 
Notation & Name & Ring & Variables Included\tabularnewline
\hline 
\hline 
$E^{t}(\tau)$ & Total ring & $\mathbb{Q}[\mathbf{x},\mathbf{y},\mathbf{t}]$ & incoming, outgoing, internal\tabularnewline
\hline 
$E(\tau)$ & Edge ring & $\mathbb{Q}[\mathbf{x},\mathbf{y}]$ & incoming, outgoing\tabularnewline
\hline 
$E^{i}(\tau)$ & Incoming ring & $\mathbb{Q}[\mathbf{x}]$ & incoming\tabularnewline
\hline 
$E^{o}(\tau)$ & Outgoing ring & $\mathbb{Q}[\mathbf{y}]$ & outgoing\tabularnewline
\hline 
\end{tabular}\caption{Rings associated to a marked tangle diagram $\tau$\label{fig:EdgeRingsTang}}

\end{figure}

We can now build a bicomplex for any tangle diagram (and link diagram)
in a similar manner to what we did in $\S$\ref{sub:MOY-braid-graphs}
for MOY graphs. To a disjoint union of (marked) tangles $\tau=\tau_{1}\sqcup\tau_{2}$
we associate the bicomplex of $E(\tau)$-modules, 
\[
C(\tau):=C(\tau_{1})\otimes_{\QQ}C(\tau_{2})
\]

Where the tensor product of bicomplexes is defined by considering
a bicomplex as a complex of complexes and then applying the tensor
product of chain complexes. Similarly if we are gluing two tangles
$\tau_{1}$ and $\tau_{2}$ at the marked points $\zz=z_{1},...,z_{k}$
in such a way that the orientations are consistent, then we define
a bicomplex of $E(\tau_{1}\cup_{\zz}\tau_{2})$-modules 
\[
C(\tau_{1}\cup_{\zz}\tau_{2})=C(\tau_{1})\otimes_{\QQ[\zz]}C(\tau_{2})
\]

We omit the rest of the details in this case, and leave it to the
reader to compare with the analogous conventions for marked MOY graphs.
Now let $\beta\in\Br_{n}$ be a braid with $n$ strands. We can mark
$\beta$ in such a way that we partition it into arcs and crossings
of the form $\posx$ or $\negx$ and we label the endpoints and markings
in a similar manner to our conventions for marked MOY graphs. Therefore
we can use the rules of disjoint unions and gluing of tangles to write
a bicomplex $C(\beta)$ of $E(\beta)$-modules. In this case, $E(\beta)=\QQ[\xx,\yy]$
where $|\xx|=|\yy|=n.$

We now describe the construction of the HOMFLY-PT homology of a link
$L.$ Suppose that $\beta\in\Br_{n}$ is a braid representative of
$L,$ that is $L$ is the circular closure of $\beta$ in $\mathbb{R}^{3}$.
Then we can describe the bicomplex $C(L_{\beta})=C(\beta)\otimes_{\QQ[\xx,\yy]}C(1_{n})$,
where $1_{n}$ denotes the identity braid (oriented downwards) with
the top endpoints labeled by $\xx$ and the bottom endpoints labeled
by $\yy$. Here we use the notation $L_{\beta}$ to remember the choice
of braid representative. We refer the reader to Figure \ref{fig:braidClosure}
for an example of this decomposition of a braid closure.

\begin{figure}[h]
\begin{tikzpicture}
\draw (1,0)--(0,1); \draw (0,0)--(0.4,0.4); \draw (0.6,0.6)--(1,1);
\draw (1,1)--(0,2); \draw (0,1)--(0.4,1.4); \draw (0.6,1.6)--(1,2);
\draw[->] (1,2)--(0,3); \draw (0,2)--(0.4,2.4); \draw[->] (0.6,2.6)--(1,3);
\draw plot [smooth] coordinates {(0,3)(0.75,3.5)(1.75,3.5)(2.25,1.75)(1.75,-.5)(0.75,-.5)(0,0)};
\draw plot [smooth] coordinates {(1,3)(1.5,3.25)(1.75,1.5)(1.5,-0.25)(1,0)};
\node at (0.75,-1) {$L_\beta$};
\draw (-.1,0)--(.1,0);\draw (0.9,0)--(1.1,0);
\draw (-.1,3)--(.1,3);\draw (0.9,3)--(1.1,3);
\node at (-.2,.2) {$x_1$};\node at (1.2,.2) {$x_2$};
\node at (-.2,2.8) {$y_1$};\node at (1.2,2.8) {$y_2$};

\draw (5,0)--(4,1); \draw (4,0)--(4.4,0.4); \draw (4.6,0.6)--(5,1);
\draw (5,1)--(4,2); \draw (4,1)--(4.4,1.4); \draw (4.6,1.6)--(5,2);
\draw[->] (5,2)--(4,3); \draw (4,2)--(4.4,2.4); \draw[->] (4.6,2.6)--(5,3);
\node at (3.8,.2) {$x_1$};\node at (5.2,.2) {$x_2$};
\node at (3.8,2.8) {$y_1$};\node at (5.2,2.8) {$y_2$};
\node at (4.5,-1) {$\beta$};

\draw[->] (7.5,3)--(7.5,0); \draw[->] (8.5,3)--(8.5,0);
\node at (7.25,.2) {$x_1$};\node at (8.75,.2) {$x_2$};
\node at (7.25,2.8) {$y_1$};\node at (8.75,2.8) {$y_2$};
\node at (8,-1) {$1_2$};
\end{tikzpicture}

\caption{A link presented as a braid closure and the constituent tangles.\label{fig:braidClosure}}
\end{figure}
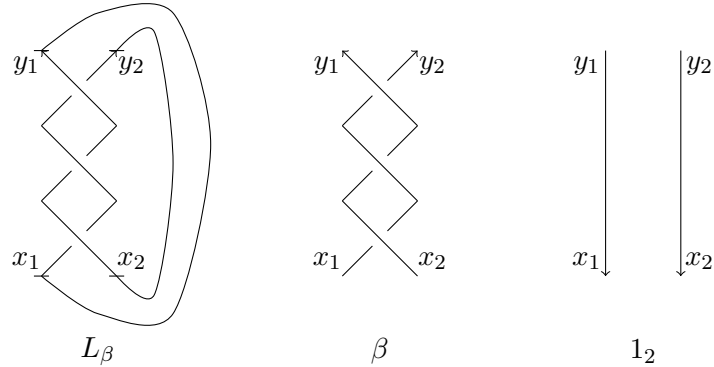

\begin{defn}
\label{def:KRHom}Suppose $L$ is a link with braid representative
$\beta\in\Br_{n}$. The \emph{HOMFLY-PT} homology of $L_{\beta}$
is $\HH(L_{\beta})=H_{d_{c*}}(H_{d_{g}}(C(L_{\beta})))$.\end{defn}
\begin{rem}
$\HH(L_{\beta})$, \emph{as defined above, arises as the $E^{2}$-page
of a spectral sequence. Let $L$ be an $n$-component link. It is
easily shown that the $E^{\infty}$-page of that spectral sequence
is the homology of the $n$-component unlink (up to a grading shift).
In particular, $H_{d_{c}}(C(L))$ is isomorphic to the $E^{\infty}$-page.}\end{rem}
\begin{thm}
[\cite{KR08b}]\label{thm:KRHom} Suppose $\beta\in\Br_{n}$ and
$\beta'\in\Br_{n'}$ are two braid representatives of a link $L$.
Then $\HH(L_{\beta})\cong\HH(L_{\beta'})$ up to a grading shift.
Furthermore, suppose the Poincaré series (see (\ref{eq:PoincareSeries}))
of $\HH(L_{\beta})$ is given by 
\[
\PP(L_{\beta})=\sum_{i,j,k\in\ZZ}d_{i,j,k}q^{i}a^{j}t^{k},\mbox{where }d_{i,j,k}=\dim_{\QQ}(\HH(L_{\beta}))_{i,j,k}
\]

then 
\[
\left.\PP(L_{\beta})\right|_{t=-1}=\sum_{i,j,k\in\ZZ}d_{i,j,k}q^{i}a^{j}(-1)^{k}=P(L_{\beta})
\]

\end{thm}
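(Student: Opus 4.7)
The plan is to establish the two assertions separately: first invariance of $\HH(L_\beta)$ under change of braid representative $\beta$ (up to overall grading shift), then the identification of the specialization $\PP(L_\beta)|_{t=-1}$ with $P(L_\beta)$ via the MOY model. Both rest on reducing global statements to the local moves already catalogued in Proposition~\ref{prop:CatMOYMoves}.

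For invariance, I would appeal to Markov's theorem: any two braid representatives of $L$ are related by a finite sequence of braidlike isotopies, conjugations $\beta \mapsto \gamma\beta\gamma^{-1}$, and stabilizations $\beta \mapsto \beta\sigma_n^{\pm 1}$. Braidlike isotopy invariance amounts to the Reidemeister IIa and III moves performed inside a braid, and these follow from Proposition~\ref{prop:CatMOYMoves}: the cube of resolutions for the two sides of R-IIa is built from $C(\Gamma_{2a})$ and $C(\Gamma_{2b})$, and one uses (\ref{eq:CatMOYII}) together with Gaussian elimination on the mapping cones defined in (\ref{eq:RouPos})-(\ref{eq:RouNeg}) to match the two sides; similarly R-III uses (\ref{eq:CatMOYIII}). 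Conjugation invariance is immediate from the cyclicity of the trace implicit in the braid closure construction, i.e.\ the isomorphism $C(\gamma)\otimes_{\QQ[\mathbf{x},\mathbf{y}]} C(\beta)\otimes_{\QQ[\mathbf{x},\mathbf{y}]} C(1_n) \simeq C(\beta)\otimes_{\QQ[\mathbf{x},\mathbf{y}]} C(\gamma)\otimes_{\QQ[\mathbf{x},\mathbf{y}]} C(1_n)$ after passing to the closure. Stabilization is the most delicate: here one uses the MOY-I isomorphism (\ref{eq:CatMOYI}), applied to the extra strand forming a curl with the final crossing, together with the explicit form of $\chi_i,\chi_o$ in (\ref{eq:ChiInDef})-(\ref{eq:ChiOutDef}), to show that up to an overall shift in $q,a,t$ the complexes $\HH(L_\beta)$ and $\HH(L_{\beta\sigma_n^{\pm 1}})$ agree.

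For the decategorification, I would unwind the definition of $\PP(L_\beta)$. The bicomplex $C(L_\beta)$ is, at each crossing, a mapping cone between $C(\arcs)$ and a $t$-shifted copy of $C(\wideedge)$ with appropriate $q$-shifts, so taking graded Euler characteristic in the $t$-direction (i.e.\ setting $t=-1$) yields, at each crossing:
\begin{align*}
\PP(\posx)|_{t=-1} &= \PP(\arcs)|_{t=-1} - q^{-2}\PP(\wideedge)|_{t=-1},\\
\PP(\negx)|_{t=-1} &= \PP(\wideedge)|_{t=-1} - \PP(\arcs)|_{t=-1}.
\end{align*}
These are precisely the weighted resolutions of Figure~\ref{fig:MOYRelations} (up to an overall sign that bookkeeps $(-1)^{\nu(\sigma)}$). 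Expanding over all states of the link diagram reduces the computation to evaluating $\PP(\Gamma)|_{t=-1}$ for each closed MOY graph $\Gamma$ obtained as a full resolution of $\beta$. The rewritten relations (\ref{eq:CatMOY0Rewrite})-(\ref{eq:CatMOYIRewrite}), together with the analogous reformulations of (\ref{eq:CatMOYII})-(\ref{eq:CatMOYIII}), show that $\PP(\Gamma)|_{t=-1}$ satisfies the same local relations as $\bar P(\Gamma)$. By Theorem~\ref{thm:MOYPoly}, those relations suffice to evaluate any such closed MOY graph, so $\PP(L_\beta)|_{t=-1} = \bar P(L_\beta) = P(L_\beta)$.

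The main obstacle is the stabilization step: one must check not only that the MOY-I relation categorifies correctly, but that the specific $\chi_i,\chi_o$ maps entering the cone at the stabilizing crossing are compatible with the direct-sum decomposition produced by (\ref{eq:CatMOYI}), so that Gaussian elimination collapses one summand entirely and leaves an isomorphic complex (after the appropriate grading shift) on the other. This is where the particular algebraic choices in (\ref{eq:ChiInDef})-(\ref{eq:ChiOutDef}) become essential; the remaining steps, while combinatorially involved, are essentially bookkeeping once the MOY-style local isomorphisms are established.
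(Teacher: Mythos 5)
This theorem is not proved in the paper at all: it is imported verbatim from \cite{KR08b}, so the relevant comparison is with Khovanov--Rozansky's original argument (in the form also used by Rasmussen), and your sketch follows exactly that route --- Markov's theorem plus the categorified MOY relations for the braid moves and stabilization, then graded Euler characteristic against the MOY state sum for the $t=-1$ specialization. As an outline it is correct, but note that the two steps you defer are where essentially all of the content of \cite{KR08b} lives: the object-level decompositions in Proposition \ref{prop:CatMOYMoves} concern only the resolutions, and by themselves they give neither Reidemeister IIa/III nor Markov II invariance. One must additionally show that the differentials $\chi_{i},\chi_{o}$ of (\ref{eq:ChiInDef})--(\ref{eq:ChiOutDef}), assembled into the cones (\ref{eq:RouPos})--(\ref{eq:RouNeg}), are compatible with those direct-sum decompositions so that Gaussian elimination actually collapses the cube; you flag this for stabilization, but it is equally (in fact more) laborious for Reidemeister III, and for Markov II one must also track the unequal grading shifts produced by positive versus negative stabilization, which is precisely why the theorem only asserts equality up to a grading shift.

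On the decategorification half, your argument is the standard one and works, with one sign slip: since $C(\negx)=t^{-1}C(\wideedge)\xrightarrow{\chi_{o}}C(\arcs)$ places $C(\arcs)$ in homological degree $0$, the specialization is $\PP(\negx)|_{t=-1}=\PP(\arcs)|_{t=-1}-\PP(\wideedge)|_{t=-1}$, not its negative; this cannot be absorbed into a global sign (it would contribute $(-1)^{n_{-}}$), but with the corrected formula the state expansion matches the weights of Figure \ref{fig:MOYRelations} exactly, and the rewritten relations (\ref{eq:CatMOY0Rewrite})--(\ref{eq:CatMOYIRewrite}) together with Theorem \ref{thm:MOYPoly} finish the identification $\PP(L_{\beta})|_{t=-1}=\bar{P}(L_{\beta})=P(L_{\beta})$ as you say. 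One should also remark that passing from the complex $(H_{d_{g}}(C(L_{\beta})),d_{c*})$ to its homology preserves the $t$-graded Euler characteristic because $d_{c}$ preserves the $(q,a)$-bidegree and each tridegree is finite dimensional; that is the hypothesis that makes ``set $t=-1$'' legitimate.
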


\section{HOMFLY-PT homology for general link diagrams\label{sec:GenDia}}

In this section we study what happens when we consider general link
diagrams in the construction of HOMFLY-PT homology. We will see that
not all Reidemiester moves are respected, and that in general HOMFLY-PT
homology is only an invariant up to braidlike isotopy.

\subsection{Virtual crossings and marked MOY graphs}

We start by introducing virtual crossings into the framework of (marked)
MOY graphs. We will not fully discuss virtual knot theory here, but
rather refer the reader to Kauffman \cite{Ka99}. Virtual crossings
were first considered as a tool in HOMFLY-PT and $\mathfrak{sl}$$(n)$
homologies by Khovanov and Rozansky in \cite{KR07}, and studied further
by the author and Rozansky in \cite{AbRoz14}.

A \emph{virtual MOY graph} is a MOY graph where we allow the underlying
graph to be nonplanar. Such a graph can always be drawn where the
intersections forced by the projection onto the plane are transverse
double points. An example of this is given in Figure \ref{fig:VirCrossing}.
To the marked virtual crossing graph we associate the following complex
of free $E(\vir)=\QQ[x_{1},x_{2},y_{1},y_{2}]$-modules:

\begin{equation}
C(\vir)=\begin{bmatrix}y_{1}-x_{2}\\
y_{2}-x_{1}
\end{bmatrix}_{E(\vir)}=q^{4}E(\vir)\xrightarrow{\,\,A\,\,}q^{2}E(\vir)\oplus q^{2}E(\vir)\xrightarrow{\,\,B\,\,}E(\vir)\label{eq:VirComplex}
\end{equation}
where 
\[
A=\begin{pmatrix}y_{1}-x_{2}\\
y_{2}-x_{1}
\end{pmatrix},\quad B=\begin{pmatrix}x_{1}-y_{2} & y_{1}-x_{2}\end{pmatrix}.
\]

Note that $C(\vir)$ resembles $C(\arcs)$ except for a transposition
of $x_{1}$ and $x_{2}$ in the definition of the complexes. In this
sense, we can think of a virtual crossing as being a permutation of
strands with no additional crossing data or vertex at the intersection.

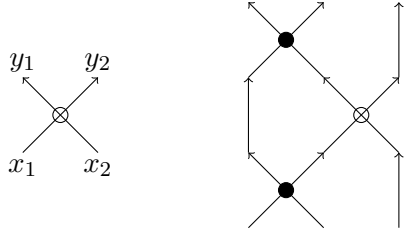
\begin{figure}[h]
\begin{tikzpicture}
\draw[->] (-3,1)--(-2,2); \draw[->] (-2,1)--(-3,2); \draw (-2.5,1.5) circle (0.1cm);
\node at (-3,0.8) {$x_1$};\node at (-2,0.8) {$x_2$};
\node at (-3,2.2) {$y_1$};\node at (-2,2.2) {$y_2$};

\draw[->] (0,0)--(1,1); \draw[->] (1,0)--(0,1); \draw[->] (2,0)--(2,1);
\draw[fill=black] (0.5,0.5) circle (0.1cm);
\draw[->] (0,1)--(0,2); \draw[->] (1,1)--(2,2); \draw[->] (2,1)--(1,2);
\draw (1.5,1.5) circle (0.1cm);
\draw[->] (0,2)--(1,3); \draw[->] (1,2)--(0,3); \draw[->] (2,2)--(2,3);
\draw[fill=black] (0.5,2.5) circle (0.1cm);

\end{tikzpicture}

\caption{A (marked) virtual crossing and an example of a virtual MOY graph
\label{fig:VirCrossing}}
\end{figure}

\begin{prop}
\label{prop:virtualMOYmoves} The moves in Figure \ref{fig:VirMOYMoves}
preserve the homotopy equivalence type of $C(\G)$.
\end{prop}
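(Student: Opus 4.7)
The plan is to verify each of the moves in Figure \ref{fig:VirMOYMoves} by presenting both sides as Koszul complexes over a common total ring and then exhibiting a homotopy equivalence via mark removal (Lemma \ref{lem:MarkRemoval}) combined with changes of basis on the Koszul generators. For each move, I would first choose markings on the local diagrams $\G$ and $\G'$ so that both sides share the same boundary labels $\xx, \yy$, placing additional internal marks $\tt$ exactly on the edges that differ between them. Applying the gluing formula \eqref{eq:KoszulUnionandGlue} together with the elementary complexes \eqref{eq:ArcComplex}, \eqref{eq:VertexComplex}, and \eqref{eq:VirComplex}, the complexes $C(\G)$ and $C(\G')$ then become Koszul complexes in variables $\xx, \yy, \tt$ whose sequence entries are linear terms $y_i - x_{\sigma(i)}$ or $t_i - \ast$ coming from arcs and virtual crossings, together with one pair of entries $(y_1 + y_2 - x_1 - x_2,\,(y_1-x_1)(y_1-x_2))$ for each MOY vertex.

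For the purely virtual moves (virtual Reidemeister I, II, III), all Koszul generators are linear. In this case the sequences on either side of the move record exactly the same boundary permutation of $\xx$ and $\yy$, and a direct application of mark removal to eliminate the $\tt$ variables collapses both $C(\G)$ and $C(\G')$ to the same Koszul complex of $E(\G) = E(\G')$-modules.

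For the mixed (semi-virtual) moves, where a virtual crossing slides past a MOY vertex, the Koszul sequences on both sides retain the quadratic generator inherited from the MOY vertex. The crucial observation here is that $(y_1 - x_1)(y_1 - x_2)$ is \emph{symmetric} in $x_1$ and $x_2$, so the transposition of variables induced by sliding the virtual crossing to the other side of the MOY vertex does not alter this generator, while it does transform the accompanying linear generator $y_1 + y_2 - x_1 - x_2$ trivially. After applying mark removal to eliminate the internal $\tt$ variables on both sides, the remaining Koszul sequences differ by an invertible upper-triangular row operation, giving the desired homotopy equivalence by the change-of-basis proposition.

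The main obstacle will be the bookkeeping in the semi-virtual cases: after introducing internal marks we must track simultaneously the changes of basis, the mark removals, and the effect on the resulting transition matrix to confirm that it remains invertible over $E(\G)$. No new homological tools are needed beyond those already developed; the verification ultimately rests on the symmetry of the MOY quadratic generator under the transposition of incoming variables and on the fact that the linear Koszul generators associated to virtual crossings and arcs are permutation-equivalent.
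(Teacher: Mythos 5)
Your proposal is correct and follows essentially the same route as the paper: present both sides of each move as Koszul complexes over the total ring, then reduce them to a common complex via mark removal (Lemma \ref{lem:MarkRemoval}) and changes of basis, which is exactly how the paper proves (VMOY2a) before leaving the remaining cases to the reader as similar. The one point to watch is that when the virtual crossing transposes the \emph{outgoing} variables, the quadratic generator $(y_1-x_1)(y_1-x_2)$ is preserved only modulo the linear generator $y_1+y_2-x_1-x_2$, so the needed row operation has a ring-element (not rational scalar) coefficient; your ``invertible upper-triangular row operation'' step accommodates this, matching what the paper does implicitly.
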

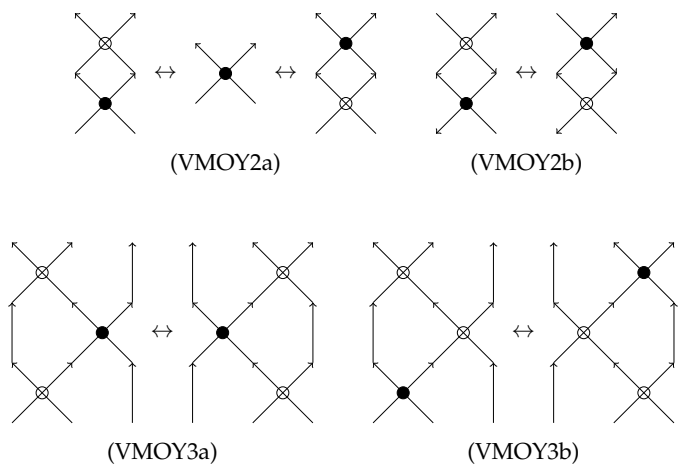
\begin{figure}[h]
\scalebox{.8}{ \begin{tikzpicture} \draw[->] (-2,0)--(-1,1); \draw[->] (-1,0)--(-2,1); \draw[fill=black] (-1.5,0.5) circle (0.1cm); \draw[->] (-2,1)--(-1,2); \draw[->] (-1,1)--(-2,2); \draw (-1.5,1.5) circle (0.1cm); \node at (-0.5,1) {$\leftrightarrow$}; \draw[->](0,.5)--(1,1.5); \draw[->](1,.5)--(0,1.5); \draw[fill=black](.5,1) circle (0.1cm); \node at (1.5,1) {$\leftrightarrow$}; \draw[->] (2,0)--(3,1); \draw[->] (3,0)--(2,1); \draw (2.5,0.5) circle (0.1cm); \draw[->] (2,1)--(3,2); \draw[->] (3,1)--(2,2); \draw[fill=black] (2.5,1.5) circle (0.1cm); \node at (0.5,-0.5) {(VMOY2a)};
\draw[<-] (4,0)--(5,1); \draw[->] (5,0)--(4,1); \draw[fill=black] (4.5,0.5) circle (0.1cm); \draw[->] (4,1)--(5,2); \draw[<-] (5,1)--(4,2); \draw (4.5,1.5) circle (0.1cm); \node at (5.5,1) {$\leftrightarrow$}; \draw[<-] (6,0)--(7,1); \draw[->] (7,0)--(6,1); \draw (6.5,0.5) circle (0.1cm); \draw[->] (6,1)--(7,2); \draw[<-] (7,1)--(6,2); \draw[fill=black] (6.5,1.5) circle (0.1cm); \node at (5.5,-0.5) {(VMOY2b)}; \end{tikzpicture}}

\vspace{0.3in}

\scalebox{.8}{\begin{tikzpicture}

\draw[->] (6,0)--(7,1);\draw[->] (7,0)--(6,1);\draw[->] (8,0)--(8,1);
\draw(6.5,0.5) circle (0.1cm);
\draw [->](6,1)--(6,2); \draw[->] (7,1)--(8,2); \draw[->] (8,1)--(7,2);
\draw[fill=black](7.5,1.5) circle (0.1cm);
\draw[->] (6,2)--(7,3); \draw[->] (7,2)--(6,3); \draw[->] (8,2)--(8,3);
\draw (6.5,2.5) circle (0.1cm);
\draw (8.5,1.5) node {$\leftrightarrow$};
\draw[->] (9,0)--(9,1);\draw[->](10,0)--(11,1);\draw[->](11,0)--(10,1);
\draw(10.5,0.5) circle (0.1cm);
\draw[->] (9,1)--(10,2); \draw[->] (10,1)--(9,2); \draw [->](11,1)--(11,2);
\draw[fill=black](9.5,1.5) circle (0.1cm);
\draw[->] (9,2)--(9,3); \draw[->] (10,2)--(11,3); \draw[->] (11,2)--(10,3);
\draw(10.5,2.5) circle (0.1cm);

\draw[->] (12,0)--(13,1);\draw [->](13,0)--(12,1);\draw[->] (14,0)--(14,1);
\draw[fill=black](12.5,0.5) circle (0.1cm);
\draw [->](12,1)--(12,2); \draw[->] (13,1)--(14,2); \draw[->] (14,1)--(13,2);
\draw(13.5,1.5) circle (0.1cm);
\draw[->] (12,2)--(13,3); \draw[->] (13,2)--(12,3); \draw[->] (14,2)--(14,3);
\draw(12.5,2.5) circle (0.1cm);
\draw (14.5,1.5) node {$\leftrightarrow$};
\draw [->](15,0)--(15,1);\draw [->](16,0)--(17,1);\draw [->](17,0)--(16,1);
\draw(16.5,0.5) circle (0.1cm);
\draw [->](15,1)--(16,2); \draw [->](16,1)--(15,2); \draw [->](17,1)--(17,2);
\draw(15.5,1.5) circle (0.1cm);
\draw[->] (15,2)--(15,3); \draw[->] (16,2)--(17,3); \draw[->] (17,2)--(16,3);
\draw[fill=black](16.5,2.5) circle (0.1cm);

\node at (8.5,-0.5) {(VMOY3a)};
\node at (14.5,-0.5) {(VMOY3b)};

\end{tikzpicture}}

\caption{Virtual MOY moves \label{fig:VirMOYMoves}}
\end{figure}

\begin{proof}
We shall not prove all of the isomorphisms due to the similarity of
their proofs. We shall prove (VMOY2a) and leave the rest of the proofs
to the reader. The left hand side of (VMOY2a) is presented as the
Koszul complex of the sequence $y_{2}-t_{1},y_{1}-t_{2},t_{1}+t_{2}-x_{1}-x_{2},(t_{1}-x_{1})(t_{1}-x_{2}).$
Let $w=y_{1}+y_{2}-x_{1}-x_{2}$, then 
\begin{equation}
\begin{aligned}\begin{bmatrix}y_{2}-t_{1}\\
y_{1}-t_{2}\\
t_{1}+t_{2}-x_{1}-x_{2}\\
(t_{1}-x_{1})(t_{1}-x_{2})
\end{bmatrix}_{\QQ[\xx,\yy,\mathbf{\mathbf{t}]}} & \simeq & \begin{bmatrix}y_{2}-t_{1}\\
y_{1}-t_{2}\\
w\\
(t_{1}-x_{1})(t_{1}-x_{2})
\end{bmatrix}_{\QQ[\xx,\yy,\mathbf{\mathbf{t}]}} & \simeq & \begin{bmatrix}y_{1}-t_{2}\\
w\\
(y_{1}-x_{1})(y_{1}-x_{2})
\end{bmatrix}_{\QQ[\xx,\yy,t_{2}]}\\
 &  &  & \simeq & \begin{bmatrix}w\\
(y_{1}-x_{1})(y_{1}-x_{2})
\end{bmatrix}_{\QQ[\xx,\yy]}.
\end{aligned}
\end{equation}

The first isomorphism is a change of basis and the other three isomorphisms
are mark removals. The last term is the Koszul complex for $C(\wideedge).$
The second isomorphism in (VMOY2a) is proven by a similar argument.
\end{proof}
For any \emph{virtual link diagram} $D$, that is a link diagram with
virtual crossings, we can repeat the procedure from Section \ref{sub:KRHom}
to build a bicomplex of $E(D)$-modules. We now record the additional
``virtual'' Reidemeister moves.
\begin{prop}
The moves in Figure \ref{fig:VirReidMoves} preserve the homotopy
equivalence type of $C(D)$. The isomorphisms (VR1),(VR2a),(VR2b),
(VR3) and (SVR) are called \emph{virtual Reidemeister moves,} and
the isomorphisms (Z1$\pm$) and (Z2$\pm$) are called \emph{Z-moves.}
\end{prop}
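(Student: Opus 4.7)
The plan is to verify each move by writing both sides as Koszul complexes on the respective total rings and reducing via mark removal (Lemma~\ref{lem:MarkRemoval}) and changes of basis, exactly as in the proof of Proposition~\ref{prop:virtualMOYmoves}. The key observation driving nearly all of the virtual Reidemeister moves is that the complex $C(\vir)$ is the Koszul complex of the linear sequence $y_1-x_2,\,y_2-x_1$, so every virtual crossing simply implements the transposition of labels on its two strands and contributes no new generator to the total ring beyond an arc would.

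First I would dispatch the purely virtual moves (VR1), (VR2a), (VR2b), and (VR3). For each, I would mark the diagrams on both sides identically on the boundary, present each side as a Koszul complex on the appropriate $\mathbb{Q}[\xx,\yy,\mathbf{t}]$, and carry out mark removals. On the left-hand side of (VR2a), for instance, the two virtual crossings contribute a sequence of four linear relations $y_1-t_2,\,y_2-t_1,\,t_2-x_1,\,t_1-x_2$; three successive applications of Lemma~\ref{lem:MarkRemoval} collapse this to $[y_1-x_1,\,y_2-x_2]_{\mathbb{Q}[\xx,\yy]}$, which is $C(\arc\sqcup\arc)$. (VR1), (VR2b), and (VR3) are analogous linear computations. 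The semi-virtual move (SVR), whichever orientation is drawn, then follows because the real crossing complex $C(\posx)$ or $C(\negx)$ is built out of $C(\arcs)$ and $C(\wideedge)$, both of which are Koszul over linear relations in the boundary variables; flanking the diagram by virtual crossings on the top and bottom reparametrizes those boundary variables by the induced transposition, and the resulting bicomplex is isomorphic to the crossing bicomplex with the relabeled boundary.

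Next I would handle the Z-moves (Z1$\pm$) and (Z2$\pm$), which mix a virtual crossing with a real crossing (or with a MOY vertex). For these I would exploit that $C(\posx)$ and $C(\negx)$ are defined as the mapping cones of $\chi_i$ and $\chi_o$ between $C(\arcs)$ and $tq^{-2}C(\wideedge)$ (respectively $t^{-1}C(\wideedge)$ and $C(\arcs)$). Thus it suffices to prove the corresponding move for each of the two MOY resolutions of the real crossing and verify the chain maps $\chi_i,\chi_o$ from (\ref{eq:ChiInDef})--(\ref{eq:ChiOutDef}) intertwine the two resulting isomorphisms. The MOY-level statements on each fiber are precisely instances of the moves (VMOY2a), (VMOY2b), (VMOY3a), (VMOY3b) from Proposition~\ref{prop:virtualMOYmoves}; so the remaining content of the Z-moves is the naturality of those isomorphisms under $\chi_i$ and $\chi_o$.

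I expect the main obstacle to lie exactly here: the Z-moves are genuine statements about bicomplexes, not just about the single differential $d_g$, so producing the homotopy equivalence requires lifting the MOY-level equivalences of Proposition~\ref{prop:virtualMOYmoves} to maps commuting with $d_c$ as well. In practice this will come down to picking the homotopy equivalences from that proposition \emph{explicitly} (the sequence of mark removals and the one change-of-basis step have explicit inverse formulas) and comparing both compositions with $\chi_i$ and $\chi_o$ written in the bases induced by those reductions. Because both $\chi_i,\chi_o$ and the relevant mark-removal isomorphisms are determined by the boundary variables alone, and because virtual crossings merely permute those variables, the two compositions must agree up to an overall automorphism of the underlying free module, giving the required isomorphism of bicomplexes after closing up by the remaining $d_c$ differentials.
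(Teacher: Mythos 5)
Your proposal follows essentially the same route as the paper: Koszul-complex mark removal for the purely virtual moves (VR1)--(VR3), exactly as in the proof of (VMOY2a), and resolving the real crossing via $\chi_i,\chi_o$ and invoking the virtual MOY moves for (SVR) and the Z-moves, which is precisely the strategy the paper cites from \cite{AbRoz14}. The only small slip is your count for (VR2a): with two internal variables and four linear relations, two applications of Lemma~\ref{lem:MarkRemoval} (not three) already collapse the sequence to $[y_1-x_1,\,y_2-x_2]$.
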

\begin{figure}[h]
\scalebox{.8}{\begin{tikzpicture}
\draw[->] (0,0.5)--(1,1.5); \draw[->] (1,0.5)--(0,1.5); \draw (0.5,1) circle (0.1cm);
\draw plot [smooth] coordinates {(1,1.5)(1.5,1)(1,0.5)};
\node at (1.75,1) {$\leftrightarrow$};
\draw[->] (2.25,0.5)--(2.25,1.5);
\node at (1.125,-.5) {(VR1)};

\draw (4,0)--(5,1); \draw (5,0)--(4,1); \draw (4.5,0.5) circle (0.1cm);
\draw[->] (4,1)--(5,2); \draw[->] (5,1)--(4,2); \draw (4.5,1.5) circle (0.1cm);
\node at (5.25,1) {$\leftrightarrow$};
\draw[->] (5.75,0.5)--(5.75,1.5); \draw[->] (6.75,0.5)--(6.75,1.5);
\node at (5.375,-.5) {(VR2a)};

\draw (8.5,0)--(9.5,1); \draw[<-] (9.5,0)--(8.5,1); \draw (9,0.5) circle (0.1cm);
\draw (8.5,1)--(9.5,2); \draw[->] (9.5,1)--(8.5,2); \draw (9,1.5) circle (0.1cm);
\node at (9.75,1) {$\leftrightarrow$};
\draw[->] (10.25,0.5)--(10.25,1.5); \draw[<-] (11.25,0.5)--(11.25,1.5);
\node at (9.875,-.5) {(VR2b)};
\end{tikzpicture}}

\vspace{0.3in}
\scalebox{.8}{
\begin{tikzpicture}
\draw (7.5,0)--(8.5,1);\draw (8.5,0)--(7.5,1);\draw (9.5,0)--(9.5,1);
\draw 	(8.1,0.5) arc (0:360:0.1);
\draw (7.5,1)--(7.5,2); \draw (8.5,1)--(9.5,2); \draw (9.5,1)--(8.5,2);
\draw 	(9.1,1.5) arc (0:360:0.1);
\draw[->] (7.5,2)--(8.5,3); \draw[->] (8.5,2)--(7.5,3); \draw[->] (9.5,2)--(9.5,3);
\draw 	(8.1,2.5) arc (0:360:0.1);
\draw (10,1.5) node {$\leftrightarrow$};
\draw (10.5,0)--(10.5,1); \draw (11.5,0)--(12.5,1); \draw(12.5,0)--(11.5,1);
\draw 	(12.1,0.5) arc (0:360:0.1);
\draw(10.5,1)--(11.5,2); \draw(11.5,1)--(10.5,2); \draw (12.5,1)--(12.5,2);
\draw 	(11.1,1.5) arc (0:360:0.1);
\draw[->] (10.5,2)--(10.5,3); \draw[->] (11.5,2)--(12.5,3); \draw[->] (12.5,2)--(11.5,3);
\draw 	(12.1,2.5) arc (0:360:0.1); 
\draw	 (10,-0.5) node {(VR3)};

\draw (14,0)--(15,1);\draw (15,0)--(14,1);\draw (16,0)--(16,1);
\draw 	(14.6,0.5) arc (0:360:0.1);
\draw (14,1)--(14,2); 
\draw (15,1)--(16,2); \draw (16,1)--(15.6,1.4); \draw (15.4,1.6)--(15,2);
\draw[->] (14,2)--(15,3); \draw[->] (15,2)--(14,3); \draw[->] (16,2)--(16,3);
\draw 	(14.6,2.5) arc (0:360:0.1);
\draw (16.5,1.5) node {$\leftrightarrow$};
\draw (17,0)--(17,1); \draw (18,0)--(19,1); \draw (19,0)--(18,1);
\draw 	(18.6,0.5) arc (0:360:0.1);
\draw (17,1)--(18,2); \draw (18,1)--(17.6,1.4); \draw (17.4,1.6)--(17,2);  
\draw (19,1)--(19,2);
\draw[->] (17,2)--(17,3); \draw[->] (18,2)--(19,3); \draw[->] (19,2)--(18,3);
\draw 	(18.6,2.5) arc (0:360:0.1); 
\draw	 (16.5,-0.5) node {(SVR)};
\end{tikzpicture}}

\vspace{.3in}

\scalebox{.8}{\begin{tikzpicture}
\draw (0,0)--(1,1); \draw (1,0)--(0,1); \draw (0.5,0.5) circle (0.1cm);
\draw[->] (0,1)--(1,2); \draw (1,1)--(0.6,1.4); \draw[->] (.4,1.6)--(0,2);
\node at (1.5,1) {$\leftrightarrow$};
\draw (2,0)--(3,1); \draw (3,0)--(2.6,0.4); \draw (2.4,0.6)--(2,1);
\draw[->](2,1)--(3,2); \draw[->] (3,1)--(2,2); \draw (2.5,1.5) circle (0.1cm); 

\draw (4,0)--(5,1); \draw (5,0)--(4,1); \draw (4.5,0.5) circle (0.1cm);
\draw[->] (5,1)--(4,2); \draw (4,1)--(4.4,1.4); \draw[->] (4.6,1.6)--(5,2);
\node at (5.5,1) {$\leftrightarrow$};
\draw (7,0)--(6,1); \draw (6,0)--(6.4,0.4); \draw (6.6,0.6)--(7,1);
\draw[->](6,1)--(7,2); \draw[->] (7,1)--(6,2); \draw (6.5,1.5) circle (0.1cm); 

\draw (8,0)--(9,1); \draw[<-] (9,0)--(8,1); \draw (8.5,0.5) circle (0.1cm);
\draw[->] (8,1)--(9,2); \draw (9,1)--(8.6,1.4); \draw (8.4,1.6)--(8,2);
\node at (9.5,1) {$\leftrightarrow$};
\draw (10,0)--(11,1); \draw[<-] (11,0)--(10.6,0.4); \draw (10.4,0.6)--(10,1);
\draw[->](10,1)--(11,2); \draw (11,1)--(10,2); \draw (10.5,1.5) circle (0.1cm); 

\draw (12,0)--(13,1); \draw[<-] (13,0)--(12,1); \draw (12.5,0.5) circle (0.1cm);
\draw[->] (13,1)--(12,2); \draw (12,1)--(12.4,1.4); \draw (12.6,1.6)--(13,2);
\node at (13.5,1) {$\leftrightarrow$};
\draw[<-]  (15,0)--(14,1); \draw (14,0)--(14.4,0.4); \draw (14.6,0.6)--(15,1);
\draw(14,1)--(15,2); \draw[->] (15,1)--(14,2); \draw (14.5,1.5) circle (0.1cm);

\node at (1.5,-0.5) {(Z1$+$)};
\node at (5.5,-0.5) {(Z1$-$)};
\node at (9.5,-0.5) {(Z2$+$)};
\node at (13.5,-0.5) {(Z2$-$)};
\end{tikzpicture}}

\caption{Virtual Reidemeister moves and Z-moves \label{fig:VirReidMoves}}
\end{figure}
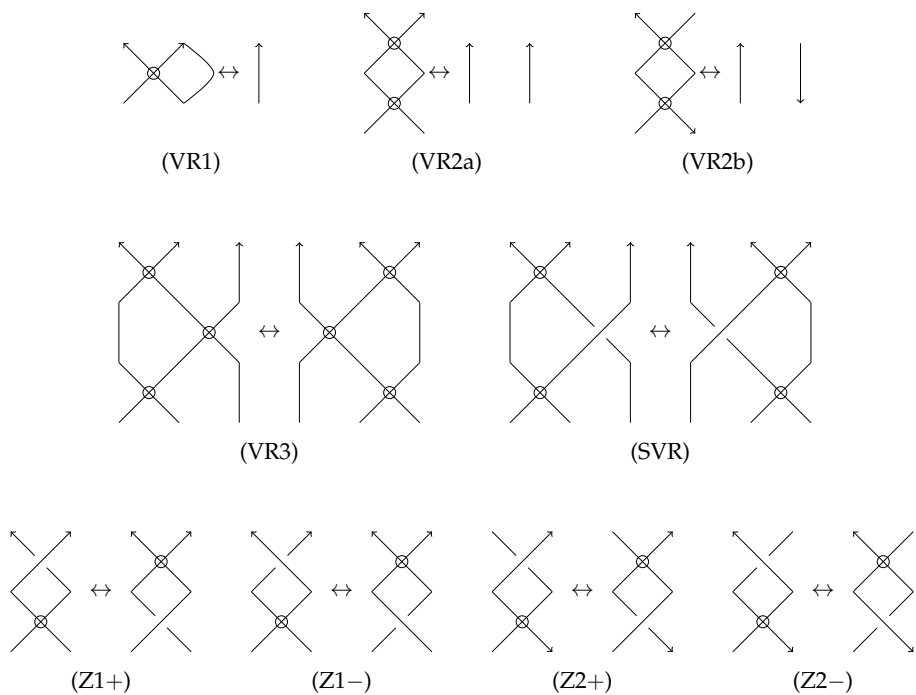

The proofs of (VR1), (VR2a), (VR2b), and (VR3) are similar to the
proof presented for (VMOY2a) in Proposition \ref{prop:virtualMOYmoves}.
The proofs for (SVR), and the Z-moves are included in \cite{AbRoz14}.
They are easy to see after resolving the crossings in terms of MOY
graphs and using the virtual MOY moves.

\subsection{Failure of Reidemeister IIb for $\HH(D)$ }

Now with the introduction of virtual crossings, we can study the exact
outcome of allowing general link diagrams in the computation of $\HH(D).$
We first look at the analogue of (MOYIIb) from Figure \ref{fig:MOYRelations}.
\begin{lem}
\label{lem:CatMOYIIb} 

Let $\tilde{C}(\arcsud)=\frac{q^{2}+aq^{6}}{1-q^{2}}C(\arcsud)$,
then the following diagram is commutative:

\begin{diagram}
\tilde{C}(\arcsud)&\rTo{\iota}&C(\edgeedge)&\rTo{\pi}&C(\viredge)\\
\dTo{\simeq}&&\dTo{\simeq}&&\dTo{\simeq}\\
\tilde{C}(\arcsud)&\rTo_{\begin{psmallmatrix}*\\1\end{psmallmatrix}}&C(\viredge)\oplus \tilde{C}(\arcsud)&\rTo_{\begin{psmallmatrix}1&0\end{psmallmatrix}}&C(\viredge)
\end{diagram}

where $\iota$ includes $\tilde{C}(\arcsud)$ as a subcomplex of $C(\edgeedge)$.
$\pi$ projects $C(\edgeedge)$ onto its quotient complex $C(\viredge)$
and $*$ represents some map from $\tilde{C}(\arcsud)$ to $C(\viredge)$.
\end{lem}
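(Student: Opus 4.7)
The plan is first to present $C(\edgeedge)$ as an explicit Koszul complex. The diagram $\edgeedge$ is obtained by gluing two elementary MOY vertex graphs along two shared internal edges; combining the presentation from \eqref{eq:VertexComplex} with the gluing rule \eqref{eq:MOYGluingRel} and then eliminating the internal variables using mark removal (Lemma \ref{lem:MarkRemoval}), I obtain a finite Koszul complex of $E(\edgeedge)$-modules of the form $[p_1,p_2,p_3,p_4]_{E(\edgeedge)}$ with explicit polynomial entries $p_i$ encoding the two ``elementary symmetric'' relations from each MOY vertex.

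Next I would perform a sequence of changes of basis (i.e., invertible row operations on the Koszul sequence) to reveal a direct sum decomposition. The diagram $\edgeedge$ contains two internal edges running parallel between its two vertices, i.e.\ an internal bigon. After one or two basis changes that decouple the ``bigon sequence'' from the remaining Koszul relations, this bigon portion of the complex can be simplified by the categorified MOY I relation \eqref{eq:CatMOYI}. This produces a summand homotopy equivalent to $\frac{1+aq^4}{1-q^2}C(\arcsud)$, which after the overall $q^2$ shift becomes $\tilde{C}(\arcsud)=\frac{q^2+aq^6}{1-q^2}C(\arcsud)$; the complementary Koszul complex is recognized (using the relations \eqref{eq:ArcComplex} and \eqref{eq:VirComplex}, after comparing the surviving sequence) as $C(\viredge)$.

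Finally I would verify commutativity of the diagram. The map $\iota$ is the natural inclusion into $C(\edgeedge)$ associated with filling in the internal bigon, and $\pi$ is the natural surjection obtained by replacing a MOY vertex with a virtual crossing (using the chain map $\chi_o$ of \eqref{eq:ChiOutDef} on one of the vertex factors). Tracking these two natural maps through the sequence of row operations establishing $C(\edgeedge)\simeq C(\viredge)\oplus \tilde{C}(\arcsud)$ shows that in the decomposition, $\iota$ becomes $\begin{psmallmatrix}\ast\\ 1\end{psmallmatrix}$ (the identity on the $\tilde{C}(\arcsud)$ summand together with a possibly nonzero component into $C(\viredge)$) and $\pi$ becomes $\begin{psmallmatrix}1 & 0\end{psmallmatrix}$, so the two squares of the diagram commute strictly.

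The routine part is the change-of-basis computation; the main obstacle is bookkeeping. Specifically, one must carefully track the quantum and Hochschild shifts through the geometric series $\frac{1}{1-q^2}$ arising from the bigon, and verify that the natural maps $\iota$ and $\pi$ (which are defined before any choice of decomposition) really do coincide, after the explicit homotopy equivalences chosen in the previous step, with the standard matrix inclusion and projection of a direct sum. This is where the ``$\ast$'' entry in the inclusion appears: it records the fact that $\iota$ is only determined up to the choice of splitting, but its $\tilde{C}(\arcsud)$-component is the identity.
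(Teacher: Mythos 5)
Your proposal does not go through, and the failures are at the heart of what the lemma is actually asserting. First, the claim that gluing plus mark removal yields a \emph{finite} Koszul complex $[p_{1},p_{2},p_{3},p_{4}]_{E(\edgeedge)}$ is false: the two vertices of $\edgeedge$ are joined by an internal digon, so after Lemma \ref{lem:MarkRemoval} eliminates one internal mark the other internal variable survives (no row is linear in it, and no scalar row operation can make it so), and indeed $C(\edgeedge)$ cannot be homotopy equivalent to any finite-rank free complex of $E(\edgeedge)$-modules, since its homology contains the infinitely many shifted copies of $H(C(\arcsud))$ recorded by the $\frac{1}{1-q^{2}}$ in $\tilde{C}(\arcsud)$. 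Second, and more seriously, the digon here is an antiparallel bigon between two \emph{distinct} vertices; this is the MOY IIb configuration, not the MOY I configuration $\Gamma_{1a}$ (a single vertex with a returning loop), so the categorified MOY I relation (\ref{eq:CatMOYI}) simply does not apply to "the bigon portion," and changes of basis with scalar coefficients preserve the Koszul form rather than revealing a direct sum. If an argument of the kind you sketch were valid, it would just as readily "prove" the naive categorified MOY IIb, $C(\edgeedge)\simeq C(\arcsacross)\oplus\frac{1+aq^{6}}{1-q^{2}}C(\arcsud)$, which Theorem \ref{thm:CatRIIb} shows is false. Your sketch contains no ingredient that forces the virtual summand $C(\viredge)$ rather than the smoothing, nor the coefficient $q^{2}+aq^{6}$ rather than $1+aq^{6}$; the phrase "after the overall $q^{2}$ shift" is exactly where the content of the lemma lives, and it is unsupported.

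The missing ingredient is the virtual crossing filtration, which is also where $\iota$, $\pi$, the $q^{2}$, and the legitimate use of MOY I come from. Writing the left vertex as $C_{+}(\wideedge)=\Cone(G)$ for the virtual saddle map $G:C(\vir)\to q^{2}C(\arcs)$ exhibits $q^{2}C(\arcedge)$ (left strand, with the arc closing the right vertex into the genuine MOY I configuration, hence $\simeq q^{2}\frac{1+aq^{4}}{1-q^{2}}C(\arcsud)=\tilde{C}(\arcsud)$) as a subcomplex of $C(\edgeedge)$, with quotient $C(\viredge)$; these are the maps $\iota$ and $\pi$. In particular your identification of $\pi$ with "$\chi_{o}$ on one of the vertex factors" is wrong: $\chi_{o}$ of (\ref{eq:ChiOutDef}) maps a vertex to the oriented smoothing $\arcs$, not to a virtual crossing. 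What then requires proof is that this filtration \emph{splits} and that, under an explicit splitting, $\iota$ and $\pi$ become the standard inclusion and projection; the paper does this by presenting $C_{+-}(\edgeedge)$ as a convolution of all-virtual diagrams (\ref{eq:MOYIIbCon}), invoking Lemma \ref{lem:MultCoMult} (notably $\hat{G}\simeq0$ and the filtered structure of $\Cone(\hat{F})$), and then performing Gaussian elimination (Proposition \ref{prop:GaussianElim}) while tracking the induced maps via Proposition \ref{prop:TwistedCon}. Your plan to "track the natural maps through the row operations" has no counterpart to these steps, so both the decomposition and the commutativity of the diagram remain unproved in your approach.
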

We prove Lemma \ref{lem:CatMOYIIb} in Section \ref{sub:AppenLemmaPf}
using the framework of virtual crossing filtrations from \cite{AbRoz14}.
We now state the main result of this section.
\begin{thm}
\label{thm:CatRIIb} The Reidemeister IIb isomorphism fails: $C(\posxnegxd)\not\simeq C(\arcsacross).$
In particular, $C(\posxnegxd)\simeq tq^{-2}C(\virnegxd).$\end{thm}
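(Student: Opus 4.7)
The plan is to expand $C(\posxnegxd)$ as an iterated mapping cone using the cone definitions of the positive and negative crossings, and then invoke Lemma \ref{lem:CatMOYIIb} to collapse the bulk of the resulting bicomplex by Gaussian elimination.

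First, I would expand both crossings via $C(\posx) = C(\arcs) \xrightarrow{\chi_i} tq^{-2} C(\wideedge)$ and $C(\negx) = t^{-1}C(\wideedge) \xrightarrow{\chi_o} C(\arcs)$. Tensoring along the shared strand (and with the free outer strands) presents $C(\posxnegxd)$ as the total complex of a $2 \times 2$ bicomplex whose four corners correspond to the four combinations of resolutions: the ``both smoothed'' corner is $C(\arcsacross)$ (since the anti-parallel strands force the oriented smoothings to curve back into a cup--cap); the two hybrid corners carry one smoothing and one MOY vertex, yielding (after straightening via the virtual MOY moves of Proposition \ref{prop:virtualMOYmoves}) $t^{-1}C(\arcedge)$ and $tq^{-2}C(\edgearc)$; and the ``both MOY vertex'' corner is $q^{-2}C(\edgeedge)$. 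The internal differentials between these corners come from $\chi_i \otimes 1$ and $1 \otimes \chi_o$ (up to sign).

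The crux is then to apply Lemma \ref{lem:CatMOYIIb} to the central corner, rewriting $q^{-2}C(\edgeedge) \simeq q^{-2}C(\viredge) \oplus q^{-2}\tilde{C}(\arcsud)$. The commutative diagram in the lemma shows that the bicomplex differentials entering and leaving this corner---namely the maps induced by $\chi_i$ and $\chi_o$---become, under the isomorphism, matrix maps of the form $\binom{*}{1}$ and $(1 \; 0)$, which carry identity components on the $\tilde{C}(\arcsud)$ summand. Two successive Gaussian eliminations along these identity components cancel the $q^{-2}\tilde{C}(\arcsud)$ summand against the two hybrid corners $t^{-1}C(\arcedge)$ and $tq^{-2}C(\edgearc)$; the scaling factor $\frac{q^2+aq^6}{1-q^2}$ built into $\tilde{C}$ ensures a further cancellation against the ``both smoothed'' corner $C(\arcsacross)$. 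What survives is the $q^{-2}C(\viredge)$ summand with an additional homological shift $t$ inherited from the positive-crossing cone, so the final complex is $tq^{-2}C(\viredge)$, which the virtual MOY moves of Proposition \ref{prop:virtualMOYmoves} and the Z-moves of Figure \ref{fig:VirReidMoves} identify with $tq^{-2}C(\virnegxd)$. The inequality $C(\posxnegxd) \not\simeq C(\arcsacross)$ will then be witnessed concretely by the diagram of Example \ref{exa:RIIbFail}.

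The main obstacle is the bookkeeping in the Gaussian-elimination step: one must verify that, after decomposing $C(\edgeedge)$ as in Lemma \ref{lem:CatMOYIIb}, the bicomplex differentials restricted to $\tilde{C}(\arcsud)$ really are identity maps (up to units), rather than maps that merely resemble them. The anti-parallel orientations force implicit virtual crossings into the hybrid corners, and threading $\chi_i$ and $\chi_o$ through these virtual crossings demands careful tracking of gradings at each step. Most delicate will be the cancellation of the finite-rank complex $C(\arcsacross)$ against the (infinite-rank) $\tilde{C}(\arcsud)$: the scaling factor $\frac{q^2+aq^6}{1-q^2}$ and all grading shifts must align exactly for this final elimination to be honest.
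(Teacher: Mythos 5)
Your overall strategy matches the paper's: expand the two crossings as cones, present $C(\posxnegxd)$ as a three-term total complex, apply Lemma \ref{lem:CatMOYIIb} to the $q^{-2}C(\edgeedge)$ corner, and Gaussian eliminate. Your identification of the hybrid corners as $t^{-1}C(\arcedge)$ and $tq^{-2}C(\edgearc)$, and of the ``both MOY vertex'' corner as $q^{-2}C(\edgeedge)$, is also correct.

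However, there is a genuine gap: the ``both smoothed'' corner is \emph{not} $C(\arcsacross)$, it is $C(\arcarc)$, i.e., two boundary arcs \emph{plus a disjoint circle}. Trace it through: the oriented smoothing of the positive crossing is $\arcs$ (two parallel up arcs), the oriented smoothing of the negative crossing is $\arcsd$ (two parallel down arcs), and when you glue along the shared middle strand the internal up-arc and internal down-arc close into a circle. This circle is the entire reason the proof goes through. By (MOY 0), $C(\arcarc)=\frac{1+aq^2}{1-q^2}C(\arcsud)$ is an infinite-rank direct sum of shifted arc complexes, and this matches exactly the $tC(\arcarc)$ direct summand that appears when $tq^{-2}C(\edgearc)$ is simplified via (MOY I), so the second Gaussian elimination cancels them cleanly. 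Your version has a finite-rank complex $C(\arcsacross)$ (a genuine cup-cap, which is a \emph{different bimodule} from $C(\arcsud)$) sitting where an infinite-rank complex belongs; the elimination you describe cannot close up, which is precisely the tension you yourself flag as ``delicate'' in your last paragraph. Conflating the IIb target $\arcsacross$ with the all-smooth resolution $\arcarc$ actually misses the conceptual crux of the theorem: they differ by exactly one circle, and that circle is the obstruction. Separately, a smaller point: what survives after Gaussian elimination is not the single term $tq^{-2}C(\viredge)$ but the two-term complex $q^{-2}C(\viredge)\xrightarrow{1\otimes\chi_o}tq^{-2}C(\arcsud)$, which you then factor as $tq^{-2}C(\vir)\otimes\bigl(t^{-1}C(\wideedged)\xrightarrow{\chi_o}C(\arcsd)\bigr)\simeq tq^{-2}C(\virnegxd)$; you need the $\chi_o$ arrow to reassemble the negative crossing, and the virtual MOY moves alone do not do this.
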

\begin{proof}
We first write $C(\posxnegxd)$ using (\ref{eq:RouPos}) and (\ref{eq:RouNeg}).
\[
C(\posxnegxd)=t^{-1}C(\arcedge)\xrightarrow{\begin{pmatrix}1\otimes\chi_{o}\\
\chi_{i}\otimes1
\end{pmatrix}}C(\arcarc)\oplus q^{-2}C(\edgeedge)\xrightarrow{\begin{pmatrix}\chi_{i}\otimes1 & -1\otimes\chi_{o}\end{pmatrix}}tq^{-2}C(\edgearc)
\]

We use the isomorphisms from Proposition \ref{prop:CatMOYMoves} and
Lemma \ref{lem:CatMOYIIb} to rewrite the complex as 
\[
t^{-1}q^{-2}\tilde{C}(\arcsud)\xrightarrow{\begin{pmatrix}1\\*
*\\*
\end{pmatrix}}q^{-2}\tilde{C}(\arcsud)\oplus C(\arcarc)\oplus q^{-2}C(\viredge)\xrightarrow{\begin{pmatrix}* & 1 & 0\\*
* & * & -1\otimes\chi_{o}
\end{pmatrix}}tC(\arcarc)\oplus tq^{-2}C(\arcsud)
\]
where the maps $*$ are irrelevant to our discussion. After performing
Gaussian elimination (see Section \ref{sub:TwistedCom}) on the complex
above, we are left with 

\begin{align*}
C(\posxnegxd) & \simeq q^{-2}C(\viredge)\xrightarrow{1\otimes\chi_{o}}tq^{-2}C(\arcsud)\\
 & \simeq tq^{-2}C(\vir)\otimes(t^{-1}C(\wideedged)\xrightarrow{\chi_{o}}C(\arcsd))\\
 & \simeq tq^{-2}C(\virnegxd).
\end{align*}

Where the tensor product above is over the appropriate ring of internal
variables.\end{proof}
\begin{example}
\label{exa:RIIbFail}We now give an example where the local failure
of Reidemeister IIb gives a failure of isotopy invariance for a certain
link diagram. Let $D$ be the diagram for the unknot given in Figure
\ref{fig:RIIbFail}, and let $O$ denote the standard diagram of the
unknot as a circle bounding a disc in $\mathbb{R}^{2}$. Then we have
the following chain of isomorphims 
\[
\HH(D)\simeq t^{2}q^{-4}\HH(D')\simeq t^{2}q^{-4}\HH(D'')\simeq t^{2}q^{-4}\HH(D''')
\]

The first isomorphism is given by applying Theorem \ref{thm:CatRIIb}
twice. The second isomorphism following from applying (Z2$-$) from
Proposition \ref{prop:virtualMOYmoves}. The last isomorphism follows
from applying (VR2b) from Proposition \ref{prop:virtualMOYmoves}.
$D'''$ is the diagram of the left-handed trefoil knot, and we computed
$P(D''')$ in Example \ref{exa:LHTrefoil}. This is enough to show
that $\HH(D)\not\simeq\HH(O)$, however it is an easy exercise to
show that 
\[
\HH(D''')=(aq^{2}+t^{2}q^{2}+t^{2}aq^{4})\frac{1+aq^{2}}{1-q^{2}}\QQ\not\simeq\frac{1+aq^{2}}{1-q^{2}}\QQ\simeq\HH(O).
\]

\end{example}
\begin{figure}[h]
\scalebox{.8}{\begin{tikzpicture}  \draw (1,0) node (v7) {}--(0,1); \draw (0,0) node (v11) {}--(0.4,0.4); \draw (0.6,0.6)--(1,1); \draw[->] (1,1)--(0,2) node (v9) {}; \draw (0,1)--(0.4,1.4); \draw[->] (0.6,1.6)--(1,2) node (v1) {}; \draw[->] (3.5,0.5) node (v5) {}--(2.5,1.5); \draw (2.5,0.5)--(2.9,0.9); \draw (3.1,1.1)--(3.5,1.5) node (v3) {}; \draw[<-] (4,0.5) node (v6) {}--(5,1.5) node (v10) {}; \draw[<-] (5,.5) node (v12) {}--(4.6,.9); \draw (4.4,1.1)--(4,1.5) node (v4) {}; \draw (1.5,0.5) node (v8) {}--(2.5,1.5); \draw[<-] (2.5,0.5)--(2.1,0.9); \draw (1.9,1.1)--(1.5,1.5) node (v2) {}; \draw  plot[smooth, tension=.7] coordinates {(v1) (1.3,1.8) (v2)}; \draw  plot[smooth, tension=.7] coordinates {(v3) (3.7,1.7) (v4)}; \draw  plot[smooth, tension=.7] coordinates {(v5) (3.7,0.3) (v6)}; \draw  plot[smooth, tension=.7] coordinates {(v7) (1.3,0.1) (v8)}; \draw  plot[smooth, tension=.7] coordinates {(v9) (1,2.5) (2.5,3) (4,2.5) (v10)}; \draw  plot[smooth, tension=.7] coordinates {(v11) (1,-0.5) (2.5,-1) (4,-0.5) (v12)}; \node at (2.5,-1.5) {$D$};
\draw (7,0) node (v7) {}--(6,1); \draw (6,0) node (v11) {}--(6.4,0.4); \draw (6.6,0.6)--(7,1); \draw[->] (7,1)--(6,2) node (v9) {}; \draw (6,1)--(6.4,1.4); \draw[->] (6.6,1.6)--(7,2) node (v1) {}; \draw[->] (9.5,0.5) node (v5) {}--(8.5,1.5); \draw (8.5,0.5)--(8.9,0.9); \draw (9.1,1.1)--(9.5,1.5) node (v3) {}; \draw[<-] (10,0.5) node (v6) {}--(11,1.5) node (v10) {}; \draw[<-] (11,0.5) node (v12) {}--(10.5,1); \draw (10.5,1)--(10,1.5) node (v4) {}; \draw (7.5,0.5) node (v8) {}--(8.5,1.5); \draw[<-] (8.5,0.5)--(8,1); \draw (8,1)--(7.5,1.5) node (v2) {}; \draw  plot[smooth, tension=.7] coordinates {(v1) (7.3,1.8) (v2)}; \draw  plot[smooth, tension=.7] coordinates {(v3) (9.7,1.7) (v4)}; \draw  plot[smooth, tension=.7] coordinates {(v5) (9.7,0.3) (v6)}; \draw  plot[smooth, tension=.7] coordinates {(v7) (7.3,0.1) (v8)}; \draw  plot[smooth, tension=.7] coordinates {(v9) (7,2.5) (8.5,3) (10,2.5) (v10)}; \draw  plot[smooth, tension=.7] coordinates {(v11) (7,-0.5) (8.5,-1) (10,-0.5) (v12)}; \node at (8.5,-1.5) {$D'$}; \draw (8,1) circle (.1cm); \draw (10.5,1) circle (.1cm);
\draw (1,-6) node (v7) {}--(0,-5); \draw (0,-6) node (v11) {}--(0.4,-5.6); \draw (0.6,-5.4)--(1,-5); \draw[->] (1,-5)--(0,-4) node (v9) {}; \draw (0,-5)--(0.4,-4.6); \draw[->] (0.6,-4.4)--(1,-4) node (v1) {}; \draw[->] (3.5,-5.5) node (v5) {}--(2.5,-4.5); \draw (2.5,-5.5)--(3,-5); \draw (3,-5)--(3.5,-4.5) node (v3) {}; \draw[<-] (4,-5.5) node (v6) {}--(5,-4.5) node (v10) {}; \draw[<-] (5,-5.5) node (v12) {}--(4.5,-5); \draw (4.5,-5)--(4,-4.5) node (v4) {}; \draw (1.5,-5.5) node (v8) {}--(1.9,-5.1); \draw (2.1,-4.9)--(2.5,-4.5); \draw[<-] (2.5,-5.5)--(2,-5); \draw (2,-5)--(1.5,-4.5) node (v2) {}; \draw  plot[smooth, tension=.7] coordinates {(v1) (1.3,-4.2) (v2)}; \draw  plot[smooth, tension=.7] coordinates {(v3) (3.7,-4.3) (v4)}; \draw  plot[smooth, tension=.7] coordinates {(v5) (3.7,-5.7) (v6)}; \draw  plot[smooth, tension=.7] coordinates {(v7) (1.3,-5.9) (v8)}; \draw  plot[smooth, tension=.7] coordinates {(v9) (1,-3.5) (2.5,-3) (4,-3.5) (v10)}; \draw  plot[smooth, tension=.7] coordinates {(v11) (1,-6.5) (2.5,-7) (4,-6.5) (v12)}; \node at (2.5,-7.5) {$D''$}; \draw (3,-5) circle (.1cm); \draw (4.5,-5) circle (.1cm);
\draw (8.2,-5.9) node (v7) {}--(7.2,-4.9);  \draw (7.2,-5.9) node (v11) {}--(7.6,-5.5); \draw (7.8,-5.3)--(8.2,-4.9); \draw[->] (8.2,-4.9)--(7.2,-3.9) node (v9) {}; \draw (7.2,-4.9)--(7.6,-4.5);  \draw[->] (7.8,-4.3)--(8.2,-3.9) node (v1) {}; \draw (8.7,-5.4) node (v8) {}--(9.1,-5); \draw (9.3,-4.8)--(9.7,-4.4) node (v13) {}; \draw[<-] (9.7,-5.4) node (v14) {}--(9.2,-4.9);  \draw (9.2,-4.9)--(8.7,-4.4) node (v2) {}; \node at (8.5,-7.3) {$D'''$}; \draw  plot[smooth, tension=.7] coordinates {(v1) (8.5,-3.7) (v2)}; \draw  plot[smooth, tension=.7] coordinates {(v7) (8.5,-6.1) (v8)}; \draw  plot[smooth, tension=.7] coordinates {(v13) (8.5,-3) (v9)}; \draw  plot[smooth, tension=.7] coordinates {(v11) (8.5,-6.7) (v14)}; \end{tikzpicture}}

\caption{The failure of Reidemeister IIb for an unknot diagram. The above diagrams
all have the same HOMFLY-PT homology up to a grading shift. \label{fig:RIIbFail}}
\end{figure}
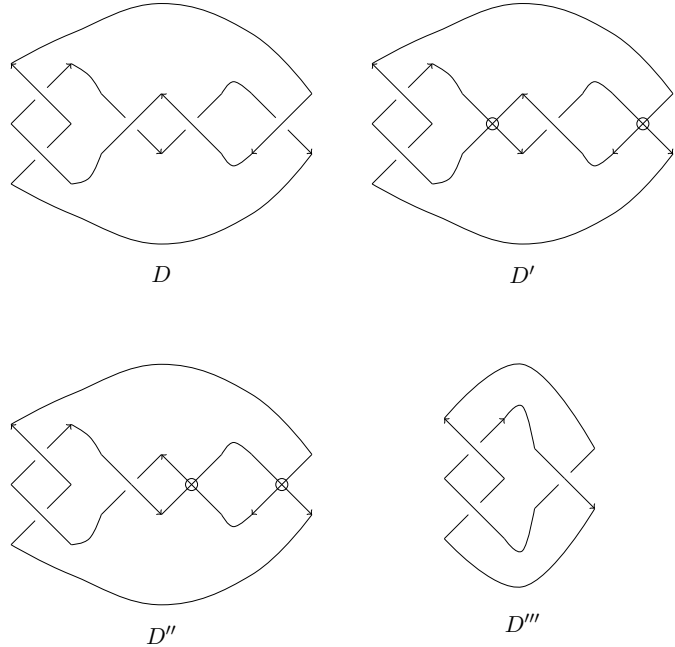

\subsection{Braidlike isotopy and $\HH(L)$}

As we saw in Example \ref{exa:RIIbFail}, HOMFLY-PT homology for general
link diagrams is \emph{not} an isotopy invariant. However, it was
proven in \cite{KR08b} that it was a \emph{braidlike isotopy }invariant.
We now carefully recall the definition of braidlike isotopy
\begin{defn}
\label{def:BraidlikeIso} Two link diagrams $D$ and $D'$ are said
to represent \emph{braidlike isotopic }links\emph{ }if they differ
by a sequence of planar isotopies and the following moves in Figure
\ref{fig:BraidlikeMoves}. Such a sequence of moves will be called
a \emph{braidlike isotopy. }

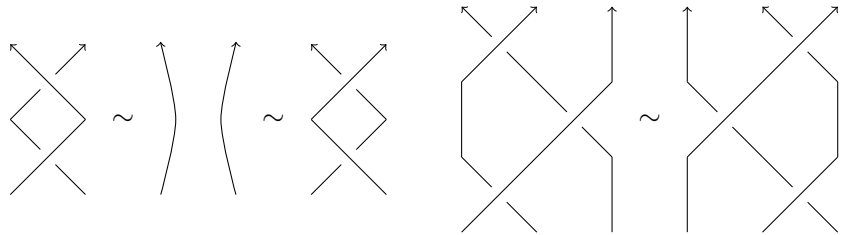
\begin{figure}[h]
\begin{tikzpicture}
\draw(0,0.5)--(1,1.5);\draw(1,0.5)--(0.6,0.9); \draw(0.4,1.1)--(0,1.5);
\draw(0,1.5)--(0.4,1.9);\draw[->](0.6,2.1)--(1,2.5); \draw[->](1,1.5)--(0,2.5);
\node at (1.5,1.5) {$\sim$};
\draw[->] plot [smooth] coordinates {(2,0.5)(2.2,1.5)(2,2.5)};
\draw[->] plot [smooth] coordinates {(3,0.5)(2.8,1.5)(3,2.5)};
\node at (3.5,1.5) {$\sim$};
\draw(4,0.5)--(4.4,0.9);\draw(4.6,1.1)--(5,1.5); \draw(5,0.5)--(4,1.5);
\draw[->](4,1.5)--(5,2.5);\draw(5,1.5)--(4.6,1.9); \draw[->](4.4,2.1)--(4,2.5);

\draw (6,0)--(7,1); \draw (7,0)--(6.6,0.4); \draw (6.4,0.6)--(6,1); \draw (8,0)--(8,1);
\draw (6,1)--(6,2); \draw (7,1)--(8,2); \draw (8,1)--(7.6,1.4); \draw(7.4,1.6)--(7,2);
\draw[->] (6,2)--(7,3); \draw (7,2)--(6.6,2.4); \draw[->] (6.4,2.6)--(6,3); \draw[->] (8,2)--(8,3);
\node at (8.5,1.5) {$\sim$};
\draw(9,0)--(9,1); \draw (10,0)--(11,1); \draw (11,0)--(10.6,0.4);\draw (10.4,0.6)--(10,1);
\draw(9,1)--(10,2); \draw (10,1)--(9.6,1.4); \draw (9.4,1.6)--(9,2); \draw (11,1)--(11,2);
\draw[->](9,2)--(9,3); \draw[->] (10,2)--(11,3); \draw (11,2)--(10.6,2.4);\draw[->] (10.4,2.6)--(10,3);
\end{tikzpicture}

\caption{Braidlike Reidemeister moves \label{fig:BraidlikeMoves}}
\end{figure}
\end{defn}
\begin{thm}
[Khovanov-Rozansky\cite{KR08b}]\label{thm:BriadlikeIso} Let $D$
and $D'$ be two braidlike isotopic link diagrams, then $\HH(D)\simeq\HH(D')$.
\end{thm}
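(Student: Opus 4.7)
The plan is to check invariance locally under each of the moves in Figure~\ref{fig:BraidlikeMoves}. Invariance under planar isotopy is automatic, since the bicomplex $C(D)$ is built from the combinatorial data of the marked diagram via the tangle disjoint-union and gluing operations, which only depend on the cyclic order of endpoints at each crossing/vertex. So the real content is the two Reidemeister moves in that figure: the braidlike IIa move (two oppositely-signed crossings stacked with both strands oriented upward) and the braidlike III move (three strands all oriented upward). The method in each case is standard: expand each side as an iterated cone of complexes of MOY graphs, use Proposition~\ref{prop:CatMOYMoves} to rewrite the MOY-graph complexes, and then apply Gaussian elimination (see \S\ref{sub:TwistedCom}) to cancel acyclic subcomplexes until the surviving complex is the one associated to the other side.

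For the braidlike IIa move, I would first use (\ref{eq:RouPos}) and (\ref{eq:RouNeg}) to expand $C(\posx \cdot \negx)$ (two crossings glued along their middle strands) into a square of four MOY-graph complexes: two copies with a single wide edge, one copy with no wide edges (giving $C(\arcs \otimes \arcs) \simeq C(1_{2})$), and the corner $C(\wideedge \otimes \wideedge)$ obtained by stacking two wide edges. The MOY~IIa isomorphism (\ref{eq:CatMOYII}) identifies this last corner with $(1+q^{2})C(\wideedge)$, and two of the resulting summands pair off via isomorphisms with the adjacent single-wide-edge corners. Careful Gaussian elimination cancels these three pairs, leaving $C(\arcs \otimes \arcs)$ in the required homological degree, which is the complex of the identity tangle.

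For the braidlike III move I would run the same machine on a larger cube. Each side is an iterated cone over a $3$-cube of resolutions whose vertices are MOY graphs indexed by the number of wide edges (zero through three). The two cubes match vertex-by-vertex at heights $0$, $2$, and $3$ either directly or after a single application of (\ref{eq:CatMOYII}). The essential new input sits at height~$1$: there the relation (\ref{eq:CatMOYIII}), namely $C(\Gamma_{3a}) \oplus q^{2}C(\Gamma_{3b}) \simeq q^{2}C(\Gamma_{3c}) \oplus C(\Gamma_{3d})$, is exactly the combinatorial identity needed to identify the two middle layers across the move. A sequence of Gaussian eliminations using these isomorphisms, combined with the RII cancellations already established, produces a homotopy equivalence between the two sides.

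The main obstacle is the bookkeeping in the RIII step: after substituting (\ref{eq:CatMOYII}) and (\ref{eq:CatMOYIII}), one must verify that the connecting maps $\chi_{i}, \chi_{o}$ in the cube are compatible with the chosen isomorphisms, so that the Gaussian eliminations can actually be performed coherently rather than just at the level of underlying modules. One way to sidestep most of this calculation is to pass to the Rouquier/Soergel-bimodule presentation: the complexes $C(\sigma_{i}^{\pm 1})$ are precisely the Rouquier complexes of the elementary braid generators, and braidlike RII and RIII are the defining homotopy equivalences among Rouquier complexes, already established in the literature. This is essentially the approach used in \cite{KR08b}, so in practice I would either cite their argument directly or translate it into the virtual-crossing framework developed here.
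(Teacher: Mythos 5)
Your proposal is sound and is essentially the approach the paper takes: Theorem \ref{thm:BriadlikeIso} is imported from \cite{KR08b} without proof, and your sketch (locality of the construction, expansion into MOY resolutions, the relations of Proposition \ref{prop:CatMOYMoves}, Gaussian elimination, and ultimately deferring the Reidemeister III bookkeeping to the Rouquier-complex argument of \cite{KR08b}) is exactly the standard argument being cited. The only quibble is cosmetic: in the braidlike IIa step the square has four MOY corners, and after applying (\ref{eq:CatMOYII}) one performs two Gaussian eliminations (two cancelling pairs involving three corners), not three, before the identity-tangle complex survives.
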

Braidlike isotopy is an important notion in studying links in the
solid torus. It is a well-known fact that two braid closures in the
solid torus give isotopic links if and only if the braids are equivalent,
or rather if they are braidlike isotopic. Audoux and Fiedler in \cite{AF}
give a deformation of Khovanov homology which detects braidlike isotopy
of links in $\mathbb{R}^{3}$. Their invariant decategorifies to a
deformation of the Jones polynomial which can be computed using a
Kauffman bracket-like relation. In the case of closed braid diagrams,
their invariant corresponds with the homology theory studied in \cite{APS04}
and the decategorification corresponds with the polynomial invariant
studied in  \cite{HP89}. 

We can now interpret Example \ref{exa:RIIbFail} in the following
manner: The unknot diagram $D$ shown in Figure \ref{fig:RIIbFail}
is isotopic, but not braidlike isotopic to the standard unknot diagram
$O$. In particular, there is no sequence of Reidemeister moves transforming
$D$ to $O$ which does not contain the Reidemeister IIb move. In
this sense, we see that $\HH(D)$ can detect nonbraidlike isotopy.
Note that $\HH(D)$ is isomorphic to the homology of the left-handed
trefoil knot (after two negative Reidemeister I moves), though clearly
$D$ is not a diagram for the left-handed trefoil knot. Therefore,
this viewpoint of $\HH(D)$ may not be useful in determing isotopy
type of general diagrams, but can be very useful when we know the
two diagrams are of the same isotopy type and we wish to determine
if they are of the same braidlike isotopy type. 

We can also see easily that the Poincaré series of $\HH(D)$ and $\HH(O)$
differ. Direct calculation shows that $\PP(D)=(aq^{2}+t^{2}q^{2}+t^{2}aq^{4})\PP(O)$.
This implies that we may be able to detect nonbraidlike isotopies
on the level of the MOY calculus. As we will see in the next section,
after a deformation of the MOY theory, this is indeed the case.

\section{Decategorification of $\HH(D)$ for general link diagrams\label{sec:Decat}}

In this section we study the decategorification of $\HH(D),$ which
we will denote as $P^{b}(D).$ As we saw in Example \ref{exa:RIIbFail},
$P^{b}(D)\neq P(D)$ in general. In particular, when this occurs,
this implies that $D$ is not braidlike isotopic to a closed braid
presentation of a link. We will end this section with a note on virtual
links and give an explicit example of where $P^{b}(D)$ is not invariant
under the virtual exchange move, which implies that $\HH(D)$ cannot
be extended to a virtual link invariant.

\subsection{A deformation of the HOMFLY-PT polynomial}

Let $D$ be a link diagram, then we will define our deformed HOMFLY-PT
polynomial as 
\begin{equation}
P^{b}(D)=\left.\PP(D)\right|_{t=-1}\in\ZZ(q,a).\label{eq:DefPB}
\end{equation}

\begin{thm}
\label{thm:PBPoly} Let $D$ and $D'$ be two link diagrams which
are braidlike isotopic. Then $P^{b}(D)=P^{b}(D').$ $P^{b}$ satisfies
the following skein relation:
\begin{equation}
qP^{b}(\posx)-q^{-1}P^{b}(\negx)=(q-q^{-1})P^{b}(\arcs)\label{eq:BraidlikeSkein}
\end{equation}
\end{thm}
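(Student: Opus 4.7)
The plan is to reduce both claims to the bicomplex description $C(D)$ of $\HH(D)$, using Theorem \ref{thm:BriadlikeIso} for braidlike invariance and the crossing resolution formulas (\ref{eq:RouPos})--(\ref{eq:RouNeg}) for the skein relation. Braidlike invariance is immediate: since $P^b(D)=\PP(D)|_{t=-1}$ depends only on the triply graded vector space $\HH(D)$, and braidlike Reidemeister II and III moves preserve writhe and component count (so no grading shift is introduced), Theorem \ref{thm:BriadlikeIso} yields $\PP(D)=\PP(D')$ and hence $P^b(D)=P^b(D')$ on the nose.

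For the skein relation, the first step is to observe that $P^b(D)=\chi_t(H_{d_g}(C(D)))$, where $\chi_t$ denotes the $t$-graded Euler characteristic valued in $q,a$-graded Laurent series. This holds because the crossing differential $d_c$ has $t$-degree $+1$ and preserves both the $q$- and $a$-gradings, so $\chi_t$ is preserved when passing from $H_{d_g}(C(D))$ to $\HH(D)=H_{d_c}(H_{d_g}(C(D)))$. Now let $D_+,D_-,D_0,D_1$ be link diagrams identical outside a small disk where they look like $\posx,\negx,\arcs,\wideedge$ respectively, and extend $P^b$ to the ``mixed'' diagram $D_1$ (with one MOY vertex) by the same formula. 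The tensor product structure of $C(D)$ combined with (\ref{eq:RouPos}) and (\ref{eq:RouNeg}) identifies $C(D_+)$ with the mapping cone $C(D_0)\xrightarrow{\chi_i}tq^{-2}C(D_1)$ and $C(D_-)$ with $t^{-1}C(D_1)\xrightarrow{\chi_o}C(D_0)$. Since $d_g$ preserves the $t$-grading, these cone descriptions descend to $H_{d_g}$, and computing $\chi_t$ of the resulting cones gives
\[
P^b(D_+) \;=\; P^b(D_0)-q^{-2}P^b(D_1), \qquad P^b(D_-) \;=\; P^b(D_0)-P^b(D_1).
\]
The linear combination $qP^b(D_+)-q^{-1}P^b(D_-)$ then cancels both occurrences of $P^b(D_1)$ and leaves $(q-q^{-1})P^b(D_0)$, which is the desired skein relation.

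The main technical subtlety is that each $t$-graded piece of $C(D)$ can be infinite-dimensional over $\QQ$ in a fixed $q$-degree when $D$ has closed components (cf.\ (\ref{eq:CatMOY0})), so $\chi_t$ must be interpreted as a formal element of an appropriate completion of $\ZZ[q^{\pm 1},a^{\pm 1}]$, and one must verify that for link diagrams the resulting element actually lies in $\ZZ(q,a)$ and coincides with $\PP(D)|_{t=-1}$. Once this formalism is in place the rest of the argument is a routine mapping cone computation with grading shifts.
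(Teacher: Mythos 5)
Your proof is correct and rests on the same essential input as the paper's argument (the two-term cone descriptions (\ref{eq:RouPos}) and (\ref{eq:RouNeg}) of $C(\posx)$ and $C(\negx)$), but it is packaged differently. The paper constructs an explicit homological-degree-one chain map $\psi\colon tq^{-1}C(\negx)\to qC(\posx)$ given by the identity on the $C(\wideedge)$ summands, uses Gaussian elimination to identify $\Cone(\psi)$ with the two-step complex $qC(\arcs)\xrightarrow{-\chi_o\chi_i} tq^{-1}C(\arcs)$, and only then passes to Poincar\'e series. You decategorify first: computing $\chi_t$ directly from the crossing bicomplexes gives $P^b(D_+)=P^b(D_0)-q^{-2}P^b(D_1)$ and $P^b(D_-)=P^b(D_0)-P^b(D_1)$, and the $P^b(D_1)$ terms cancel in the skein combination $qP^b(D_+)-q^{-1}P^b(D_-)$. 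The two proofs record the same cancellation, one at the chain level and one at the Euler-characteristic level; your route is slightly more elementary, since it avoids verifying that $\psi$ is a chain map and carrying out Gaussian elimination, at the cost of not producing the chain-level (categorified) skein triangle that falls out of the paper's argument. The finiteness caveat you flag at the end is the right one: $\chi_t$ must be applied to $H_{d_g}(C(D))$ rather than to $C(D)$ itself, whose bidegree pieces are infinite rank; finite-dimensionality of each $(q,a)$-bidegree of $H_{d_g}(C(D))$ is implicit in the paper's definition of $\PP(D)$.
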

\begin{proof}
The first statement is an immediate corollary of Theorem \ref{thm:BriadlikeIso}.
For the second part of the statement, note that we have a map $\psi:tq^{-1}C(\negx)\to qC(\posx)$
of homological degree 1 given by

\begin{equation*}
\begin{diagram}
tq^{-1}C(\negx) =&q^{-1}C(\wideedge) & \rTo{\chi_o} & tq^{-1}C(\arcs)\\
\dTo{\psi} && \rdTo{1} &\\
qC(\posx)=&qC(\arcs) & \rTo{\chi_i}& tq^{-1}C(\wideedge).\\
\end{diagram}
\end{equation*}

The mapping cone of $\psi$, after Gaussian elimination, is homotopy
equivalent to 
\[
qC(\arcs)\xrightarrow{-\chi_{o}\chi_{i}}tq^{-1}C(\arcs)
\]

and therefore 
\begin{equation}
\mbox{Cone}(tq^{-1}C(\negx)\xrightarrow{\psi}C(\posx))\simeq(qC(\arcs)\xrightarrow{-\chi_{o}\chi_{i}}tq^{-1}C(\arcs))\label{eq:CatSkein}
\end{equation}

Properties of Poincaré series and (\ref{eq:CatSkein}) gives us the
relation (\ref{eq:BraidlikeSkein}) as desired.\end{proof}
\begin{cor}
If $D$ is a link diagram presented as a braid closure, then $P^{b}(D)=P(D).$
Equivalently if $D$ is a link diagram for some link $L$ and $P^{b}(D)\neq P(D)$,
then $D$ is not braidlike isotopic to a braid presentation of $L.$
\end{cor}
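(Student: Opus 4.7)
The plan is to derive this corollary as a short bookkeeping consequence of the two main homological inputs already in place: Theorem \ref{thm:KRHom} (decategorification of $\HH$ on braid closures) and Theorem \ref{thm:BriadlikeIso} (braidlike-isotopy invariance of $\HH$).

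For the first assertion, I would simply set $D=L_{\beta}$ for some braid $\beta$ and unpack the definition (\ref{eq:DefPB}): by definition $P^{b}(D)=\PP(L_{\beta})|_{t=-1}$, and the second half of Theorem \ref{thm:KRHom} identifies this specialization with $P(L_{\beta})=P(D)$. Nothing further is required.

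The ``equivalently'' clause I would prove as the contrapositive of the stronger statement that, whenever $D$ is braidlike isotopic to some braid closure $L_{\beta}$ of the same link $L$, one has $P^{b}(D)=P(D)$. I would chain three equalities: Theorem \ref{thm:BriadlikeIso} yields $\HH(D)\simeq\HH(L_{\beta})$, so their Poincar\'e series coincide and $P^{b}(D)=P^{b}(L_{\beta})$; the first assertion applied to $L_{\beta}$ gives $P^{b}(L_{\beta})=P(L_{\beta})$; and braidlike isotopy preserves $P$ on the nose, giving $P(L_{\beta})=P(D)$. Taking the contrapositive then yields the stated implication.

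The only point that requires care (and really the only potential obstacle) is to verify that no stray shifts accumulate along the chain: Theorem \ref{thm:BriadlikeIso} must give an exact isomorphism of triply-graded vector spaces rather than one only up to a shift (as appears in the Markov-move part of Theorem \ref{thm:KRHom}), and the moves of Definition \ref{def:BraidlikeIso} must preserve $P(D)$ exactly in $\ZZ(q,a)$ rather than merely up to a unit in $\ZZ[q^{\pm1},a^{\pm1}]$ as in Theorem \ref{thm:HOMFLYPoly}. Both hold because the unit ambiguity in $P$ and the grading-shift ambiguity in $\HH$ originate entirely from Reidemeister I, which is excluded from the braidlike moves in Figure \ref{fig:BraidlikeMoves}.
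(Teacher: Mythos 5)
Your proof is correct and is essentially the argument the paper intends: the first assertion is exactly the $t=-1$ specialization statement of Theorem \ref{thm:KRHom}, and the second is its contrapositive combined with the braidlike-isotopy invariance of $P^{b}$ from Theorem \ref{thm:PBPoly}. Your closing remark about the absence of shift or unit ambiguities (since Reidemeister I is excluded from the braidlike moves, and $P$ is exactly invariant under RII/RIII) is the right point to check and is handled correctly.
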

We can also give a MOY-style constuction for $P^{b}(D)$. In particular,
we address the relation (MOY IIb) from Figure \ref{fig:MOYRelations}.
As we saw in Lemma \ref{lem:CatMOYIIb}, the categorified MOY IIb
relation does not hold as we would expect. However, we can decategorify
the results in Proposition \ref{prop:CatMOYMoves}, Proposition \ref{prop:virtualMOYmoves},
Theorem \ref{thm:CatRIIb} and Lemma \ref{lem:CatMOYIIb} in a natural
way.
\begin{prop}
Let $D$ be a link diagram. The relations in Figure \ref{fig:MOYPb}
hold for $P^{b}(D).$
\end{prop}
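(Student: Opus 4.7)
The plan is to derive each relation in Figure \ref{fig:MOYPb} by taking the Poincaré series of an already-established homotopy equivalence or short exact sequence of bicomplexes and then specializing $t = -1$. Recall that a homotopy equivalence $C \simeq C'$ of triply graded complexes implies $\PP(C) = \PP(C')$, and a short exact sequence $0 \to C_1 \to C_2 \to C_3 \to 0$ with componentwise-split modules gives $\PP(C_2) = \PP(C_1) + \PP(C_3)$. The rest is bookkeeping.

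For the two skein-type relations at the top of Figure \ref{fig:IntroPB}, I would read directly off the definitions (\ref{eq:RouPos}) and (\ref{eq:RouNeg}). Since $C(\posx)$ is the mapping cone $C(\arcs)\xrightarrow{\chi_i} tq^{-2} C(\wideedge)$, its Poincaré series is $\PP(\arcs) + tq^{-2}\PP(\wideedge)$, so setting $t=-1$ gives $P^b(\posx) = P^b(\arcs) - q^{-2} P^b(\wideedge)$, which is the first relation. The computation for $\negx$ via (\ref{eq:RouNeg}) is analogous, producing the second relation. Next, (MOY 0), (MOY I), (MOY IIa), and (MOY III) are obtained by decategorifying the four isomorphisms of Proposition \ref{prop:CatMOYMoves}. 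All the grading shifts appearing there involve only $q$ and $a$, so the Poincaré series identities are unaffected by the substitution $t=-1$. The rewritten forms (\ref{eq:CatMOY0Rewrite}) and (\ref{eq:CatMOYIRewrite}) already exhibit (MOY 0) and (MOY I) as multiplication by $(1+aq^2)/(1-q^2)$ and $(1+aq^4)/(1-q^2)$; an identical rearrangement of (\ref{eq:CatMOYII}) and (\ref{eq:CatMOYIII}) yields (MOY IIa) and (MOY III).

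The delicate step, and the main obstacle, is (MOY IIb*), because a categorified MOY IIb isomorphism genuinely fails (Theorem \ref{thm:CatRIIb}). Here I would invoke Lemma \ref{lem:CatMOYIIb}, whose commutative diagram exhibits $C(\edgeedge)$ as fitting into a short exact sequence with subcomplex $\tilde C(\arcsud) = \frac{q^2+aq^6}{1-q^2} C(\arcsud)$ and quotient $C(\viredge)$. Additivity of Poincaré series on short exact sequences then gives
\[
\PP(\edgeedge) \;=\; \PP(\viredge) \;+\; \frac{q^2 + aq^6}{1-q^2}\,\PP(\arcsud),
\]
and specializing $t=-1$ yields exactly (MOY IIb*). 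The work is in checking that the middle vertical map in Lemma \ref{lem:CatMOYIIb} is really a chain isomorphism identifying the displayed subcomplex and quotient with $\tilde C(\arcsud)$ and $C(\viredge)$, rather than merely a diagram of formal summands; but Lemma \ref{lem:CatMOYIIb} is stated precisely to provide this, and once it is in hand the decategorification is immediate.
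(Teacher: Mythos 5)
Your proposal is correct and is exactly the argument the paper intends: the paper states this proposition without an explicit proof, noting only that it follows ``in a natural way'' by decategorifying Proposition \ref{prop:CatMOYMoves}, Lemma \ref{lem:CatMOYIIb}, and the related categorified results, which is precisely what you carry out. Your treatment of the skein-type relations via the mapping-cone presentations (\ref{eq:RouPos}), (\ref{eq:RouNeg}) (with the $t=-1$ specialization invariant under passing to homology, being an Euler characteristic in the $t$-grading) is right, and your treatment of (MOY 0), (MOY I), (MOY IIa), (MOY III) via Proposition \ref{prop:CatMOYMoves} is also right, since all shifts there involve only $q$ and $a$.

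One small imprecision worth flagging: for (MOY IIb*) you appeal to ``additivity of Poincar\'e series on short exact sequences.'' The Poincar\'e series of homology is not additive on arbitrary short exact sequences of complexes (only the Euler characteristic in the relevant degree is); what you actually need, and what Lemma \ref{lem:CatMOYIIb} supplies via its middle vertical homotopy equivalence, is the direct-sum decomposition $C(\edgeedge)\simeq C(\viredge)\oplus\tilde{C}(\arcsud)$, from which additivity of $\PP$ is immediate. You do acknowledge this in your final sentence, so the argument is sound, but the invocation of short-exact-sequence additivity is the wrong general principle and should be replaced by the split (direct-sum) decomposition from the lemma.
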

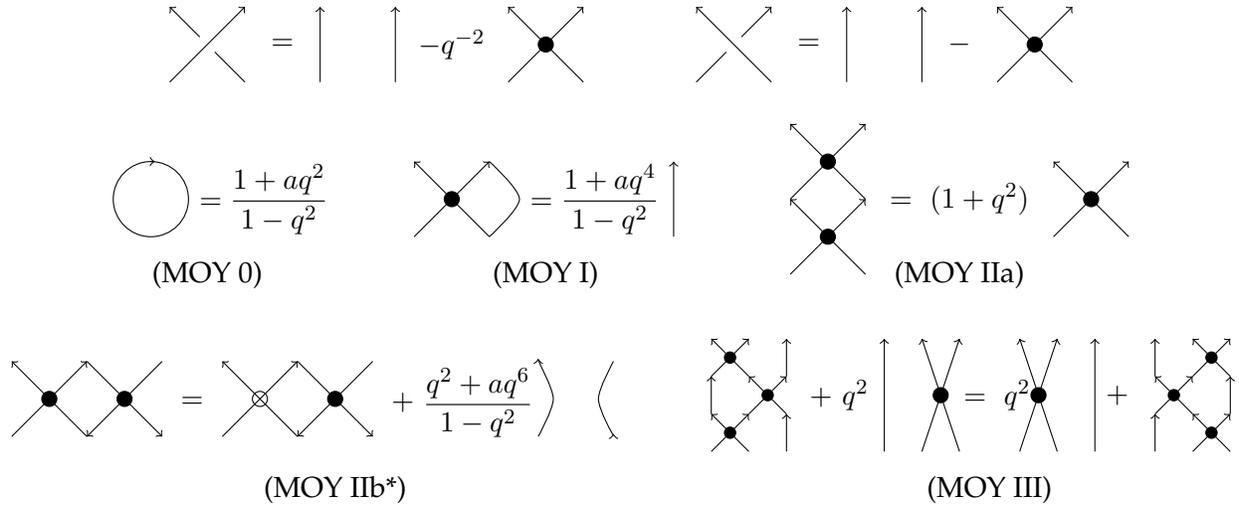
\begin{figure}[h]
\begin{tikzpicture} \draw[->] (3,0)--(4,1);\draw(4,0)--(3.6,0.4);\draw[->] (3.4,0.6)--(3,1); \node at (4.5,0.5) {$=$}; \draw[->] (5,0)--(5,1); \draw [->] (6,0)--(6,1);  \node at (6.75,0.5){$-q^{-2}$}; \draw[->] (7.5,0)--(8.5,1);\draw[->] (8.5,0)--(7.5,1);  \draw[fill=black] (8,0.5) circle (0.1cm);
\draw[->] (11,0)--(10,1);\draw(10,0)--(10.4,0.4);\draw[->] (10.6,0.6)--(11,1); \node at (11.5,0.5) {$=$}; \draw[->] (12,0)--(12,1); \draw [->] (13,0)--(13,1);  \node at (13.5,0.5){$-$}; \draw[->] (14,0)--(15,1);\draw[->] (15,0)--(14,1);  \draw[fill=black] (14.5,0.5) circle (0.1cm);
\end{tikzpicture}

\vspace{.2in}

\begin{tikzpicture} \draw (-3.5,1) circle (.5cm); \draw[->] (-3.5,1.5)--(-3.45,1.5); \node at (-2,1) {$=\dfrac{1+aq^2}{1-q^2}$}; \node at (-2.75,-0) {(MOY 0)};
\draw[->](0,0.5)--(1,1.5); \draw[->](1,0.5)--(0,1.5); \draw[fill=black](0.5,1) circle (0.1cm); \draw plot [smooth] coordinates {(1,1.5)(1.4,1)(1,0.5)}; \node at (2.4,1) {$= \dfrac{1+aq^4}{1-q^2}$}; \draw[->] (3.45,0.5)--(3.45,1.5);
\node at (1.75,-0) {(MOY I)}; \draw[->](5,0)--(6,1); \draw[->](6,0)--(5,1); \draw[fill=black](5.5,0.5) circle (0.1cm); \draw[->](5,1)--(6,2); \draw[->](6,1)--(5,2); \draw[fill=black](5.5,1.5) circle (0.1cm); \node at (7.25,1) {$=\,\,(1+q^2)$}; \draw[->](8.5,0.5)--(9.5,1.5); \draw[->](9.5,0.5)--(8.5,1.5); \draw[fill=black](9,1) circle (0.1cm);
\node at (7.25,-0) {(MOY IIa)};
\end{tikzpicture}

\vspace{.2in}

\begin{tikzpicture} \draw[->](-1.8,0.2)--(-0.8,1.2);\draw[->](-0.8,0.2)--(-1.8,1.2);\draw[fill=black] (-1.3,0.7) circle (0.1cm); \draw[->](-0.8,1.2)--(0.2,0.2);\draw[->](0.2,1.2)--(-0.8,0.2);\draw[fill=black] (-0.3,0.7) circle (0.1cm); \node at (0.6,0.65) {$=$}; \draw[->] (1,0.2)--(2,1.2); \draw[<-] (1,1.2)--(2,0.2); \draw (1.5,0.7) circle (0.1cm); \draw[<-] (2,0.2)--(3,1.2); \draw[->] (2,1.2)--(3,0.2); \draw[fill=black] (2.5,0.7) circle (0.1cm); \node at (3.4,0.65) {$+$}; \node at (4.4,0.65) {$\dfrac{q^2+aq^6}{1-q^2}$}; \draw[->] plot [smooth] coordinates {(5.2,0.2)(5.4,0.7)(5.2,1.2)}; \draw[<-] plot [smooth] coordinates {(6.2,0.2)(6,0.7)(6.2,1.2)}; \node at (2.5,-0.5) {(MOY IIb*)};

\draw[->](7.5,0)--(8,0.5);\draw[->](8,0)--(7.5,0.5);\draw[->](8.5,0)--(8.5,0.5); \draw[fill=black] (7.75,0.25) circle (0.075cm); \draw[->](7.5,0.5)--(7.5,1);\draw[->](8,0.5)--(8.5,1);\draw[->] (8.5,0.5)--(8,1); \draw[fill=black] (8.25,0.75) circle (0.075cm); \draw[->](7.5,1)--(8,1.5);\draw[->](8,1)--(7.5,1.5);\draw[->] (8.5,1)--(8.5,1.5); \draw[fill=black] (7.75,1.25) circle (0.075cm); \node at (9.2,0.75) {$+\,\, q^2$}; \draw[->] (9.8,0)--(9.8,1.5); \draw[->](10.3,0)--(10.8,1.5); \draw[->](10.8,0)--(10.3,1.5); \draw[fill=black] (10.55,0.75) circle (0.1cm); \node at (11.3,0.75) {$=\,\, q^2$}; \draw[->] (11.6,0)--(12.1,1.5); \draw[->](12.1,0)--(11.6,1.5); \draw[->](12.6,0)--(12.6,1.5); \draw[fill=black] (11.85,0.75) circle (0.1cm); \node at (12.9,0.75) {$+$}; \draw[->](13.4,0)--(13.4,0.5);\draw[->](13.9,0)--(14.4,0.5);\draw[->] (14.4,0)--(13.9,0.5); \draw[fill=black] (14.15,0.25) circle (0.075cm); \draw[->](13.4,0.5)--(13.9,1);\draw[->](13.9,0.5)--(13.4,1);\draw[->] (14.4,0.5)--(14.4,1); \draw[fill=black] (13.65,0.75) circle (0.075cm); \draw[->](13.4,1)--(13.4,1.5);\draw[->](13.9,1)--(14.4,1.5);\draw[->] (14.4,1)--(13.9,1.5); \draw[fill=black] (14.15,1.25) circle (0.075cm); \node at (11.2,-0.5) {(MOY III)}; \end{tikzpicture}

\caption{MOY relations for $P^{b}(D)$ and the deformed Reidemeister IIb relation.
The notation $P^{b}(\cdot)$ is omitted for readability.\label{fig:MOYPb}}
\end{figure}

\begin{example}
\label{exa:TorusKnotEx} Let $D$ be the diagram of the $(2,2k+1)$-torus
knot given in Figure \ref{fig:TorusKnotEx}. First note that we can
transform $D$ to $D'$ using (RIIb') so that $P^{b}(D)=q^{-4}P^{b}(D').$
$D'$ is isotopic to the $(2,2k-1)$-torus knot via a planar isotopy
and a (braidlike) Reidemeister IIa move. Therefore via a straight-forward
calculation in similar to Example \ref{exa:LHTrefoil},

\[
P^{b}(D)=\frac{1+aq^{2}}{1-q^{2}}\left((a+1)\sum_{i=1}^{k-1}q^{-i}+q^{-4k}\right).
\]

However, if $\tilde{D}$ is a braid presentation of the $(2,2k+1)$-torus
knot, then 
\[
P^{b}(\tilde{D})=\frac{1+aq^{2}}{1-q^{2}}\left((a+1)\sum_{i=1}^{k}q^{-i-2}+q^{-4k-2}\right).
\]

Therefore $D$ is not braidlike isotopic to a braid presentation of
the $(2,2k+1)$-torus knot for all $k\geq1$.

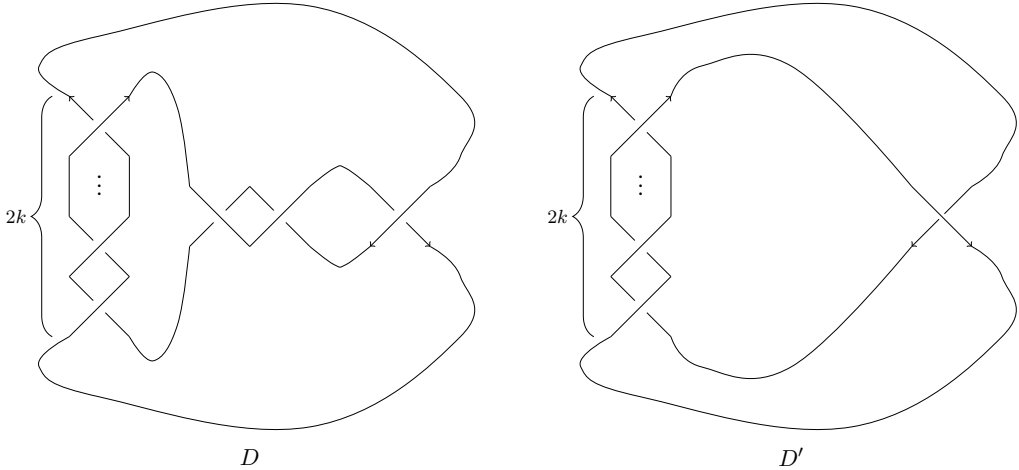
\begin{figure}[h]
\scalebox{0.8}{
\begin{tikzpicture} \usetikzlibrary{decorations.pathreplacing}
\draw (0,0) node (v12) {}--(1,1); \draw (1,0) node (v3) {}--(.6,.4); \draw (.4,.6)--(0,1); \draw (0,1)--(1,2); \draw (1,1)--(0.6,1.4); \draw (0.4,1.6)--(0,2); \draw (0,2)--(0,3); \draw (1,2)--(1,3); \node at (0.5,2.62) {$\vdots$}; \draw[->] (0,3)--(1,4) node (v1) {}; \draw (1,3)--(0.6,3.4); \draw[->] (0.4,3.6)--(0,4) node (v10) {}; \draw [decorate,decoration={brace,amplitude=10pt},xshift=-8pt,yshift=0pt] (0,0) -- (0,4.0) node [black,midway,xshift=-0.6cm]  {\footnotesize $2k$}; \draw[<-] (5,1.5) node (v8) {}--(6,2.5) node (v9) {}; \draw[<-] (6,1.5) node (v11) {}--(5.6,1.9); \draw (5.4,2.1)--(5,2.5) node (v6) {}; \draw (3,1.5)--(4,2.5) node (v5) {}; \draw (4,1.5) node (v7) {}--(3.6,1.9); \draw (3.4,2.1)--(3,2.5); \draw (3,1.5)--(2,2.5) node (v2) {}; \draw (2,1.5) node (v4) {}--(2.4,1.9); \draw (2.6,2.1)--(3,2.5); \draw  plot[smooth, tension=.7] coordinates {(v1) (1.4,4.4) (1.8,3.8) (v2)}; \draw  plot[smooth, tension=.7] coordinates {(v3) (1.4,-0.4) (1.8,0.2) (v4)}; \draw  plot[smooth, tension=.7] coordinates {(v5) (4.4,2.8) (4.6,2.8) (v6)}; \draw  plot[smooth, tension=.7] coordinates {(v7) (4.4,1.2) (4.6,1.2) (v8)}; \draw  plot[smooth, tension=.7] coordinates {(v9) (6.5,3) (6.5,4) (4,5.5) (0.5,5) (-0.5,4.5) (v10)}; \draw  plot[smooth, tension=.7] coordinates {(v11) (6.5,1) (6.5,0) (4,-1.5) (0.5,-1) (-0.5,-0.5) (v12)}; \node at (3,-2) {$D$};
\draw (9,0) node (v12) {}--(10,1); \draw (10,0) node (v3) {}--(9.6,0.4); \draw (9.4,0.6)--(9,1); \draw (9,1)--(10,2); \draw (10,1)--(9.6,1.4); \draw (9.4,1.6)--(9,2); \draw (9,2)--(9,3); \draw (10,2)--(10,3); \node at (9.5,2.62) {$\vdots$}; \draw[->] (9,3)--(10,4) node (v1) {}; \draw (10,3)--(9.6,3.4); \draw[->] (9.4,3.6)--(9,4) node (v10) {}; \draw [decorate,decoration={brace,amplitude=10pt},xshift=-8pt,yshift=0pt] (9,0) -- (9,4) node [black,midway,xshift=-0.6cm]  {\footnotesize $2k$}; \draw[<-] (14,1.5) node (v8) {}--(15,2.5) node (v9) {}; \draw(15,1.5) node (v11) {}--(14.6,1.9); \draw (14.4,2.1)--(14,2.5) node (v6) {}; \draw[ultra thick, white,->] (14,2.5)--(15,1.5); \draw[ultra thick, white,->] (14.05,2.5)--(15.05,1.5); \draw[ultra thick, white,->] (13.95,2.5)--(14.95,1.5); \draw[,->] (14,2.5)--(15,1.5);
\draw  plot[smooth, tension=.7] coordinates {(v7)}; \draw  plot[smooth, tension=.7] coordinates {(v9)}; \draw  plot[smooth, tension=.7] coordinates {(v9)}; \draw  plot[smooth, tension=.7] coordinates {(v9)}; \draw  plot[smooth, tension=.7] coordinates {(v9) (15.5,3) (15.5,4) (13,5.5) (9.5,5) (8.5,4.5) (v10)}; \draw  plot[smooth, tension=.7] coordinates {(v11) (15.5,1) (15.5,0) (13,-1.5) (9.5,-1) (8.5,-0.5) (v12)}; \node at (12,-2) {$D'$}; \draw  plot[smooth, tension=.7] coordinates {(v1) (10.5,4.5) (12,4.5) (v6)}; \draw  plot[smooth, tension=.7] coordinates {(v3) (10.5,-0.5) (12,-0.5) (v8)}; \end{tikzpicture}}

\caption{A nonbraidlike diagram $D$ for the $(2,2k+1)$-torus knot and another link diagram $D'$ such that $P^b(D)=q^{-4}P^b(D')$.
moves for $P^{b}(D)$\label{fig:TorusKnotEx}}
\end{figure}

\end{example}

\subsection{Obstruction to extending HOMFLY-PT homology to virtual links}

Finally we wish to present an argument showing that the current definition
of $\HH(D)$ cannot be extended to virtual links, even when they are
presented as closures of virtual braids. We will ultimately use $P^{b}(D)$
to justify this statement.

A \emph{virtual braid} is a braid in which we allow virtual crossings
alongside positive and negative crossings. Two virtual braids are
said to be equivalent if they differ by the moves from Figure \ref{fig:BraidlikeMoves}
and the moves (VR2a), (VR3), and (SVR) from Figure \ref{fig:VirReidMoves}.
Kauffman in \cite{Ka99} proves that every virtual link can be presented
as the closure of a virtual braid. There is also an analogue of the
Markov theorem for virtual links.
\begin{thm}
[Kamada \cite{Ka07}] Let $\beta$ and $\beta'$ be two virtual
braids. Their braid closures are equivalent virtual links if and only
if they differ by a sequence of virtual braid equivalence moves (the
braidlike Reidemeister moves, (VR2a), (VR3), and (SVR)), the Markov
moves, and the virtual exchange move. The Markov moves and virtual
exchange more are pictured in Figure \ref{fig:MarkovVir}.
\end{thm}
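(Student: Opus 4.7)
The plan is to establish both directions of the biconditional, leaning heavily on a virtual Alexander-type theorem (due to Kauffman, cited in the paper) asserting that every virtual link admits a virtual braid closure presentation. The forward (``only if'') direction is routine: one checks locally that each of the listed moves---the braidlike Reidemeister moves, (VR2a), (VR3), (SVR), the Markov moves, and the virtual exchange move---preserves the virtual link type of the closure. Each such move corresponds to a finite sequence of classical and virtual Reidemeister moves applied in the complement of the braid axis, and these can be verified diagram by diagram from Figure \ref{fig:MarkovVir}.

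For the reverse (``if'') direction, I would adapt the strategy of one of the standard proofs of the classical Markov theorem (Birman--Menasco, Traczyk, Lambropoulou--Rourke) to the virtual category. Concretely, given a virtual isotopy between two braid closures $\hat\beta$ and $\hat{\beta'}$, I would decompose the isotopy into a sequence of elementary virtual Reidemeister moves, and then systematically convert each such local move, performed in the presence of a fixed braid axis, into a combination of braid equivalence moves, Markov stabilization/destabilization, conjugation, and the virtual exchange move. The decomposition would be organized by the type of interaction between the move and the braid axis: moves supported entirely away from the axis reduce to braid equivalence; moves that cross the axis with classical crossings alone reduce to classical Markov moves; and the remaining cases involve the axis interacting with virtual crossings.

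The main obstacle I expect lies precisely in handling the interaction between the braid axis and virtual crossings. In the classical case, axis-crossing interactions are absorbed by Markov stabilization and conjugation, but in the virtual setting the braid axis itself is only well-defined up to virtual equivalence, so there are transitions between virtual braid forms of the same link that cannot be realized by any finite sequence of classical Markov moves alone. The virtual exchange move is the minimal extra ingredient that fills this gap, and the technical heart of the proof would be a careful factorization lemma: any local isotopy that pushes a virtual crossing across the braid axis can be rewritten as a composition of a braid equivalence, a stabilization/destabilization, and a single virtual exchange move (possibly after further braidlike simplification). Establishing the completeness of this list---i.e., that no further move beyond the virtual exchange is needed---is the delicate combinatorial step.

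Since this statement is attributed to Kamada \cite{Ka07}, I would ultimately defer the detailed combinatorial bookkeeping to that reference rather than reprove it here; our use of the theorem in the sequel only requires its statement, not its internal mechanics.
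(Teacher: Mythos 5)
This theorem is imported: the paper gives no proof of it at all, attributing the result entirely to Kamada \cite{Ka07}, so there is no internal argument to compare your sketch against. Your decision to defer the combinatorial details to the same reference is therefore exactly what the paper does, and your outline (check invariance of each move for the ``only if'' direction, adapt a classical Markov-theorem strategy and isolate the axis--virtual-crossing interactions for the ``if'' direction) is a reasonable description of the standard route; just be aware that the one step you flag as delicate --- completeness of the move list, i.e.\ that the virtual exchange move suffices --- is precisely the substance of Kamada's theorem, so nothing in your write-up constitutes an independent proof, nor does it need to for the purposes of this paper.
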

\begin{figure}[h]
\begin{tikzpicture}
\draw (0,0)--(0,0.6) node (v1) {}; \draw (1,0)--(1,0.6); \draw  (-0.1,0.6) rectangle (1.1,1.2); \draw (0,1.2)--(0,1.8) node (v1) {}; \draw (1,1.2)--(1,1.8); \draw  (-0.1,1.8) rectangle (1.1,2.4); \draw[->] (0,2.4)--(0,3); \draw[->] (1,2.4)--(1,3); \node at (0.5,1.5) {$\cdots$}; \node at (0.5,0.3) {$\cdots$}; \node at (0.5,2.7) {$\cdots$}; \node at (0.5,2.1) {$\beta$}; \node at (0.5,0.9) {$\alpha$};
\draw (2,0)--(2,0.6) node (v1) {}; \draw (3,0)--(3,0.6); \draw  (1.9,0.6) rectangle (3.1,1.2);
\draw (2,1.2)--(2,1.8) node (v1) {}; \draw (3,1.2)--(3,1.8); \draw  (1.9,1.8) rectangle (3.1,2.4);
\draw[->] (2,2.4)--(2,3); \draw[->] (3,2.4)--(3,3);
\node at (2.5,1.5) {$\cdots$}; \node at (6.5,0.3) {$\cdots$}; \node at (2.5,2.7) {$\cdots$}; \node at (2.5,2.1) {$\alpha$}; \node at (2.5,0.9) {$\beta$}; \node at (1.5,1.5) {$\leftrightarrow$}; \node at (1.5,-0.5) {Markov move I};
\node at (4.3,0.3) {$\cdots$}; \node at (4.3,2.7) {$\cdots$}; \node at (4.3,1.5) {$\cdots$}; \draw[->] (5.2,2.4)--(5.2,3); \draw[->] (4.6,2.4)--(4.6,3); \draw[->] (4,2.4)--(4,3); \draw (5.2,1.2)--(5.2,1.8); \draw (4.6,1.2)--(4.6,1.8); \draw (4,1.2)--(4,1.8); \draw (5.2,0)--(5.2,0.6); \draw (4.6,0)--(4.6,0.6); \draw (4,0)--(4,0.6); \draw (5.2,0)--(5.2,1.2); \draw (4,1.6)--(4,2.4); \draw (4.6,1.8)--(5.2,2.4); \draw (4.8,2.2)--(4.6,2.4); \draw (5.2,1.8)--(5,2); \draw  (3.9,1.2) rectangle (4.7,0.6); \node at (4.3,0.9) {$\beta$};
\node at (8.7,0.3) {$\cdots$}; \node at (8.7,2.7) {$\cdots$}; \node at (8.7,1.5) {$\cdots$}; \draw[->] (9.6,2.4)--(9.6,3); \draw[->] (9,2.4)--(9,3); \draw[->] (8.4,2.4)--(8.4,3); \draw (9.6,1.2)--(9.6,1.8); \draw (9,1.2)--(9,1.8); \draw (8.4,1.2)--(8.4,1.8); \draw (9.6,0)--(9.6,0.6); \draw (9,0)--(9,0.6); \draw (8.4,0)--(8.4,0.6); \draw (9.6,0)--(9.6,1.2); \draw (8.4,1.6)--(8.4,2.4); \draw (9,1.8)--(9.2,2); \draw (9.2,2.2)--(9,2.4); \draw (9.6,1.8)--(9,2.4); \draw  (8.3,1.2) rectangle (9.1,0.6); \node at (8.7,0.9) {$\beta$};
\node at (6.5,2.7) {$\cdots$};
\node at (6.5,1.5) {$\cdots$};
\draw[->] (7.4,2.4)--(7.4,3); \draw[->] (6.8,2.4)--(6.8,3); \draw[->] (6.2,2.4)--(6.2,3);
\draw (7.4,1.2)--(7.4,1.8); \draw (6.8,1.2)--(6.8,1.8); \draw (6.2,1.2)--(6.2,1.8);
\draw (7.4,0)--(7.4,0.6); \draw (6.8,0)--(6.8,0.6); \draw (6.2,0)--(6.2,0.6);
\draw (7.4,0)--(7.4,1.2); \draw (6.2,1.6)--(6.2,2.4); \draw (6.8,1.8)--(6.8,2.4); \draw (6.8,1.8)--(6.8,2.4); \draw (7.4,1.8)--(7.4,2.4);
\draw (9.6,2.4)--(9.4,2.2);
\draw  (6.1,1.2) rectangle (6.9,0.6); \node at (6.5,0.9) {$\beta$}; \node at (5.7,1.5) {$\leftrightarrow$}; \node at (7.9,1.5) {$\leftrightarrow$}; \node at (6.8,-0.5) {Markov move II};
\draw (10.5,0)--(10.5,0.5); \draw (11,0)--(11,0.5); \draw (11.5,0.5)--(11.5,0); \draw (11,0.5)--(11.5,1); \draw (11.5,0.5)--(11,1); \draw (10.5,0.5)--(10.5,1) node (v1) {}; \draw (11.5,1.5)--(11.5,1); \draw (10.5,2) node (v2) {}--(10.5,1.5); \draw (11,1.5)--(11.5,2); \draw (11,2)--(11.5,1.5); \draw[->] (10.5,2.5)--(10.5,3); \draw[<-] (11,3)--(11,2.5); \draw [<-] (11.5,3)--(11.5,2.5); \draw (11.5,2.5)--(11.5,2); \draw  (10.4,1) rectangle (11.1,1.5); \draw  (10.4,2) rectangle (11.1,2.5); \draw  (11.25,0.75) ellipse (0.1 and 0.1); \draw  (11.25,1.75) ellipse (0.1 and 0.1); \node at (10.75,2.25) {$\alpha$}; \node at (10.75,1.25) {$\beta$};
\node at (12,1.5) {$\leftrightarrow$};
\draw (12.5,0)--(12.5,0.5); \draw (13,0)--(13,0.5); \draw (13.5,0.5)--(13.5,0); \draw (13,0.5)--(13.2,0.7); \draw (13.5,1)--(13.3,0.8); \draw (13.5,0.5)--(13,1); \draw (12.5,0.5)--(12.5,1) node (v1) {}; \draw (13.5,1.5)--(13.5,1); \draw (12.5,2) node (v2) {}--(12.5,1.5); \draw (13,1.5)--(13.5,2); \draw (13.3,1.7)--(13.5,1.5); \draw (13,2)--(13.2,1.8); \draw[->] (12.5,2.5)--(12.5,3); \draw[<-] (13,3)--(13,2.5); \draw [<-] (13.5,3)--(13.5,2.5); \draw (13.5,2.5)--(13.5,2); \draw  (12.4,1) rectangle (13.1,1.5); \draw  (12.4,2) rectangle (13.1,2.5);
\node at (12.75,2.25) {$\alpha$}; \node at (12.75,1.25) {$\beta$}; \node at (12,-0.5) {Virtual exchange move}; \node at (10.775,0.25) {$\cdots$}; \node at (10.775,2.75) {$\cdots$};
\node at (12.775,0.25) {$\cdots$}; \node at (12.775,2.75) {$\cdots$};
\end{tikzpicture}

\caption{Markov moves for virtual links and the virtual exchange move. $\alpha$
and $\beta$ are virtual braids. \label{fig:MarkovVir}}
\end{figure}
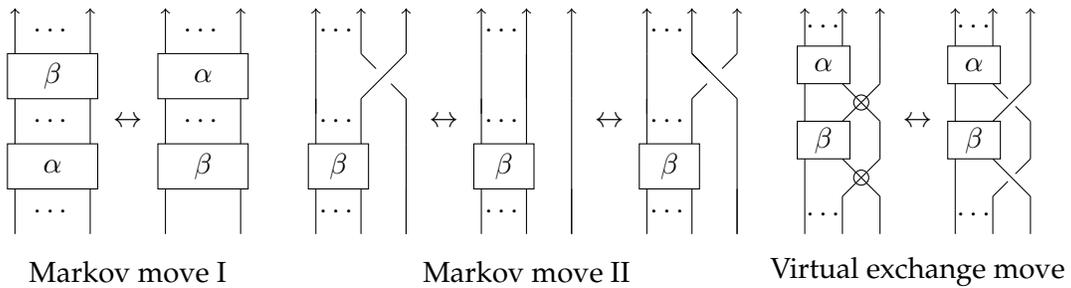

Now we show by example that $P^{b}(D)$ is \emph{not }invariant under
the virtual exchange move. Therefore $\HH(D)$ is not an invariant
of virtual links, even when the links are presented as virtual braid
closures.
\begin{example}
\label{exa:ExVirFail}Let $L$ be a connected sum of two virtual Hopf
links as shown in Figure \ref{fig:ExVirFail}. $\beta_{1}$ and $\beta_{2}$
are two virtual braids whose closures are equivalent as virtual links
to $L$. In particular, $\beta_{1}$ and $\beta_{2}$ are related
by Markov move I and the virtual exchange move shown in Figure \ref{fig:MarkovVir}.
Using the relations from Figure \ref{fig:MOYPb} we can directly compute
that

\begin{eqnarray*}
P^{b}(D_{1}) & = & \frac{1+aq^{2}}{1-q^{2}}\left(1-q^{-2}\left(\frac{1+aq^{4}}{1-q^{2}}\right)\right)^{2}\\
P^{b}(D_{2}) & = & \frac{1+aq^{2}}{1-q^{2}}\left(aq^{2}+2\left(\frac{1+aq^{4}}{1-q^{2}}\right)-q^{-2}\left(\frac{1+aq^{4}}{1-q^{2}}\right)^{2}\right)
\end{eqnarray*}

It is easy to see that $P^{b}(D_{2})\neq P^{b}(D_{1}).$ In particular,
\[
P^{b}(D_{2})-P^{b}(D_{1})=\dfrac{q^{2}(1+a+aq^{2}+a^{2}q^{2})}{1-q^{2}}
\]
 Therefore, $P^{b}(D)$ is not an invariant of virtual links and thus
neither is $\HH(D).$
\end{example}
\begin{figure}[H]
\begin{tikzpicture}                                               \draw[->] (-8.2,2) node (v4) {}--(-7.2,3); \draw (-7.2,2) node (v8) {}--(-7.6,2.4); \draw[->] (-7.8,2.6)--(-8.2,3); \draw (-8.2,3)--(-7.2,4) node (v7) {}; \draw (-7.2,3)--(-8.2,4) node (v2) {}; \draw (-7.7,3.5) circle (0.1cm); \draw[<-] (-10.2,2) node (v6) {}--(-9.2,3); \draw [<-](-9.2,2) node (v3) {}--(-9.6,2.4); \draw (-9.8,2.6)--(-10.2,3); \draw (-10.2,3)--(-9.2,4) node (v1) {}; \draw (-9.2,3)--(-10.2,4) node (v5) {}; \draw (-9.7,3.5) circle (0.1cm); \draw  plot[smooth, tension=.7] coordinates {(v1) (-8.9,4.3) (-8.7,4.5) (-8.5,4.3) (v2)}; \draw  plot[smooth, tension=.7] coordinates {(v3) (-8.9,1.7) (-8.7,1.5) (-8.5,1.7) (v4)}; \draw  plot[smooth, tension=.7] coordinates {(v5) (-10.6,3.6) (-11.2,3) (-10.6,2.4) (v6)}; \draw  plot[smooth, tension=.7] coordinates {(v7) (-6.8,3.6) (-6.2,3) (-6.8,2.4) (v8)}; \node at (-8.7,1) {$L$};   \end{tikzpicture}\hspace{1in} \scalebox{0.5}{\begin{tikzpicture} \draw (-4,0)--(-3,1); \draw (-3,0)--(-3.4,0.4); \draw (-3.6,0.6)--(-4,1); \draw (-4,1)--(-3,2); \draw (-3,1)--(-4,2); \draw (-3.5,1.5) circle (0.2cm); \draw (-4,3)--(-3,4); \draw (-3,3)--(-3.4,3.4); \draw (-3.6,3.6)--(-4,4); \draw (-4,4)--(-3,5); \draw(-3,4)--(-4,5); \draw (-3.5,4.5) circle (0.2cm); \draw (-2,0)--(-2,2); \draw (-4,2)--(-4,3); \draw (-3,2)--(-2,3); \draw (-2,2)--(-2.5,2.5); \draw (-2.5,2.5)--(-3,3); \draw (-2,3)--(-2,5); \draw[->] (-4,5)--(-4,6); \draw (-3,5)--(-2.5,5.5); \draw [->] (-2.5,5.5)--(-2,6); \draw[->] (-2,5)--(-3,6); \draw (-2.5,2.5) circle (0.2cm); \draw (-2.5,5.5) circle (0.2cm); \draw (0,0)--(1,1); \draw (1,0)--(.6,.4); \draw (.4,.6)--(0,1); \draw (0,1)--(1,2); \draw (1,1)--(0,2); \draw (.5,1.5) circle (0.2cm); \draw (0,3)--(1,4); \draw (1,3)--(0.6,3.4); \draw (0.4,3.6)--(0,4); \draw (0,4)--(1,5); \draw(1,4)--(0,5); \draw (0.5,4.5) circle (0.2cm); \draw (2,0)--(2,2); \draw (0,2)--(0,3); \draw (1,2)--(2,3); \draw (2,2)--(1.6,2.4); \draw (1.4,2.6)--(1,3); \draw (2,3)--(2,5); \draw[->] (0,5)--(0,6); \draw (1,5)--(1.4,5.4); \draw [->] (1.6,5.6)--(2,6); \draw[->] (2,5)--(1,6);                  \node at (-3,-0.75) {{\Huge$\beta_1$}}; \node at (1,-0.75) {\Huge$\beta_2$}; \end{tikzpicture}}

\caption{A connected sum of two virtual Hopf links, $L,$ and two braid presentations
of $L.$ \label{fig:ExVirFail}}
\end{figure}
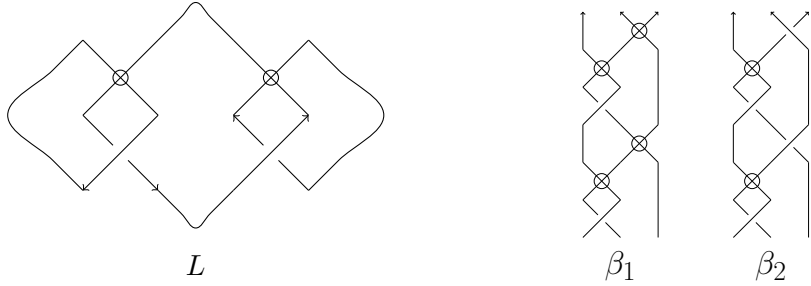

\appendix

\section{Virtual filtrations on HOMFLY-PT homology\label{sec:Virtual}}

In this appendix we present the necessary tools to understand our
application of virtual crossing filtrations from \cite{AbRoz14}.
We begin by reviewing relevant tools from homological algebra and
then introduce virtual crossing filtrations. We end the appendix by
presenting a proof of Lemma \ref{lem:CatMOYIIb}.

\subsection{Homological algebra of twisted complexes \label{sub:TwistedCom} }

For this section we assume that all chain complexes are chain complexes
of objects in some additive category. The main examples to keep in
mind are $R$-mod, the category of $R$-modules, and its homotopy
category $\mathcal{K}(R$-mod$)$.
\begin{defn}
\label{def:TwistedComplex}A \emph{twisted complex $C_{\bullet}$}
is a collection of chain complexes $C_{i}$ with maps $f_{ji}:C_{i}\to C_{j}$
for all $i,j\geq0$ such that \end{defn}
\begin{enumerate}
\item Only finitely many of the $C_{i}$ are nonzero.
\item $f_{ji}\in\bigoplus_{k\in\ZZ}\mbox{Hom}(C_{i,k+(j-i)},C_{j,k})$ if
$j>i$ and $f_{ji}=0$ otherwise.
\item $d_{C_{j}}f_{ji}+(-1)^{j-1+1}f_{ji}d_{C_{i}}=\sum_{j>k>i}f_{jk}f_{ki}$.
\end{enumerate}
We will say the \emph{length }of a map $f_{ji}$ in a twisted complex
is the number $j-i$. From condition (3) we can see that maps of length
one are chain maps, and that the composition of consecutative length
one maps $f_{j+1,j}f_{j,j-1}$ is homotopic to the zero map with homotopy
$f_{j+1,j-1}.$ Similar statements can be deduced for any length map.
\begin{defn}
Let $C_{\bullet}$ be a twisted complex such that $C_{k}=0$ for all
$k>n+1$, the \emph{convolution $\mbox{Con}(C_{\bullet})$ }is the
chain complex which is isomorphic to $\bigoplus_{i\geq0}C_{i}[i]$
as a module with differential 
\[
d_{\mbox{Con}(C_{\bullet})}=\begin{pmatrix}d_{C_{0}} & 0 & \cdots & \cdots & \cdots & 0\\
f_{10} & -d_{C_{1}} & 0 & \cdots & \cdots & 0\\
f_{20} & f_{21} & d_{C_{2}} & 0 & \cdots & 0\\
\vdots & \vdots &  & \ddots &  & \vdots\\
\vdots & \vdots &  &  & \ddots & 0\\
f_{n0} & f_{n1} & \cdots & f_{nk} & \cdots & (-1)^{n}d_{C_{n}}
\end{pmatrix}.
\]

\end{defn}
Note that $d_{Con(C_{\bullet})}^{2}=0$ by condition (3) of Definition
\ref{def:TwistedComplex}. We invite the reader to check that in the
case that $n=1$, $\mbox{Con}(C_{\bullet})\simeq\mbox{Cone}(f_{10})$,
and in the case that $n=2$ that $\mbox{Con}(C_{\bullet})$ is homotopy
equivalent to the iterated mapping cone 
\[
\mbox{ Cone}(\mbox{Cone}(f_{10})\xrightarrow{f_{20}+f_{21}}C_{2}[1])\simeq\Cone(C_{0}\xrightarrow{f_{20}+f_{10}}\Cone(f_{21}))
\]

We also recall a couple of facts which will be useful in simplifying
twisted complexes. Recall a \emph{deformation retract} $\Psi:C\to C'$
is a map $\Psi$ such that there exists a triple $(\Psi:C\to C',\Psi':C'\to C,\psi:C\to C)$
where $\Psi'$ is a chain map such that
\begin{enumerate}
\item $\Psi'\Psi=d_{C}\psi+\psi d_{C}+Id_{C}$
\item $\Psi\Psi'=Id_{C'}$
\item $\Psi\psi,\psi\Psi'$ and $\psi\psi$ are zero maps.
\end{enumerate}
We will often refer to the triple $(\Psi,\Psi',\psi)$ as \emph{SDR
data }for\emph{ $\Psi:C\to C'.$} We now recall a special type of
deformation retract.
\begin{prop}
\label{prop:GaussianElim}Consider the complex 

\begin{diagram}[labelstyle=\scriptstyle] A & \rTo^{\left(\begin{smallmatrix}\bullet \\ \alpha \end{smallmatrix}\right)} &  B \oplus C & \rTo{\left(\begin{smallmatrix}\varphi & \lambda \\ \mu & \nu\end{smallmatrix}\right)}& D \oplus E & \rTo{\left(\begin{smallmatrix} \bullet & \varepsilon  \end{smallmatrix}\right)}& F, \end{diagram}

where $\varphi:B\to D$ is an isomorphism and all other maps are arbitrary
up to the condition that $d^{2}=0.$ Then there exists a deformation
retract 

\begin{diagram}[labelstyle=\scriptstyle] A & \rTo{\left(\begin{smallmatrix}\bullet \\ \alpha \end{smallmatrix}\right)} &  B \oplus C & \rTo{\left(\begin{smallmatrix}\varphi & \lambda \\ \mu & \nu\end{smallmatrix}\right)}& D \oplus E & \rTo{\left(\begin{smallmatrix} \bullet & \varepsilon  \end{smallmatrix}\right)}& F \\ \dTo^{(1)} \uTo_{(1)} && \dTo^{\left(\begin{smallmatrix}0 & 1 \end{smallmatrix}\right)} \uTo_{\left(\begin{smallmatrix}-\varphi^{-1}\lambda \\ 1 \end{smallmatrix}\right)} && \dTo^{\left(\begin{smallmatrix}-\mu\varphi^{-1} & 1 \end{smallmatrix}\right)} \uTo_{\left(\begin{smallmatrix}0\\1 \end{smallmatrix}\right)} && \dTo^{(1)} \uTo_{(1)}\\ A & \rTo{\left(\begin{smallmatrix}\alpha \end{smallmatrix}\right)} &  C & \rTo{\left(\begin{smallmatrix}\nu - \mu\varphi^{-1}\varepsilon\end{smallmatrix}\right)}& E & \rTo{\left(\begin{smallmatrix}  \varepsilon  \end{smallmatrix}\right)}& F\\ \\ \end{diagram}

We call this deformation retract \emph{Gaussian elimination.}
\end{prop}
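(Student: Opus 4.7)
The plan is to verify directly that the data displayed in the diagram --- the two families of vertical arrows together with an explicit contracting homotopy to be specified --- constitute SDR data, so that the bottom complex is a deformation retract of the top. Conceptually the statement is the standard fact that when $\varphi$ is invertible, the two-term subcomplex $B \xrightarrow{\varphi} D$ is an acyclic direct summand which can be canceled after a change of basis on $B \oplus C$ and $D \oplus E$; the effect is precisely to modify the surviving middle differential from $\nu$ to $\nu - \mu\varphi^{-1}\lambda$.

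First I would check that $\Psi$ (downward) and $\Psi'$ (upward) are chain maps. The only non-automatic squares involve the middle differential $d_1 = \left(\begin{smallmatrix}\varphi & \lambda \\ \mu & \nu\end{smallmatrix}\right) \colon B\oplus C \to D\oplus E$. Direct matrix multiplication gives $(-\mu\varphi^{-1}\;\; 1)\, d_1 = (0,\; \nu - \mu\varphi^{-1}\lambda)$, which matches $(\nu - \mu\varphi^{-1}\lambda)(0\;\;1)$, and the analogous identity handles $\Psi'$. The squares at the outer differentials $\left(\begin{smallmatrix}\bullet \\ \alpha\end{smallmatrix}\right)$ and $(\bullet\;\; \varepsilon)$ commute once one uses $d^2 = 0$ for the original complex, which forces the $\bullet$ entries to equal $-\varphi^{-1}\lambda\alpha$ and $-\varepsilon\mu\varphi^{-1}$ respectively. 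Next, $\Psi\Psi' = \operatorname{Id}$ on the bottom complex follows at each object from one-line block-matrix multiplication (e.g.\ $(0\;\;1)\left(\begin{smallmatrix}-\varphi^{-1}\lambda \\ 1\end{smallmatrix}\right) = 1$ at $C$).

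For the contracting homotopy $\psi$, I would set it to be zero at every position except $D\oplus E \to B\oplus C$, where I take $\psi = \left(\begin{smallmatrix}-\varphi^{-1} & 0 \\ 0 & 0\end{smallmatrix}\right)$. A direct computation then yields $\psi d_1 = \left(\begin{smallmatrix}-1 & -\varphi^{-1}\lambda \\ 0 & 0\end{smallmatrix}\right)$ at $B\oplus C$ and $d_1 \psi = \left(\begin{smallmatrix}-1 & 0 \\ -\mu\varphi^{-1} & 0\end{smallmatrix}\right)$ at $D\oplus E$, which agree with $\Psi'\Psi - \operatorname{Id}$ at the respective positions; at $A$ and $F$ both sides vanish. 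The degeneracy conditions $\Psi\psi = 0$, $\psi\Psi' = 0$, and $\psi^2 = 0$ follow at once from the zero rows/columns in $\psi$ together with the explicit block forms of $\Psi$ and $\Psi'$.

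The main obstacle is bookkeeping rather than insight: one must be careful with signs to conform to the paper's convention $\Psi'\Psi = \operatorname{Id}_C + d_C \psi + \psi d_C$, and one must remember to treat the $\bullet$ entries as constrained by $d^2 = 0$ rather than free. Beyond that, every step is a short matrix calculation; there is no homological subtlety, and the entire proposition reduces to extracting the acyclic $B \xrightarrow{\varphi} D$ summand in a coordinate-careful way.
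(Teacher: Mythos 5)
Your proof is correct. The paper itself gives no argument for this proposition (it defers to \cite{B-N05}), and your direct verification of the SDR data --- the two chain-map checks (using $d^{2}=0$ to pin down the $\bullet$ entries as $-\varphi^{-1}\lambda\alpha$ and $-\varepsilon\mu\varphi^{-1}$), the identity $\Psi\Psi'=\mathrm{Id}$, the homotopy $\psi=\left(\begin{smallmatrix}-\varphi^{-1}&0\\0&0\end{smallmatrix}\right)$ concentrated at $D\oplus E\to B\oplus C$, and the side conditions $\Psi\psi=\psi\Psi'=\psi^{2}=0$ --- is exactly the standard argument found in that reference. Note you have also (correctly) read the surviving differential as $\nu-\mu\varphi^{-1}\lambda$ rather than the $\nu-\mu\varphi^{-1}\varepsilon$ appearing in the statement, which is a typo in the paper.
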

A proof of this propostion can be found in other texts on link homology
such as \cite{B-N05}. We now consider the effect of applying deformation
retracts to constituent complexes in a twisted complex. We write the
following fact in the case that the original twisted complex has only
maps of length 1, but the result can be extended to the general case.
\begin{prop}
\label{prop:TwistedCon}Let $C_{i}$ be chain complexes for $i=1,...,n$
and suppose that $(\Psi_{i},\Psi_{i}',\psi_{i})$ is SDR data for
the deformation retract $\Psi_{i}:C_{i}\to C_{i}'$. Suppose that
$C_{\bullet}=(C_{i},f_{ji})$ is a twisted complex such that $f_{ji}=0$
for $j-i\geq2$. Then there exist maps $f_{ji}':C_{i}'\to C_{j}'$
such that $C_{\bullet}'=(C_{i}',f_{ji}')$ is a twisted complex and
there exists a deformation retract $\nabla:Con(C_{\bullet})\to Con(C_{\bullet}').$
Explicitly, the maps $f_{ji}'$ are given by the formula
\[
f_{ji}'=\Psi_{j}f_{j,j-1}\psi_{j-1}f_{j-1,j-2}\psi_{j-2}\cdots f_{i+1,i}\Psi_{i}'.
\]

\end{prop}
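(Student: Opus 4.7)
The approach I would take is to recognize Proposition~\ref{prop:TwistedCon} as a packaged form of the homological perturbation lemma (HPL) applied to a convolution. First, I would view $\mathrm{Con}(C_{\bullet})$ as the underlying graded object $\bigoplus_i C_i[i]$ equipped with an ``unperturbed'' differential $\delta_0 = \bigoplus_i (-1)^i d_{C_i}$ plus a ``perturbation'' $\tau = \sum_i f_{i+1,i}$ consisting only of length-one maps (by hypothesis $f_{ji}=0$ for $j-i\ge 2$). The individual SDR data $(\Psi_i,\Psi_i',\psi_i)$ assemble blockwise into SDR data $(\Psi,\Psi',\psi)$ for the deformation retract $(\bigoplus_i C_i[i],\delta_0)\to(\bigoplus_i C_i'[i],\delta_0')$, and one checks directly from the axioms for each $i$ that the side conditions $\psi\psi=\psi\Psi'=\Psi\psi=0$ and $\Psi'\Psi = d\psi+\psi d + \mathrm{Id}$ hold for the assembled maps.

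Next, I would turn on the perturbation $\tau$ and run the HPL formulas. Crucially, $\tau$ strictly increases the index $i$ while $\psi$ preserves it, so every composite of the form $(\psi\tau)^k$ vanishes for $k\ge n$; the ``geometric series'' that appears in HPL therefore terminates after finitely many terms, giving an honest deformation retract $\nabla\colon \mathrm{Con}(C_\bullet)\to\mathrm{Con}(C_\bullet')$ with explicit quasi-inverse and homotopy. Expanding the HPL expression for the perturbed differential on $\bigoplus_i C_i'[i]$ term by term, the contribution from the component $C_i'\to C_j'$ is exactly the zigzag
\[
f_{ji}' = \Psi_j f_{j,j-1}\,\psi_{j-1}\,f_{j-1,j-2}\,\psi_{j-2}\cdots f_{i+1,i}\,\Psi_i',
\]
matching the formula in the statement. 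Organizing these off-diagonal blocks into the collection $(C_i',f_{ji}')$ gives the desired twisted complex, and its convolution is by construction $\mathrm{Con}(C_\bullet')$.

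The steps in order would therefore be: (i) assemble the SDR data and verify the side conditions; (ii) observe the nilpotency of $\psi\tau$ and invoke HPL to produce the deformation retract $\nabla$ together with a new differential on $\bigoplus_i C_i'[i]$; (iii) read off the blocks of this new differential to obtain the formulas for $f_{ji}'$; and (iv) verify condition (3) of Definition~\ref{def:TwistedComplex} for the $f_{ji}'$, which follows from $(\delta_0'+\tau')^2=0$ by projecting onto the block $C_i'\to C_j'$. The main obstacle I anticipate is bookkeeping the signs coming from the shifts $[i]$ in the convolution, combined with the sign $(-1)^{j-i+1}$ appearing in the twisted complex axiom: the HPL identity produces exactly the right collection of composites, but confirming that each sign matches the convention in Definition~\ref{def:TwistedComplex} requires either a careful single-term audit or an induction on $j-i$ with the base case $j-i=1$ handled as an ordinary mapping cone. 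I would likely proceed inductively on $j-i$, since this lets one reuse the base-case verification (essentially the deformation retract $\mathrm{Cone}(f_{10})\to\mathrm{Cone}(\Psi_1 f_{10}\Psi_0')$) while avoiding a bulk sign computation.
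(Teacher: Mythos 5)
Your proposal is correct. Note that the paper itself states Proposition \ref{prop:TwistedCon} without proof (it is recorded as a known fact, with only the remark that the length-one hypothesis can be relaxed), so there is no in-text argument to compare against; the homological perturbation lemma route you describe is the standard proof and is precisely where the stated formula comes from. With $\tau=\sum_i f_{i+1,i}$ viewed as a perturbation of the block-diagonal differential on $\bigoplus_i C_i[i]$, the nilpotency of $\psi\tau$ (it strictly raises the index $i$ and only finitely many $C_i$ are nonzero) makes the perturbation series finite, the $(j,i)$ block of the perturbed differential is $\Psi\tau(\psi\tau)^{j-i-1}\Psi'$, which is exactly the zigzag $f_{ji}'=\Psi_j f_{j,j-1}\psi_{j-1}\cdots f_{i+1,i}\Psi_i'$, and condition (3) of Definition \ref{def:TwistedComplex} for the $f_{ji}'$ follows by squaring the perturbed differential, as you say. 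The one point that genuinely needs care is the one you flag: the alternating signs $(-1)^i d_{C_i}$ on the diagonal of $d_{\mathrm{Con}}$ mean the blockwise SDR data must be adjusted by signs (e.g. replacing $\psi_i$ by $(-1)^i\psi_i$) before the homotopy identity and side conditions hold for the assembled retract; either your one-time sign audit or your induction on $j-i$, with the base case being the mapping-cone retract $\mathrm{Cone}(f_{10})\to\mathrm{Cone}(\Psi_1 f_{10}\Psi_0')$, settles this, and the paper's own sign conventions are loose enough (cf.\ the exponent in condition (3)) that this is bookkeeping rather than an obstruction.
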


\subsection{Virtual crossing filtrations\label{sub:AppenVirFil} }

Recall the complexes $C(\arcs)$, $C(\vir)$, and $C(\wideedge)$
given by 
\[
C(\arcs)=\left[\begin{array}{c}
y_{1}-x_{1}\\
y_{2}-x_{2}
\end{array}\right]_{E},\quad C(\vir)=\begin{bmatrix}y_{1}-x_{2}\\
y_{2}-x_{1}
\end{bmatrix}_{E},\quad C(\wideedge)=\begin{bmatrix}y_{1}+y_{2}-x_{1}-x_{2}\\
(y_{1}-x_{1})(y_{1}-x_{2})
\end{bmatrix}_{E},
\]

where $E=\QQ[x_{1},x_{2},y_{1},y_{2}].$ 
\begin{defn}
\label{def:GradedHom}Let $R$ be a commutative ring and let $C$
and $D$ be chain complexes of objects in an additive category. Let
$\bullet${[}i{]} denote the homological shift functor given by $C_{j+i}[i]=C_{j}$.
Define $\mbox{Hom}^{k}(C,D)$ to be the $\ZZ$-module of chain maps
$f:C\to D[-k]$ quotiented by the submodule of chain maps homotopic
to the zero map.

In \cite{KR07} Khovanov and Rozansky make the following observation.\end{defn}
\begin{prop}
There exists a unique map $F\in\mbox{Hom}^{1}(C(\arcs),q^{2}C(\vir))$,
up to rescaling, such that $\mbox{Cone}(F)$ is homotopy equivalent
to $C(\wideedge)$. Likewise there exists a unique map, up to rescaling,
$G\in\mbox{Hom}^{1}(C(\vir),q^{2}C(\arcs))$ such that $\mbox{Cone}(G)$
is homotopy equivalent to $C(\wideedge).$
\end{prop}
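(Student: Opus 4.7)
The plan is to put all three complexes in a common form via row operations, explicitly construct $F$ and $G$ as chain maps between small Koszul complexes, verify the cone identification by Gaussian elimination, and establish uniqueness via a graded Hom computation. First, I would apply the change-of-basis lemma from earlier to rewrite $C(\arcs) \simeq [y_1+y_2-x_1-x_2,\, y_1-x_1]$ and $C(\vir) \simeq [y_1+y_2-x_1-x_2,\, y_1-x_2]$, exploiting the identity $(y_1-x_1)+(y_2-x_2) = (y_1-x_2)+(y_2-x_1) = y_1+y_2-x_1-x_2$. Since $C(\wideedge) = [y_1+y_2-x_1-x_2,\,(y_1-x_1)(y_1-x_2)]$ is built on the same shared factor, the problem reduces---after peeling off the tensor factor $[y_1+y_2-x_1-x_2]$---to producing a map between single-generator Koszul complexes $[y_1-x_1]$ and an appropriately shifted copy of $[y_1-x_2]$ whose cone is $[(y_1-x_1)(y_1-x_2)]$.

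Next, I would exhibit $F$ explicitly as the chain map between these reduced complexes determined (up to scalar) by the chain-map condition and the prescribed quantum/homological shift. The cone identification then follows from the standard Koszul-complex identity $[p,q] \simeq \Cone\bigl([p]\xrightarrow{\cdot q}[p]\bigr)$ applied to $p = y_1-x_1$ and $q = y_1-x_2$, followed by Gaussian elimination (Proposition~\ref{prop:GaussianElim}) to collapse the resulting four-term complex down to the two-term Koszul complex $[(y_1-x_1)(y_1-x_2)]$. Tensoring back with $[y_1+y_2-x_1-x_2]$ gives $\Cone(F)\simeq C(\wideedge)$. The map $G$ and its cone identification are constructed symmetrically by swapping $x_1 \leftrightarrow x_2$ (equivalently, by factoring the Koszul differential in the opposite order).

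For uniqueness up to rescaling, I would compute the specific graded piece of $\mbox{Hom}^1(C(\arcs), q^2 C(\vir))$ in which $F$ lives by applying $\mbox{Hom}_E(-, q^2 C(\vir))$ to the Koszul resolution $C(\arcs)$ of $E/(y_1-x_1, y_2-x_2)$ and taking cohomology restricted to the appropriate quantum and Hochschild degree; this graded piece should turn out to be one-dimensional over $\QQ$. The main obstacle is careful tracking of all three gradings ($q$, $a$, and the Koszul/homological grading) through each change of basis and each cone calculation, and in particular ensuring that $F$ lives in the correct graded piece where uniqueness holds---the ungraded $\mbox{Hom}^1$ is much larger, and only the specific graded component identified by the shift $q^2 C(\vir)[-1]$ collapses to a single scaling class.
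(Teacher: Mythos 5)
Your plan is close to a complete argument (and, for what it is worth, the paper itself gives no proof of this proposition: it attributes the observation to Khovanov--Rozansky \cite{KR07} and merely records the explicit saddle map $G$ of Figure \ref{fig:VirSaddle}, which your explicit construction would reproduce). But one step as written is wrong: the identity $[p,q]\simeq\Cone\bigl([p]\xrightarrow{\cdot q}[p]\bigr)$ does not do what you ask of it. Its right-hand side is the Koszul complex on the regular sequence $(y_1-x_1,\,y_1-x_2)$, which resolves $E/(y_1-x_1,y_1-x_2)$; this is \emph{not} homotopy equivalent to $[(y_1-x_1)(y_1-x_2)]$, which resolves $E/\bigl((y_1-x_1)(y_1-x_2)\bigr)$, and Gaussian elimination cannot collapse the former to the latter because its differential has no unit entries (Proposition \ref{prop:GaussianElim} requires an isomorphism component to cancel), nor could any homotopy equivalence, since the homologies differ. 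What you actually need---and what the rest of your plan already supplies---is the cone of the \emph{degree-one} map $[y_1-x_1]\to q^{2}[y_1-x_2]$ whose single grading-allowed component is a nonzero scalar $c$ (the component goes from the $q^{2}aE$ term of the source to the $q^{2}E$ term of the target, so internal degree forces it to be a scalar). That cone is $q^{2}aE\oplus q^{4}aE\to E\oplus q^{2}E$ with differential $\begin{psmallmatrix}y_1-x_1&0\\ c& y_1-x_2\end{psmallmatrix}$; one Gaussian elimination on the unit entry $c$ leaves $q^{4}aE\xrightarrow{-c^{-1}(y_1-x_1)(y_1-x_2)}E\simeq[(y_1-x_1)(y_1-x_2)]$, and tensoring back with $[y_1+y_2-x_1-x_2]$ gives $\Cone(F)\simeq C(\wideedge)$. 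With that substitution (and the symmetric one for $G$) the existence half goes through.

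Your uniqueness plan is essentially right and does work: in the graded piece pinned down by the shift $q^{2}$, a homological-degree-one map has only scalar components, the chain-map condition cuts the two scalars in each component down to a single parameter, and there are no nonzero homotopies in that internal degree, so the relevant piece of $\mbox{Hom}^{1}(C(\arcs),q^{2}C(\vir))$ is one-dimensional over $\QQ$. Two small points you should add: rescaling by a nonzero scalar does not change the homotopy type of the cone, and the zero map must be excluded separately---its cone is the direct sum of (shifted copies of) $C(\arcs)$ and $q^{2}C(\vir)$, whose homology has the wrong graded dimensions to be $C(\wideedge)$. With those remarks the ``unique up to rescaling'' claim follows exactly as you intend.
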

We will call the maps $F$ and $G$ \emph{virtual saddle maps.} In
our presentation of $C(\arcs)$ and $C(\vir)$ we can write the virtual
saddle maps explicitly. We write $G$ below and leave it to the reader
to write the analogous map $F$.

\begin{figure}[h]
\begin{equation*}
\begin{diagram}
a^2q^4E & \rTo{\left(\begin{smallmatrix} x_1-y_2\\x_2-y_1 \end{smallmatrix}\right)} & aq^2 E\oplus aq^2 E & \rTo{\left(\begin{smallmatrix} x_2-y_1 & y_2-x_1 \end{smallmatrix}\right)}& E\\
  & \rdTo{\left(\begin{smallmatrix} \,1 \\ -1 \end{smallmatrix} \right)}&            &\rdTo{\left(\begin{smallmatrix} 1 & 1 \end{smallmatrix} \right)} &  \\
a^2q^6E & \rTo_{\left(\begin{smallmatrix} x_1-y_1\\x_2-y_2 \end{smallmatrix}\right)} & aq^4 E \oplus aq^4 E & \rTo_{\left(\begin{smallmatrix} x_2-y_2 & y_1-x_1 \end{smallmatrix}\right)}& q^2E.\\
\end{diagram}
\end{equation*}

\caption{An explicit presentation of the virtual saddle map $G$. \label{fig:VirSaddle}}
\end{figure}

The mapping cone presentations give rise to filtrations. In particular,
$\mbox{Cone}(F)$ has $q^{2}C(\vir)$ as a submodule and $C(\arcs)$
as the quotient $Cone(F)/q^{2}C(\vir)$. We call this filtration the
\emph{negative filtration} and denote it as $C_{-}(\wideedge).$ Likewise
$\mbox{Cone}(G)$ has $q^{2}C(\arcs)$ as a submodule and $C(\vir)$
as the quotient $Cone(F)/q^{2}C(\arcs)$. We call this filtration
the \emph{positive filtration} and denote it as $C_{+}(\wideedge).$ 

We will often identify $C_{-}(\wideedge)$ with $\mbox{Cone}(F)$
(and $C_{+}(\wideedge)$ with $\mbox{Cone}(G)$) and consider the
filtered complexes as mapping cones. This process simplifies the differential
$d_{c}$ so that it can be presented in the following manner (as proven
in \cite{AbRoz14}). 
\begin{prop}
$C(\posx)$ is homotopy equivalent to the bicomplex $C(\arcs)\xrightarrow{\phi_{i}}tq^{-2}C_{+}(\wideedge)$,
where $\phi_{i}$ is the canonical inclusion of $C(\arcs)$ into $\mbox{Cone}(G).$
Suppose $C(\arcs)$ has the trivial filtration, then $\phi_{i}$ is
a filtered map with respect to the filtration on $C_{+}(\wideedge)$
and thus $C(\posx)$ is a filtered bicomplex. 

Likewise $C(\negx)$ is homotopy equivalent to the bicomplex $t^{-1}C(\wideedge)\xrightarrow{\phi_{o}}C(\arcs)$,
where $\phi_{i}$ is the canonical projection of $C(\arcs)$ from
$\mbox{Cone}(F).$ $\phi_{o}$ is a filtered map with respect to the
filtration on $C_{-}(\wideedge)$ and thus $C(\negx)$ is a filtered
bicomplex.
\end{prop}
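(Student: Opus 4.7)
My plan is to reduce the statement to the defining property of $G$ (and dually $F$), namely that $\mathrm{Cone}(G) = C_+(\wideedge)$ is homotopy equivalent to $C(\wideedge)$, and then identify the transported connecting map with the canonical inclusion $\phi_i$. First I would recall that $C(\posx)$ is by definition $\mathrm{Cone}(\chi_i \colon C(\arcs) \to tq^{-2}C(\wideedge))$, and pick a homotopy equivalence $h \colon C(\wideedge) \xrightarrow{\simeq} C_+(\wideedge)$ together with SDR data as in Section \ref{sub:TwistedCom}. By naturality of the mapping cone construction under homotopy equivalence of the target, I get
\[
C(\posx) \;\simeq\; \mathrm{Cone}\bigl(h\circ \chi_i \colon C(\arcs) \to tq^{-2}C_+(\wideedge)\bigr),
\]
so the whole problem reduces to showing that the composite $h\circ\chi_i$ coincides, up to chain homotopy (and possibly rescaling), with the canonical inclusion $\phi_i$ of the submodule $q^2 C(\arcs) \subset C_+(\wideedge)$ after applying the overall $tq^{-2}$ shift.

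The key step is this identification, and I expect it to be the main obstacle. There are two natural approaches. The abstract approach is to appeal to a uniqueness statement analogous to the one recalled for the virtual saddle maps: any degree-$(q^0,a^0,t^1)$ chain map from $C(\arcs)$ into $tq^{-2}C_+(\wideedge)$ that produces a mapping cone homotopy equivalent to $C(\posx)$ should lie in a one-parameter family, and both $h\circ\chi_i$ and $\phi_i$ have the right cone (since $\mathrm{Cone}(\phi_i)$ is readily seen to be homotopy equivalent to $C(\posx)$ by a direct Koszul calculation). The concrete approach is to use the explicit matrix for $\chi_i$ from (\ref{eq:ChiInDef}), the explicit $G$ dual to the map displayed in Figure \ref{fig:VirSaddle}, and then apply Gaussian elimination (Proposition \ref{prop:GaussianElim}) to the resulting twisted complex to check that all components of $h\circ\chi_i$ landing in the $C(\vir)$-quotient summand are killed by an explicit homotopy, leaving only the inclusion into the $q^2C(\arcs)$ summand. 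I would favor the second approach because it simultaneously gives the homotopy data needed for the filtration claim.

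With the identification in hand, the filtration statement is immediate. The mapping cone presentation equips $C_+(\wideedge)$ with a two-step filtration whose subcomplex is $q^2C(\arcs)$ and whose quotient is $C(\vir)$. Give $C(\arcs)$ the trivial one-step filtration. The map $\phi_i$ lands by construction in the submodule $q^2C(\arcs)\subset C_+(\wideedge)$ (shifted by $tq^{-2}$), so it is a filtered chain map of filtered bicomplexes; taking the mapping cone therefore produces a filtered bicomplex structure on $C(\posx)$. Finally, the statement for $C(\negx)$ is proven by the mirror argument: replace $C(\wideedge)$ by $C_-(\wideedge) = \mathrm{Cone}(F)$ inside the definition $C(\negx) = \mathrm{Cone}(\chi_o \colon t^{-1}C(\wideedge) \to C(\arcs))$, identify the transported $\chi_o$ with the canonical projection $\phi_o$ onto the quotient $C(\arcs)$ of $C_-(\wideedge)$ (via the same Gaussian elimination/uniqueness argument, now dualized), and observe that a projection onto a quotient is automatically a filtered map when the target carries the trivial filtration.
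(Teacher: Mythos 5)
The paper itself does not prove this proposition: it states it with the attribution ``as proven in \cite{AbRoz14},'' so there is no internal argument to compare against. Judged on its own terms, your outline is the natural one and I believe it is essentially the route the original paper takes: realize $C(\posx)$ as $\mathrm{Cone}(\chi_i)$, replace $tq^{-2}C(\wideedge)$ by the homotopy-equivalent filtered model $tq^{-2}C_+(\wideedge)=tq^{-2}\mathrm{Cone}(G)$, transport $\chi_i$ across that equivalence, and identify the transported map with the canonical inclusion $\phi_i$. Once that identification is in hand, the filtration claim is indeed immediate, because $\phi_i$ lands in the subcomplex $q^2C(\arcs)\subset C_+(\wideedge)$ by construction.

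The gap is in the identification step, which you flag but do not close. Your ``abstract'' route --- that any map $C(\arcs)\to tq^{-2}C_+(\wideedge)$ whose cone is $C(\posx)$ must agree with $\phi_i$ up to scaling --- does not follow from the uniqueness statement for the virtual saddle maps, and in general ``same mapping cone up to homotopy equivalence'' does \emph{not} imply ``homotopic map up to scalar.'' To make this work you would need to compute $\mathrm{Hom}^1\bigl(C(\arcs),\,q^{-2}C_+(\wideedge)\bigr)$ and verify it is one-dimensional over $\QQ$, plus check that $h\circ\chi_i$ and $\phi_i$ are both nonzero there. Your ``concrete'' route (write down $h$ explicitly, compose with the matrix for $\chi_i$ from \eqref{eq:ChiInDef}, and Gaussian-eliminate) is sound and is in fact what is needed for the filtered claim --- a homotopy relation between $h\circ\chi_i$ and $\phi_i$ is not quite enough; you need an honest chain isomorphism of bicomplexes $\mathrm{Cone}(h\circ\chi_i)\cong\mathrm{Cone}(\phi_i)$, which is exactly what Gaussian elimination with explicit SDR data produces via Proposition~\ref{prop:TwistedCon}. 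So the correct conclusion is that your plan is the right one, but only the second branch of your step (4) actually completes the proof, and that branch is left as a computation. The mirror argument for $C(\negx)$ with $C_-(\wideedge)=\mathrm{Cone}(F)$ and the canonical projection $\phi_o$ is then fine as stated.
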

We can extend this filtration to any tangle or link diagram via the
tensor product filtration. We will also refer to the given filtration
on the bicomplex associated to a tangle as a \emph{virtual crossing
filtration. }The following theorem was the main focus of \cite{AbRoz14}.
\begin{thm}
Let $\beta$ be a braid on $n$ strands, and $L_{\beta}$ denote its
circular closure. Then the virtual crossing filtration on $C(\beta)$
is invariant under Reidemeister IIa and is violated by at most two
levels by Reidemeister III. Furthermore, the virtual crossing filtration
on $\HH(L_{\beta})$ is invariant under the Markov moves (up to a
possible shift in filtration).
\end{thm}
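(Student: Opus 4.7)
The plan is to establish each assertion separately by tracking the virtual crossing filtration through the known homotopy equivalences of Proposition~\ref{prop:CatMOYMoves}. Recall that the virtual crossing filtration arises from the mapping cone presentations $C(\posx)\simeq C(\arcs)\xrightarrow{\phi_{i}} tq^{-2}C_{+}(\wideedge)$ and $C(\negx)\simeq t^{-1}C(\wideedge)\xrightarrow{\phi_{o}}C(\arcs)$, extended multiplicatively by tensor product to arbitrary braid words, so the filtration level records how many crossings have been resolved as wide edges rather than as smoothings.

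For Reidemeister IIa, I would expand the complex of a positive--negative crossing pair (with matching strand orientations) as a tensor product of the two crossing mapping cones. The resulting bicomplex has four summands in the associated graded; two of these are governed by the MOY IIa isomorphism (\ref{eq:CatMOYII}). Using Gaussian elimination (Proposition~\ref{prop:GaussianElim}) to kill the contractible portion, the key verification is that the eliminated entries are units concentrated at a single filtration level, so the reduction descends to each level of the filtered complex; the resulting simplified complex is the trivially filtered complex for parallel arcs, proving filtered invariance.

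For Reidemeister III, the complexes on either side each have eight smoothings, and (\ref{eq:CatMOYIII}) provides the underlying homotopy equivalence. I would unfold both sides into their mapping cone towers, apply Proposition~\ref{prop:TwistedCon} to transport the deformation retract from the MOY level up to the crossing level, and then compare filtration levels. The two-level violation arises because (\ref{eq:CatMOYIII}) pairs $C(\G_{3a})\oplus q^{2} C(\G_{3b})$ with $q^{2} C(\G_{3c})\oplus C(\G_{3d})$, and $\G_{3a},\G_{3d}$ contain one more wide edge than $\G_{3b},\G_{3c}$; threading this discrepancy through Gaussian elimination is what controls the filtration shift.

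For the Markov moves, the first move (braid conjugation) acts by permuting tensor factors in the closure, which preserves the filtration exactly. For stabilization, the added curl simplifies via MOY I (\ref{eq:CatMOYI}); the prefactor $\frac{1+aq^{4}}{1-q^{2}}$ shifts Hochschild degree by one and correspondingly shifts the filtration by a predictable amount. The main obstacle I expect is the Reidemeister III case: writing down the filtered deformation retract explicitly and checking that the intermediate Gaussian eliminations do not accidentally increase the filtration discrepancy beyond two levels requires careful explicit matrix computations that track each module's filtration level, together with an argument that the homotopies $\psi_i$ produced by Proposition~\ref{prop:TwistedCon} remain filtered maps.
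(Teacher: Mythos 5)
The paper does not prove this theorem: the sentence immediately preceding it reads ``The following theorem was the main focus of \cite{AbRoz14},'' so the result is quoted from the author--Rozansky paper on virtual crossing filtrations rather than established here. There is therefore no in-paper proof against which to compare your proposal; the most one can do is judge whether your outline is a plausible reconstruction of the argument in \cite{AbRoz14}.

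Your general strategy --- unfold each crossing as a filtered mapping cone, tensor up to the whole braid word, transport the MOY-level homotopy equivalences to the crossing level via twisted-complex machinery, and verify that Gaussian elimination respects filtration levels --- is the right shape for this kind of argument and matches what the framework in \S\ref{sub:AppenVirFil} is set up to support. However, two concrete points need attention. First, in your Reidemeister III discussion you write that $\Gamma_{3a},\Gamma_{3d}$ contain ``one more wide edge'' than $\Gamma_{3b},\Gamma_{3c}$; in fact $\Gamma_{3a}$ and $\Gamma_{3d}$ each have three MOY vertices while $\Gamma_{3b}$ and $\Gamma_{3c}$ each have one, so the gap is two, which is exactly where the ``at most two levels'' bound in the statement comes from --- your count as written would only account for a one-level violation. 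Second, your Markov move argument treats conjugation as a simple permutation of tensor factors, but the filtration on the circular closure is taken after tensoring with $C(1_n)$ over the edge ring, and showing that the resulting filtered complex is invariant under conjugation (rather than merely shifted) requires checking that the cyclic reordering of factors induces a filtered homotopy equivalence, not just an equivalence of underlying complexes. These are exactly the places where, as you acknowledge, explicit filtered SDR data and a careful tracking of the homotopies $\psi_i$ from Proposition~\ref{prop:TwistedCon} would be needed, and they constitute the genuine content of the theorem.
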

We now focus on describing the filtrations on MOY graphs, which will
be useful in proving Lemma \ref{lem:CatMOYIIb}. A \emph{signed MOY
graph} is a MOY graph where each vertex is marked by a sign $+$ or
$-$. Suppose $\Gamma$ is a signed MOY graph marked so that it is
partitioned into graphs of the form $\wideedge$and $\arc$, then
we can define a filtration on $C(\Gamma).$ To each MOY vertex marked
with a $+$ we associate $C_{+}(\wideedge)$, and to each MOY vertex
marked with a $-$ we associate $C_{-}(\wideedge).$ We give the trivial
filtration to $C(\arc).$ Then the filtration on $C(\Gamma)$ is given
by the tensor product filtration. 

If we choose different sign assignments, then we receive homotopy
equivalent complexes for $C(\Gamma),$ but not necessarily \emph{filtered
}homotopy equivalent complexes (for example, $C_{-}(\wideedge)$ and
$C_{+}(\wideedge)$ are not filtered homotopy equivalent). For this
reason, if $\underline{\varepsilon}$ is a assignment of signs to
each MOY vertex of $\Gamma$, then we will write $C_{\underline{\varepsilon}}(\Gamma)$
for the filtered complex we get from the above construction. 

With this construction in mind, we may present every MOY graph as
an \emph{iterated mapping cone} or \emph{convolution} of graphs with
only virtual crossings and no MOY vertices. To do this we use twisted
complexes.
\begin{example}
We now consider the signed MOY graph $\Gamma=\edgeedge$ whose left
vertex is labeled by $+$ and right vertex is labeled by $-$. We
can present $C_{+-}(\Gamma)$ as the convolution of the following
twisted complex:

\begin{equation}
q^{4}C(\virarc)\xrightarrow{\begin{pmatrix}1\otimes F\\
G\otimes1
\end{pmatrix}}q^{2}C(\virvir)\oplus q^{2}C(\arcarc)\xrightarrow{\begin{pmatrix}-G\otimes1 & 1\otimes F\end{pmatrix}}C(\arcvir)\label{eq:MOYIIbCon}
\end{equation}

Note that this twisted complex does not have any nonzero maps of length
$\geq2$. 
\end{example}
We end this section with a couple of simplifications which will be
necessary in the next section. The following lemma is proven in \cite{AbRoz14}.
\begin{lem}
\label{lem:MultCoMult} Let $\hat{\vir}$ and $\hat{\arcs}$ denote
the partial braid closure of $\vir$and $\arcs$ respectively (see
Figure \ref{fig:PartialBraidClosure}). We write $\hat{G}:C(\hat{\vir})\to q^{2}C(\hat{\arcs})$
for the map induced by $G$ under braid closure, and similarily for
$\hat{F}.$ Then \end{lem}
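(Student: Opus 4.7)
The plan is to prove the lemma by direct computation with the explicit formulas for the virtual saddle maps given in Figure~\ref{fig:VirSaddle}, combined with repeated applications of Lemma~\ref{lem:MarkRemoval} to eliminate internal variables introduced by the partial closure. First I would make the partial braid closures $\hat{\vir}$ and $\hat{\arcs}$ completely explicit: the closure identifies a specified pair of boundary endpoints on each diagram, which at the level of edge rings means passing to an appropriate quotient of $E = \QQ[x_1,x_2,y_1,y_2]$. Because the virtual crossing transposes its two strands, closing one end of $\vir$ reconnects the two strands into a single arc, whereas the corresponding closure of $\arcs$ produces an arc together with a disjoint circle. By equation~\eqref{eq:CatMOY0Rewrite}, the circle contributes a factor of $\frac{1+aq^2}{1-q^2}\QQ$, so after mark removal both $C(\hat{\vir})$ and $C(\hat{\arcs})$ simplify to explicit (shifted) copies of a single arc complex, with the circle factor present only on the $\arcs$ side.

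Next I would transport the maps $F$ and $G$ through this reduction. Starting from the explicit matrices of Figure~\ref{fig:VirSaddle}, I apply the quotient imposed by the closure relations and then run Gaussian elimination (Proposition~\ref{prop:GaussianElim}) to cancel the contractible summands exposed by mark removal. After this reduction, $\hat{G}$ should factor through the canonical comultiplication-type map that inserts a unit into the circle factor (tensored with a degree-shifted identity on the remaining arc piece), and symmetrically $\hat{F}$ should factor through the corresponding multiplication that evaluates the circle against a top-degree cogenerator. Identifying the reduced maps with their Frobenius-algebra analogues on the resulting decomposition of $C(\hat{\arcs})$ is then just a matter of matching bases.

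The main obstacle will be careful bookkeeping of the three gradings ($q$, $a$, and $t$) and of the signs accumulated by the Koszul differentials during mark removal. In particular, each closed arc contributes a Koszul factor $[y-x]$ that raises Hochschild degree, and one must verify that these shifts combine with the intrinsic shifts already built into $F$ and $G$ to produce maps of the claimed total tridegree. A secondary difficulty is fixing the convention for which pair of endpoints is glued in the partial closure, as this affects the signs in the final formulas; it amounts to a finite check, but it is the step most likely to introduce bookkeeping errors and must be performed consistently with the usage of the lemma in the proof of Lemma~\ref{lem:CatMOYIIb}.
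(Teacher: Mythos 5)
The paper itself does not prove Lemma \ref{lem:MultCoMult}; it cites \cite{AbRoz14} for the proof, so there is no internal argument to compare against. Your setup is sound as far as it goes: the explicit description of the partial closures (closing $\vir$ yields a single arc, closing $\arcs$ yields an arc disjoint from a circle), the appeal to (MOY 0) for the circle factor, and the plan to transport $F$ and $G$ through mark removal and Gaussian elimination are all the right ingredients, and the grading/sign bookkeeping you flag is indeed the tedious part.

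However, the proposal contains a conceptual error about what the lemma is actually asserting, and that error would make the proof fail at the decisive step. You write that after reduction $\hat{G}$ ``should factor through the canonical comultiplication-type map that inserts a unit into the circle factor,'' with $\hat{F}$ playing the ``symmetric'' multiplication role. But conclusion (1) of the lemma asserts $\Cone(\hat{G})\simeq\Cone(0)$, i.e.\ $\hat{G}$ is null-homotopic and the filtered cone splits as a direct sum. That is the opposite of factoring through a unit insertion, whose cone would be a nontrivial extension. Conclusion (2), by contrast, says $\Cone(\hat{F})$ is the cone of a genuinely nonzero map $(1\,0)$, so $\hat{F}$ survives as a nontrivial morphism. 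The entire content of the lemma is that $\hat{F}$ and $\hat{G}$ are \emph{not} dual or symmetric: one vanishes up to homotopy and the other does not. This asymmetry between the two virtual saddle maps under partial closure is precisely what drives the (MOY IIb*) deformation and the failure of Reidemeister IIb in Theorem \ref{thm:CatRIIb}, and it is used verbatim in the proof of Lemma \ref{lem:CatMOYIIb} (``Lemma \ref{lem:MultCoMult} states that $\hat{G}$ is homotopy equivalent to the zero map''). A proof organized around matching bases for a pair of mutually dual Frobenius-algebra maps would never surface the vanishing of $\hat{G}$; you must carry the explicit computation far enough to show that the induced class in $\mbox{Hom}^{1}(C(\hat{\vir}),q^{2}C(\hat{\arcs}))$ is zero, and, separately, that the class of $\hat{F}$ in $\mbox{Hom}^{1}(C(\hat{\arcs}),q^{2}C(\hat{\vir}))$ is not and is represented by the specific projection $(1\,0)$ of conclusion (2).
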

\begin{enumerate}
\item $\Cone(\hat{G})\simeq\Cone(0)\simeq C(\arc)\oplus q^{2}C(\arc\circleO)$ 
\item $\Cone(\hat{F})\simeq\Cone\left(aq^{2}C(\arc)\oplus\dfrac{1+aq^{4}}{1-q^{2}}C(\arc)\xrightarrow{(1\,0)}q^{2}C(\arc)\right)$
\item $\Cone(\hat{F})\simeq\Cone(\hat{G})\simeq\dfrac{1+aq^{4}}{1-q^{2}}C(\arc)$
\end{enumerate}
\emph{Furthermore, the homotopy equivalences in (1) and (2) are filtered.}

\begin{figure}[h]

\scalebox{.8}{\begin{tikzpicture}
\draw (0,0)--(0,1); \draw (2,0)--(2,1); \draw (3,0)--(3,1); \node at (1,0.5) {$\cdots$};
\draw[->] (0,2)--(0,3); \draw[->] (2,2)--(2,3); \draw[->] (3,2)--(3,3); \node at (1,2.5) {$\cdots$};
\draw (-0.2,1) rectangle (3.2,2); \node at (1.5,1.5) {$\Gamma$};
\draw plot [smooth] coordinates {(3,3)(3.2,3.2)(3.4,2)(3.4,1)(3.2,-0.2)(3,0)};
\end{tikzpicture}}

\caption{Partial braid closure of a virtual braidlike MOY graph.\label{fig:PartialBraidClosure}}

\end{figure}
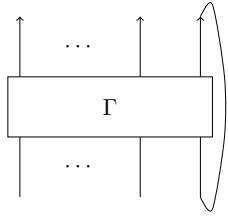

\subsection{A proof of Lemma \ref{lem:CatMOYIIb}\label{sub:AppenLemmaPf} }

We begin by restating Lemma \ref{lem:CatMOYIIb}.
\begin{lem}
[See Lemma \ref{lem:CatMOYIIb}]Let $\tilde{C}(\arcsud)=\frac{q^{2}+aq^{6}}{1-q^{2}}C(\arcsud)$,
then the following diagram is commutative:
\end{lem}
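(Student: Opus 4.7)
The plan is to use the virtual crossing filtration to realize $C(\edgeedge)$ as the convolution of a twisted complex whose constituent pieces involve only arcs and virtual crossings, and then to simplify via Lemma \ref{lem:MultCoMult} and Gaussian elimination.

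First, I apply the positive virtual crossing filtration to the left MOY vertex and the negative filtration to the right MOY vertex of $\edgeedge$; as in the example immediately preceding Lemma \ref{lem:MultCoMult}, this realizes $C(\edgeedge)$ as the convolution of the twisted complex (\ref{eq:MOYIIbCon}),
\[
q^{4}C(\virarc)\xrightarrow{\begin{psmallmatrix}1\otimes F\\ G\otimes 1\end{psmallmatrix}} q^{2}C(\virvir)\oplus q^{2}C(\arcarc) \xrightarrow{\begin{psmallmatrix}-G\otimes 1 & 1\otimes F\end{psmallmatrix}} C(\arcvir).
\]
In each of these four tangles the middle pair of strands forms a local closure configuration, so each constituent complex can be rewritten by applying Lemma \ref{lem:MultCoMult} to that middle-strand partial closure.

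Next, I invoke Lemma \ref{lem:MultCoMult} to replace each term in the twisted complex: the lemma evaluates each middle-strand partial closure, producing factors of $\tfrac{1+aq^{4}}{1-q^{2}}$ together with simpler configurations in which the middle pair is replaced by a single arc. Transporting the original connecting maps through these homotopy equivalences via Proposition \ref{prop:TwistedCon} produces a larger twisted complex whose entries are shifted copies of $C(\viredge)$ and $\tilde{C}(\arcsud)$, with new connecting maps given by the explicit formula in that proposition. Repeated application of Gaussian elimination (Proposition \ref{prop:GaussianElim}) to the identity components of these new maps cancels all but one copy of $C(\viredge)$ and one copy of $\tilde{C}(\arcsud)$. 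The prefactor $\frac{q^{2}+aq^{6}}{1-q^{2}}$ on the surviving $\tilde{C}(\arcsud)$ arises from combining the quantum shifts $q^{4}, q^{2}, 1$ from (\ref{eq:MOYIIbCon}) with the $\frac{1+aq^{4}}{1-q^{2}}$ factor produced by Lemma \ref{lem:MultCoMult}.

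Finally, I track the canonical inclusion $\iota$ and projection $\pi$ of the filtered complex through the chain of homotopy equivalences. Because $\tilde{C}(\arcsud)$ survives as the lowest filtration level throughout the reduction, $\iota$ is carried to a map of the form $\begin{psmallmatrix}*\\1\end{psmallmatrix}$, where $*$ is the residual component of the $d_{c}$ differential connecting the virtual and arc filtration pieces, and $\pi$ is carried to projection onto $C(\viredge)$. The main obstacle will be the bookkeeping: carefully tracking grading shifts through each partial closure, ensuring that exactly the correct summands survive every Gaussian elimination, and verifying that the transported inclusion really has the asserted form $\begin{psmallmatrix}*\\1\end{psmallmatrix}$ rather than an arbitrary nonzero map.
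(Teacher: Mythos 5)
Your overall plan is the paper's: impose the $+-$ virtual filtration so that $C(\edgeedge)$ is the convolution of (\ref{eq:MOYIIbCon}), simplify via Lemma \ref{lem:MultCoMult}, transfer the connecting maps with Proposition \ref{prop:TwistedCon}, Gaussian eliminate, and track $\iota$ and $\pi$. But there is a genuine gap in how you deploy Lemma \ref{lem:MultCoMult}. You propose to apply it ``to replace each term'' of (\ref{eq:MOYIIbCon}); the four constituent complexes $C(\virarc)$, $C(\virvir)$, $C(\arcarc)$, $C(\arcvir)$ contain no MOY vertices, so that lemma (which concerns $\Cone(\hat{F})$ and $\Cone(\hat{G})$, the partial closures of the wide edge) says nothing about them. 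Those objects simplify instead by virtual Reidemeister moves and (MOY 0), and the only factor such an evaluation produces is $\frac{1+aq^{2}}{1-q^{2}}$, from the free circle in $C(\arcarc)$ --- not the $\frac{1+aq^{4}}{1-q^{2}}$ you cite. The lemma's actual role is to control the two connecting maps of (\ref{eq:MOYIIbCon}) that are induced by the middle-strand closure, namely $\hat{G}\otimes 1$ and $1\otimes\hat{F}$: part (1) gives $\hat{G}\simeq 0$, which is what decouples the sub-twisted-complex that eventually convolves to $C(\viredge)$ (at the cost of a length-two map $\phi$, afterwards seen to be zero), and part (2) gives a filtered decomposition of the source of $\hat{F}$ under which $1\otimes\hat{F}$ has an explicit identity component. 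That identity component is what licenses the Gaussian elimination you invoke, and it is also where $\frac{1+aq^{4}}{1-q^{2}}$ (hence $q^{2}\cdot\frac{1+aq^{4}}{1-q^{2}}=\frac{q^{2}+aq^{6}}{1-q^{2}}$) really comes from: it encodes the structure of the map $\hat{F}$, not an evaluation of the objects.

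As written, then, the middle of your argument does not go through: after merely replacing the four objects by simpler ones, you have no grounds to assert that the transported differentials contain identity components, and you lose the filtered control (the equivalences in parts (1)--(2) of Lemma \ref{lem:MultCoMult} are filtered) needed to conclude that $\iota$ and $\pi$ are carried to maps of the form $\begin{psmallmatrix}*\\1\end{psmallmatrix}$ and $\begin{psmallmatrix}1&0\end{psmallmatrix}$, which is the real content of the statement rather than the bare direct-sum decomposition. Note also that the intermediate twisted complexes never have entries equal to copies of $C(\viredge)$; that complex only reappears at the end as the convolution of the leftover virtual-only pieces. The fix is to keep (\ref{eq:MOYIIbCon}) as it stands, rewrite the two closure-induced arrows as $\hat{G}\otimes 1$ and $1\otimes\hat{F}$, and apply parts (1) and (2) of Lemma \ref{lem:MultCoMult} to those arrows before eliminating.
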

\begin{diagram}
\tilde{C}(\arcsud)&\rTo{\iota}&C(\edgeedge)&\rTo{\pi}&C(\viredge)\\
\dTo{\simeq}&&\dTo{\simeq}&&\dTo{\simeq}\\
\tilde{C}(\arcsud)&\rTo_{\begin{psmallmatrix}*\\1\end{psmallmatrix}}&C(\viredge)\oplus \tilde{C}(\arcsud)&\rTo_{\begin{psmallmatrix}1&0\end{psmallmatrix}}&C(\viredge)
\end{diagram}where $\iota$ includes $\tilde{C}(\arcsud)$ as a subcomplex of $C(\edgeedge)$.
$\pi$ projects $C(\edgeedge)$ onto its quotient complex $C(\viredge)$
and $*$ represents some map from $\tilde{C}(\arcsud)$ to $C(\viredge)$. 
\begin{proof}
We prove the center isomorphism as keeping track of the explicit maps
from application of Gaussian elimination constructs the maps on the
bottom row of the commutative diagram above. Consider $C(\edgeedge)$
with the virtual filtration given by marking the left vertex by $+$
and the right vertex by $-$, then $C(\edgeedge)$ can be written
as the convolution of the twisted complex given in (\ref{eq:MOYIIbCon}).
We can rewrite the twisted complex using the maps $\hat{F}$ and $\hat{G}$
from above, to get the twisted complex
\begin{equation*}
q^{4}C(\hat{\vir})\otimes_{\QQ}C(\arcd)\xrightarrow{\begin{pmatrix}1\otimes F\\
\hat{G}\otimes1
\end{pmatrix}}q^{2}C(\arcsacross)\oplus q^{2}C(\arcarc)\xrightarrow{\begin{pmatrix}-G\otimes1 & 1\otimes\hat{F}\end{pmatrix}}C(\arc)\otimes_{\QQ}C(\hat{\vird})
\end{equation*}

where $C(\arcarc)$ can be viewed as both $C(\hat{\arcs})\otimes_{\QQ}C(\arcd)$
and $C(\arc)\otimes_{\QQ}C(\hat{\arcsd})$. Lemma \ref{lem:MultCoMult}
states that $\hat{G}$ is homotopy equivalent to the zero map, so
we can simplify the above complex to 

\begin{center}
\begin{minipage}{175mm}
\includegraphics{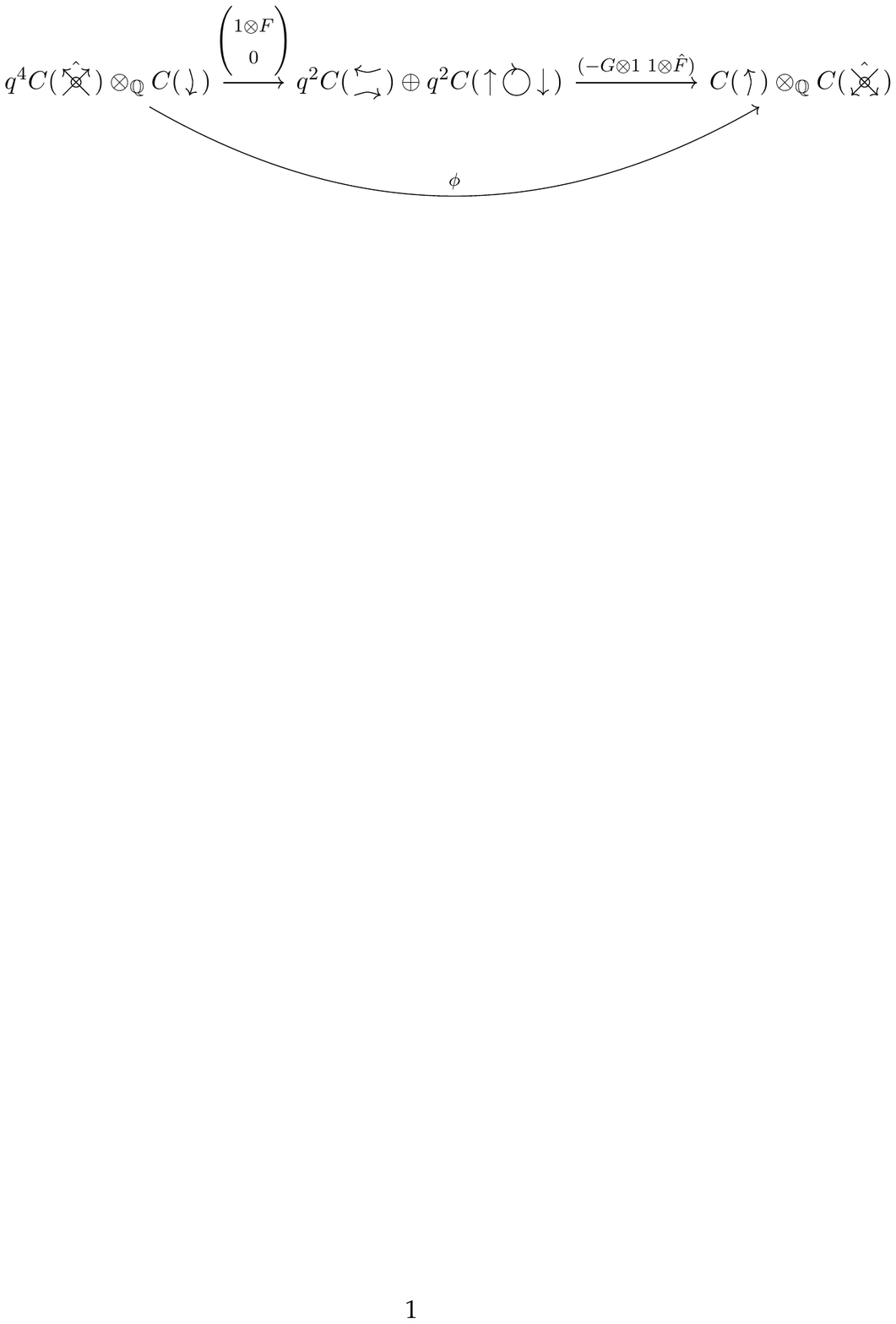}
\end{minipage}
\end{center}

Here the arrow $\phi$ represents a map of length two (as described
by Proposition \ref{prop:TwistedCon}) which has yet to be determined
explictly. We then use (2) of Lemma \ref{lem:MultCoMult} to rewrite
the above chain complex as
\begin{center}
\begin{minipage}{175mm}
\includegraphics{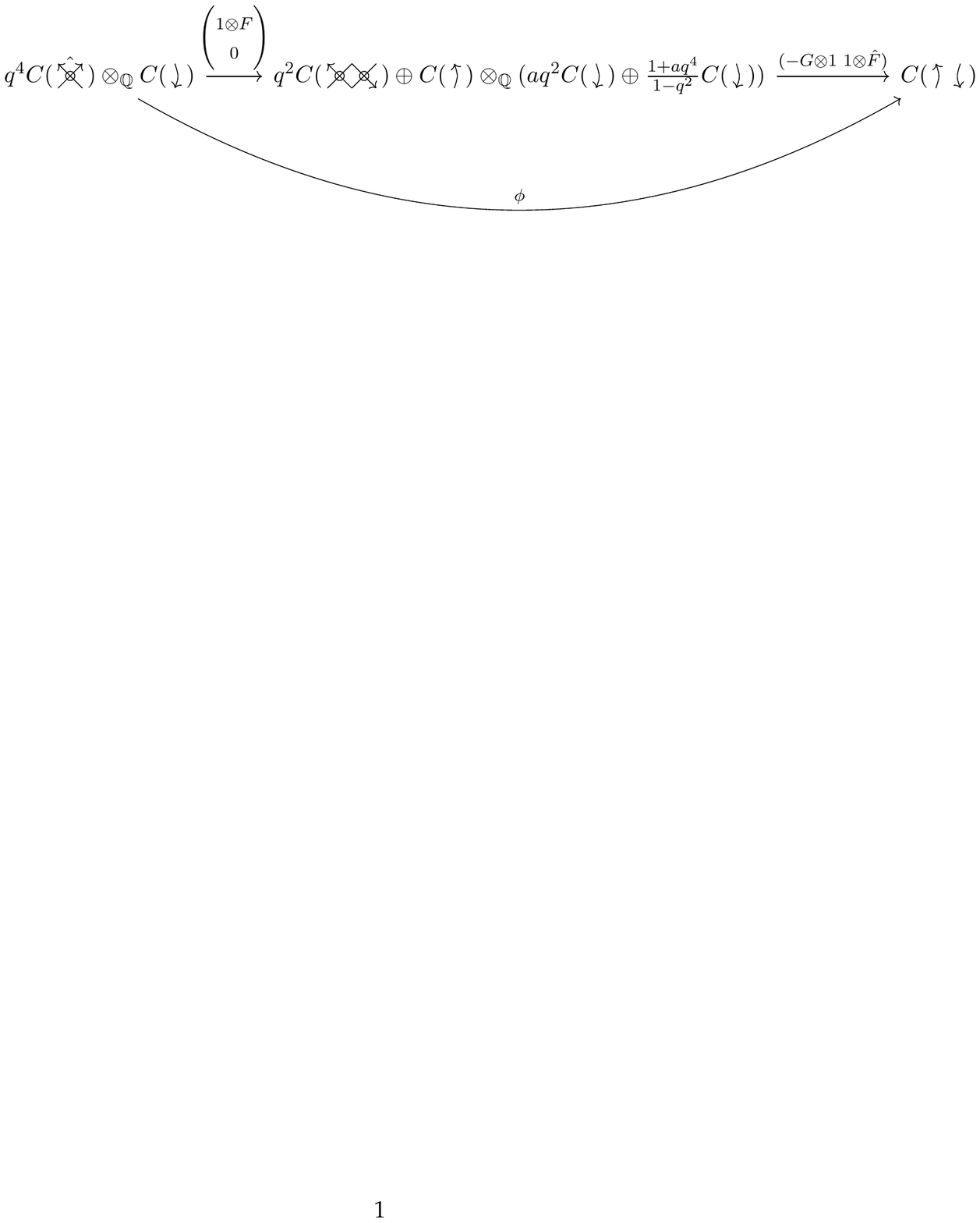}
\end{minipage}
\end{center}

Now we apply Proposition \ref{prop:GaussianElim} to the above twisted
complex to get the following twisted complex

\begin{center}
\begin{minipage}{122mm}
\includegraphics{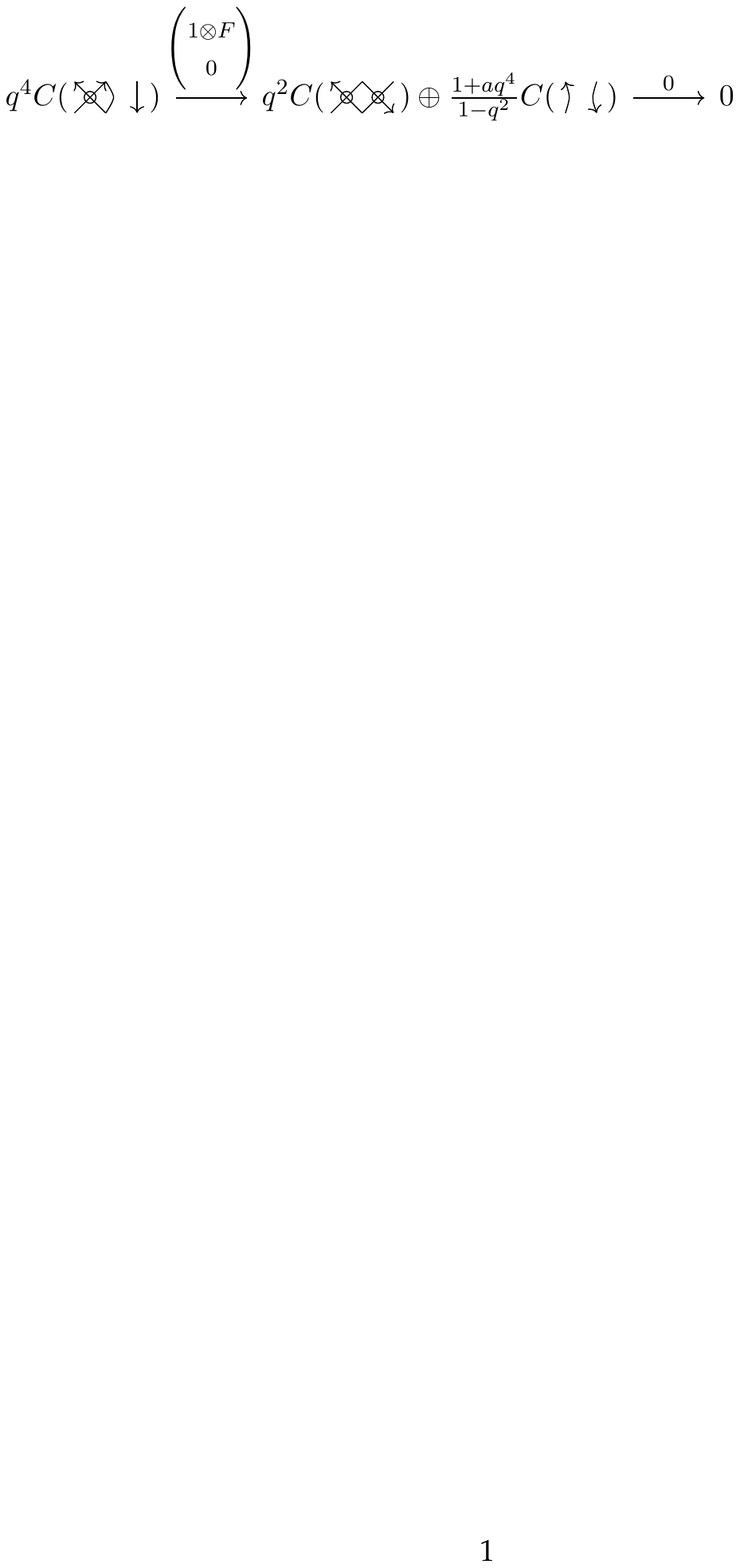}
\end{minipage}
\end{center}

Note that we now omit $\phi$ as it is clear it must be zero. The
convolution of the above twisted complex is homotopy equivalent to
$C(\viredge)\oplus\dfrac{q^{2}+aq^{6}}{1-q^{2}}C(\arcsud)$ as desired.
\end{proof}
\bibliographystyle{plain}
\bibliography{bib}

\begin{thebibliography}{10}

\bibitem{AbRoz14}
M.~{Abel} and L.~{Rozansky}.
\newblock {Virtual crossings and a filtration of the triply graded homology of
  a link diagram}.
\newblock {\em ArXiv e-prints}, October 2014.

\bibitem{APS04}
M.~M. Asaeda, J.~H. Przytycki, and A.~S. Sikora.
\newblock Categorification of the {K}auffman bracket skein module of
  {$I$}-bundles over surfaces.
\newblock {\em Algebr. Geom. Topol.}, 4:1177--1210, 2004.

\bibitem{AF}
B.~Audoux and T.~Fiedler.
\newblock A {J}ones polynomial for braid-like isotopies of oriented links and
  its categorification.
\newblock {\em Algebr. Geom. Topol.}, 5:1535--1553 (electronic), 2005.

\bibitem{AGW15}
Denis Auroux, J.~Elisenda Grigsby, and Stephan~M. Wehrli.
\newblock Sutured {K}hovanov homology, {H}ochschild homology, and the
  {O}zsv\'ath-{S}zab\'o spectral sequence.
\newblock {\em Trans. Amer. Math. Soc.}, 367(10):7103--7131, 2015.

\bibitem{B-N05}
D.~Bar-Natan.
\newblock Khovanov's homology for tangles and cobordisms.
\newblock {\em Geom. Topol.}, 9:1443--1499, 2005.

\bibitem{HOMFLY}
P.~Freyd, D.~Yetter, J.~Hoste, W.~B.~R. Lickorish, K.~Millett, and A.~Ocneanu.
\newblock A new polynomial invariant of knots and links.
\newblock {\em Bull. Amer. Math. Soc. (N.S.)}, 12(2):239--246, 1985.

\bibitem{HP89}
J.~Hoste and J.~H. Przytycki.
\newblock An invariant of dichromatic links.
\newblock {\em Proc. Amer. Math. Soc.}, 105:1003--1007, 1989.

\bibitem{Ka07}
S.~Kamada.
\newblock Braid presentation of virtual knots and welded knots.
\newblock {\em Osaka J. Math.}

\bibitem{Ka99}
L.~H. Kauffman.
\newblock Virtual knot theory.
\newblock {\em European J. Comb.}, 20:663--690, 1999.

\bibitem{Kh07}
M.~Khovanov.
\newblock Triply-graded link homology and {H}ochschild homology of {S}oergel
  bimodules.
\newblock {\em Internat. J. Math.}, 18(8):869--885, 2007.

\bibitem{KR07}
M.~Khovanov and L.~Rozansky.
\newblock Virtual crossings, convolutions and a categorification of the {SO(2N)
  K}auffman polynomial.
\newblock {\em Journal of Gokova Geometry Topology}, 1:116--214, 2007.

\bibitem{KR08b}
M.~Khovanov and L.~Rozansky.
\newblock Matrix factorizations and link homology. {II}.
\newblock {\em Geom. Topol.}, 12(3):1387--1425, 2008.

\bibitem{MOY}
H.~Murakami, T.~Ohtsuki, and S.~Yamada.
\newblock Homfly polynomial via an invariant of colored plane graphs.
\newblock {\em Enseign. Math. (2)}, 44(3-4):325--360, 1998.

\bibitem{PT}
J.~Przytycki and P.~Traczyk.
\newblock Conway algebras and skein equivalence of links.
\newblock {\em Proc. Amer. Math. Soc.}, 100:744--748, 1987.

\bibitem{QR15}
H.~{Queffelec} and D.~E.~V. {Rose}.
\newblock {Sutured annular Khovanov-Rozansky homology}.
\newblock {\em ArXiv e-prints}, June 2015.

\bibitem{Ras06}
J.~Rasmussen.
\newblock Some differentials on {K}hovanov-{R}ozansky homology.
\newblock {\em Geom. Topol.}, 19(6):3031--3104, 2015.

\bibitem{Wei94}
C.~A. Weibel.
\newblock {\em An Introduction to Homological Algebra}, volume~38 of {\em
  Cambridge Studies in Advanced Mathematics}.
\newblock Cambridge University Press, Cambridge, 1994.

\end{thebibliography}

\end{document}